\newcommand{\R}{\mathbb{R}}
\newcommand{\C}{\mathbb{C}}
\newcommand{\D}{\Diag}
\newcommand{\F}{\mathcal{F}}
\newcommand{\G}{\mathcal{G}}
\newcommand{\Q}{\mathbb{Q}}
\newcommand{\etr}{\operatorname{etr}}
\newcommand{\erfc}{\operatorname{erfc}}
\newcommand{\LP}{\operatorname{L}}
\newcommand{\sinc}{\operatorname{sinc}}
\declaretheorem{theorem}
\declaretheorem[sibling=theorem]{lemma}
\declaretheorem[sibling=theorem]{definition}
\declaretheorem[sibling=theorem]{proposition}
\declaretheorem[sibling=theorem]{corollary}
\declaretheorem[sibling=theorem]{conjecture}
\declaretheorem[sibling=theorem]{remark}
\newcommand{\TV}{\operatorname{TV}}
\renewcommand{\Re}{\operatorname{Re}}
\newcommand{\Image}{\operatorname{Im}}
\newcommand{\Id}{\operatorname{Id}}
\newcommand{\diag}{\operatorname{diag}}
\newcommand{\tr}{\operatorname{tr}}
\newcommand{\op}{\operatorname{op}}
\newcommand{\Fr}{\operatorname{F}}
\renewcommand{\Re}{\operatorname{Re}}
\renewcommand{\Im}{\operatorname{Im}}
\newcommand{\Exp}{\mathbb{E}}
\newcommand{\E}{\mathbb{E}}
\newcommand{\prob}{\mathbb{P}}
\renewcommand{\Pr}{\prob}
\DeclareDocumentCommand \one { o }
{%
\IfNoValueTF {#1}
{\mathbf{1}  }
{\mathbf{1}\{{#1}\} }%
}
\newcommand{\Sym}{\mathscr{M}}
\newcommand{\Diag}{\mathscr{D}}
\newcommand{\Offdiag}{\mathscr{E}}
\newcommand{\Binomial}{\operatorname{Binom}}
\newcommand{\lawequals}{\overset{\mathscr{L}}{=}}
\DeclareDocumentCommand{\Prto} {o} {
\IfNoValueTF {#1}
 {\overset{\Pr}{\longrightarrow}}
 { \xrightarrow[ #1 \to \infty]{\Pr }}
}
\DeclareDocumentCommand{\Asto} {o} {
\IfNoValueTF {#1}
 {\overset{\operatorname{a.s.}}{\longrightarrow}}
 {
 \xrightarrow[ #1 \to \infty]{\operatorname{a.s.} }
 }
}
\DeclareDocumentCommand{\Mgfto} {o} {
\IfNoValueTF {#1}
{\overset{\operatorname{mgf}}{\longrightarrow}}
{ \xrightarrow[ #1 \to \infty]{\operatorname{mgf} }}
}
\DeclareDocumentCommand{\Wkto} {o} {
\IfNoValueTF {#1}
 {\overset{(d)}{\longrightarrow}}
 { \xrightarrow[ #1 \to \infty]{(d) }}
}
\DeclareDocumentCommand \LPto { O{1} }
{\overset{\operatorname{\LP^{#1}}}{\longrightarrow}}
\title{Random geometric graphs and the spherical Wishart matrix}
\author{Elliot Paquette and Andrew Vander Werf}
\date{\today}
\begin{document}

\maketitle
\abstract{
  We consider the random geometric graph on $n$ vertices drawn uniformly from a $d$--dimensional sphere.  We focus on the sparse regime, when the expected degree is constant independent of $d$ and $n$.  We show that, when $d$ is larger than $n$ by logarithmic factors, this graph is comparable to the Erd\H{o}s--R\'enyi random graph of the same edge density in the \emph{inclusion divergence} between the graph laws.  This divergence functions in certain ways like a relaxation of the total variation distance, but is strong enough to distinguish Erd\H{o}s--R\'enyi graphs of different densities with a higher resolution than the total variation distance. To do the analysis, we derive some exact statistics of the \emph{spherical Wishart matrix}, the Gram matrix of $n$ independent uniformly random $d$--dimensional spherical vectors.  In particular we give expressions for the characteristic function of the spherical Wishart matrix which are well--approximated using steepest descent. 
}

\section{Introduction}
The \emph{random geometric graph} is defined by taking $n$ independent and identically distributed points in a metric space and connecting them if and only if their distance is sufficiently small.  Random geometric graphs have been the setting of extensive research, and they have a well--developed general theory (see \cite{penrose}).

We consider the random geometric graph $\G(n,p,d),$ in which $n$ points are sampled from the uniform measure (normalized Haar measure) on the unit sphere $\mathbb{S}^{d-1}$, with a distance threshold $t_{p,d}$ chosen in such a way that the probability that any two points connect is $p$.  Specifically, with $X$ uniformly distributed on the sphere and with $x$ any fixed point on the sphere,  $t_{p,d}\in[-1,1]$ is the unique value such that $\Pr[|x-X|\leq\sqrt{2-2t_{p,d}}]=p$, or equivalently such that $\Pr[\langle x,X\rangle\geq t_{p,d}]=p$.

Many authors in recent years have been interested in the problem of testing the hypothesis that a given graph $G$ has been chosen from the Erd{\H{o}}s--R\'enyi random graph $\G(n,p)$ or from $\G(n,p,d)$, and in particular on the effectiveness of such tests as $d\to\infty$.  The first paper to discuss this problem with $d\to\infty$
is \cite{Devroye}
which shows by an appeal to a central limit theorem that as $d\to\infty$ with $n$ and $p$ fixed, 
\begin{equation}\label{eq:TVtest}
\lim_{d\to\infty}\TV\big(\G(n,p),\G(n,p,d)\big)=0,
\end{equation}
where $\TV(\cdot,\cdot)$ is the total variation distance. In fact, the authors also show that this holds with $n\to\infty$ as long is $d=\omega(n^72^{n\choose 2})$.  It was also shown that, for $d$ which is only poly--logarithmic in $n,$ the clique number of $\G(n,p,d)$ matches what is seen in Erd{\H{o}}s--R\'enyi.  As the total variation distance can only contract when passing to statistics of the random graphs, \eqref{eq:TVtest} shows that in the setting $d\to \infty$ with $n,p$ held fixed, there is no statistic that can distinguish the two random graphs.  

The result \cite{Devroye} was greatly improved in \cite{bubeck} wherein the authors prove the following:
\begin{theorem}[{\cite{bubeck}}]
\begin{enumerate}[(a)]
\item 
Let $p\in(0,1)$ be fixed and suppose that $d = o(n^3)$. Then
$$\lim_{d\to\infty}\TV\big(\G(n,p),\G(n,p,d)\big)=1.$$
\item
If $d = \omega(n^3)$, we have
$$\lim_{d\to\infty}\sup_{p\in[0,1]}\TV\big(\G(n,p),\G(n,p,d)\big)=0.$$
\item
Let $c>0$ be fixed and suppose that $d = o(\log^3n)$. Then
$$\lim_{d\to\infty}\TV\big(\G(n,c/n),\G(n,c/n,d)\big)=1.$$
\end{enumerate}
\end{theorem}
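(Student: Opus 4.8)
The plan is to prove the upper bound (b) by an information-theoretic reduction to a comparison of random matrices, and the two lower bounds (a) and (c) by exhibiting a single explicit test statistic---the signed triangle count---whose distributions under the two models separate exactly at the claimed thresholds.

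For (b), the first step is to observe that both laws are pushforwards of a symmetric random matrix under a common measurable map: $\G(n,p,d)$ is the thresholding of the off-diagonal entries of the spherical Gram matrix $M = (\langle X_i, X_j\rangle)_{i,j}$ at $t_{p,d}$, while $\G(n,p)$ is the thresholding of the off-diagonal entries of a symmetric matrix with i.i.d.\ $\Normal(0,1)$ entries above the diagonal at the matched quantile $\Phi^{-1}(1-p)$. Since total variation contracts under pushforward, it is enough to bound $\TV$ between the (off-diagonal part of the) rescaled Gram matrix $\sqrt d\, M$ and a Wigner-type matrix with i.i.d.\ Gaussian off-diagonal entries. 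I would get this from the explicit Wishart density: for $W = \sum_{i=1}^d g_i g_i^{\mathsf T}$ with $g_i$ i.i.d.\ $\Normal(0, I_n)$ and $d \ge n$, $W$ has density proportional to $(\det W)^{(d-n-1)/2}\etr(-W/2)$ on positive-definite matrices; writing $W = dI + \sqrt d\, H$ and Taylor-expanding $\log\det(dI + \sqrt d H) = n\log d + \tr\log(I + H/\sqrt d)$, the quadratic term $-\tfrac14\tr(H^2)$ produces a Gaussian (Wigner) law, the linear term is of lower order, and the leading correction is the cubic term $\tfrac{1}{6\sqrt d}\tr(H^3)$, which under the Gaussian reference has mean $0$ and fluctuations of order $n^{3/2}/\sqrt d$, with higher-order terms smaller. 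Hence the $\chi^2$-divergence, and so $\TV$, between the rescaled Wishart matrix and the Wigner law tends to $0$ precisely when $d = \omega(n^3)$. Passing from $W$ to $\sqrt d\, M$ (Gaussian to spherical vectors) is an a.s.-bounded change of variables that costs $o(1)$ once $d = \omega(n^3)$, using concentration of $\|g_i\|^2/d$; the supremum over $p$ is essentially free since the comparison happens before thresholding, with the extreme ranges of $p$ (where $t_{p,d}$ is near $\pm1$, and where a constant-size shift of the scaled threshold could matter) dispatched by the symmetry $p \leftrightarrow 1-p$ together with crude direct estimates.

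For (a) and (c), take the signed triangle count $T = \sum_{i<j<k}(A_{ij}-p)(A_{jk}-p)(A_{ki}-p)$, where $A$ is the adjacency matrix. Under $\G(n,p)$ the edges are independent, so $\E T = 0$, and since two triangles that share an edge or a vertex still contribute $0$ to the second moment by independence, $\Var T = \binom{n}{3}\bigl(p(1-p)\bigr)^3$ exactly. Under $\G(n,p,d)$ a short computation gives $\E T = \binom{n}{3}\, p^2 (q_{p,d} - p)$, where $q_{p,d} = \Pr[\langle X_1,X_2\rangle \ge t_{p,d}\mid \langle X_1,X_3\rangle \ge t_{p,d},\ \langle X_2,X_3\rangle \ge t_{p,d}]$ is the geometric transitivity probability (using that ``cherries'' still have the independent probability $p^2$), and one checks $\Var T = O(n^3)$ in the dense case and $O(1)$ in the sparse case. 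For (a), with $p$ fixed, the three inner products are jointly approximately Gaussian with pairwise correlation of order $d^{-1/2}$, whence $q_{p,d} - p \asymp d^{-1/2}$ and the mean gap $\E_{\G(n,p,d)}T - \E_{\G(n,p)}T \asymp n^3 d^{-1/2}$ exceeds the standard deviations $\asymp n^{3/2}$ precisely when $d = o(n^3)$. For (c), with $p = c/n$ and $d = o(\log^3 n)$, one has $t_{p,d} \approx \sqrt{2\log n /d}$ in the range $d = \omega(\log n)$ (and $t_{p,d}$ even closer to $1$, with a larger enhancement, when $d = o(\log n)$, which only helps); conditioning on the two rare events forces $X_1, X_2$ to lie near the $X_3$-direction, and expanding $\langle X_1, X_2\rangle \approx t_{p,d}^2 + (1-t_{p,d}^2)\langle U_1, U_2\rangle$ with $U_1, U_2$ uniform on $\mathbb S^{d-2}$ gives $q_{p,d} - p \asymp p\cdot d\, t_{p,d}^3 \asymp p\,(\log n)^{3/2}/\sqrt d$, so the mean gap is $\asymp (\log n)^{3/2}/\sqrt d$ against standard deviations $\asymp 1$; these separate precisely when $d = o(\log^3 n)$. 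In all cases a Chebyshev-based threshold test on $T$ has both error probabilities $\to 0$, giving $\TV \to 1$.

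I expect the main obstacles to be, for (b), obtaining enough tail control on $\tr(H^3)$ (and the higher terms) under the Gaussian reference to turn the heuristic $n^{3/2}/\sqrt d$ bound into a rigorous $\chi^2$ (or Kullback--Leibler) estimate, uniformly in $p$ and stable under the spherical-vs-Gaussian renormalization and under the $o(1)$ mismatch between the scaled threshold $\sqrt d\, t_{p,d}$ and $\Phi^{-1}(1-p)$; and, for (a) and especially (c), the variance bound $\Var_{\G(n,p,d)}T = O(1)$ in the sparse regime, since the geometric correlations make pairs of triangles sharing an edge contribute nonzero cross terms, and one must verify these stay negligible across the whole range $d = o(\log^3 n)$, which straddles the crossover at $d \asymp \log n$ where the behaviour of $t_{p,d}$ changes qualitatively.
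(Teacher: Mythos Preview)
The paper does not prove this theorem; it is stated in the introduction as a known result from \cite{bubeck} (Bubeck--Ding--Eldan--R\'acz), cited to set context for the paper's own contributions on the inclusion divergence. There is therefore no proof in the present paper to compare your proposal against.

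That said, your sketch follows closely the strategy of the original \cite{bubeck} paper. For (b), they too reduce to a comparison of the Wishart and GOE ensembles via the explicit Wishart density and control an information divergence by expanding $\log\det$; for (a) and (c), they too use the signed triangle statistic and establish the mean--variance separation you describe. Your formula $\E_{\G(n,p,d)} T = \binom{n}{3}\, p^2(q_{p,d}-p)$ is correct (the cherry probability is exactly $p^2$ by rotational invariance), and your heuristics for $q_{p,d}-p$ in both regimes match theirs up to constants. The obstacles you flag---the variance of $T$ under the geometric model, where overlapping triangles contribute nonzero cross terms, and the uniform-in-$p$ control needed for (b)---are exactly where the technical work lies in \cite{bubeck}. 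One point to be careful with in your write-up of (b): the \cite{bubeck} argument compares the full Wishart matrix (including diagonal) to an appropriate Gaussian; your passage from Gaussian to spherical vectors and your handling of the quantile mismatch $\sqrt d\,t_{p,d}$ versus $\Phi^{-1}(1-p)$ require more than a one-line appeal to concentration, since the spherical Gram matrix has a deterministic diagonal and the relevant comparison is of laws, not of typical realizations.
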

Parts (a) and (b) together fully answer the problem of total variation distinguishability in the dense regime in which $p$ is fixed. Part (c) gives sufficient conditions on $d$ to ensure that the graphs can be distinguished in the sparse regime in which $p=\Theta(1/n)$, but it does not give sufficient conditions to ensure that the graphs are indistinguishable beyond the result in (b). They \cite{bubeck} conjecture, however, that $d=\omega(\log^3n)$ is the correct sufficient condition to ensure that the graphs are indistinguishable in the sparse regime.  This remains an open problem: the state of the art, due to \cite{Phase}, shows that when $d=\omega( n^{3/2}\log^{7/2}n)$, the graphs are indistinguishable in the sparse regime.

In this paper, we will consider a different comparison between $\G(n,p,d)$ and $\G(n,p)$.
Let $\mathbb{G}_{n}$ be the set of undirected graphs on $n$ vertices, and define the \emph{inclusion divergence}
\[
  \operatorname{I-Div}\big(\G(n,p,d) \vert \vert \G(n,p)\big)
  :=\min_{A \subset \mathbb{G}_n} 
  \biggl\{
  \max_{ G \in A} 
  \left|\frac{\Pr[\G(n,p,d)\supseteq G]}{\Pr[\G(n,p)\supseteq G]}-1\right|
  +\Pr[ \G(n,p) \in A^c]
  \biggr\}.
\]
This can be considered as a type of divergence measure between the laws, in that, should the inclusion divergence be $0,$ the laws are equal.  

Note that in the sparse regime $p=\Theta(1/n),$ $\G(n,p)$ is concentrated on a set of graphs whose individual inclusion probabilities are all $e^{-\Theta(n \log n)}.$  Hence for the inclusion divergence to be small, it must be that the law of $\G(n,p,d)$ matches $\G(n,p)$ with very high accuracy for a class of rare events.  In particular, there is no direct comparison possible between the inclusion divergence and total variation distance on spaces with arbitrarily small atoms. Nonetheless, it can be seen that two Erd\H{o}s--R\'enyi graphs are close in inclusion divergence only if they are close in total variation distance (see Theorem \ref{thm:idiver}).

For the inclusion divergence, we show:
\begin{theorem}\label{thm:idivergence}
  Suppose $p \sim c/n$ for some $c > 0$.  Then, if $d=\omega(n \log^5 n)$, 
  \[
    \operatorname{I-Div}\big(\G(n,p,d) \vert \vert \G(n,p)\big) \to 0.
  \]
\end{theorem}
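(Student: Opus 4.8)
The plan is to exhibit a single set $A \subseteq \mathbb{G}_n$ attaining (up to $o(1)$) the minimum defining $\operatorname{I-Div}$, and to control both of its contributions. Since $\Pr[\G(n,p) \supseteq G] = p^{|E(G)|}$ exactly, it suffices to produce $A$ with
\[
  \Pr[\G(n,p) \in A^c] \to 0
  \qquad\text{and}\qquad
  \max_{G \in A}\left|\frac{\Pr[\G(n,p,d) \supseteq G]}{p^{|E(G)|}} - 1\right| \to 0 .
\]
I would take $A$ to be the set of graphs $G$ on $[n]$ with (a) $|E(G)| \le Cn$ for a suitable constant $C = C(c)$, (b) maximum degree $\Delta(G) \le \log n$, and (c) at most $\log n$ triangles (and, more generally, $O(\mathrm{polylog}\, n)$ copies of each fixed subgraph). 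That $\G(n,c/n)$ satisfies (a) by a Chernoff bound, (b) by a union bound over high-degree vertices, and (c) by a first-moment bound — the relevant subgraph counts being $O(1)$ in expectation in the sparse regime — is standard, so $\Pr[\G(n,p) \in A^c] \to 0$.

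For the second bound, write $\Pr[\G(n,p,d) \supseteq G] = \Pr[W_e \ge t \text{ for all } e \in E(G)]$, where $W$ is the spherical Wishart matrix, $t = t_{p,d} \asymp \sqrt{(\log n)/d}$, and for an edge $e = \{i,j\}$ we abbreviate $W_e = W_{ij}$. Two preliminary observations. First, conditioning on one endpoint of a degree-one vertex at a time shows that if $G$ is a forest then $\Pr[W_e \ge t \text{ for all } e] = p^{|E(G)|}$ \emph{exactly}; peeling to the $2$-core then reduces matters to $G$ of minimum degree $\ge 2$ (although the estimate below applies to any $G \in A$). Second, since each $W_e \in [-1,1]$ the characteristic function $\Psi_W(\xi) := \E\exp\!\big(i \sum_{e} \xi_e W_e\big)$ is entire in $\xi = (\xi_e)_{e \in E(G)}$, and Fourier inversion gives, with $m = |E(G)|$,
\[
  \Pr[\G(n,p,d) \supseteq G] = \frac{1}{(2\pi)^m} \int_{\R^m} \Psi_W(\xi)\, \prod_{e \in E(G)} \frac{e^{-i\xi_e t} - e^{-i\xi_e}}{i\xi_e}\, d\xi ,
\]
which I would evaluate by steepest descent, shifting each contour $\xi_e \mapsto \xi_e - i\eta$ to the common saddle $\eta \approx dt$ of the one-dimensional factors.

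The key input is the paper's exact formula for $\Psi_W$ and the accompanying steepest-descent expansion; rewriting the latter in the cumulant form $\log \Psi_W(\xi) = \sum_{e} \log \psi(\xi_e) + R(\xi)$, where $\psi$ is the characteristic function of one spherical inner product, the interaction remainder $R$ has no single-edge term: to leading order $R(\xi)$ is a sum over closed walks in $G$ that traverse at least two distinct edges, a $k$-step walk carrying weight of order $d^{-(k-1)}$ (the column normalization intrinsic to the sphere contributes a term of comparable size). Feeding the product part $\prod_e \psi(\xi_e)$ into the integral factorizes it into $m$ copies of the one-dimensional integral that equals $\Pr[W_{12} \ge t] = p$, which produces the main term $p^{m}$. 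The correction is then controlled by the size of $R$ along the shifted contour: at the saddle its dominant part comes from $2$-edge closed walks (paths of length two), of order $d^{-3}(dt)^4 \sum_v \binom{d_v}{2} \asymp \tfrac{(\log n)^2}{d}\sum_v\binom{d_v}{2}$, which by (a)--(b) is $O(n(\log n)^3/d)$; bounding the number of closed $k$-walks in $G$ by $\Delta(G)^{k-2}\cdot 2|E(G)|$ (and using (c) for $k=3$) makes the longer walks a geometric series of ratio $o(1)$, so the whole correction is $O(n(\log n)^{4}/d)$. Since $d = \omega(n\log^5 n)$ this is $o(1)$ uniformly over $A$, so $\Pr[\G(n,p,d) \supseteq G] = p^{m}(1+o(1))$ and $\operatorname{I-Div}(\G(n,p,d)\,\|\,\G(n,p)) \to 0$.

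I expect the main obstacle to be making this $m = \Theta(n)$-dimensional steepest-descent estimate rigorous: one must show the contour integral genuinely localizes near the product saddle despite the growing dimension, and bound the interaction remainder $R$ — together with the error from replacing $\Psi_W$ by its steepest-descent approximation — \emph{uniformly along the deformed contour}, not merely at the saddle, ruling out any conspiracy among the $\Theta(n)$ individually tiny interaction contributions and controlling the tails where some $\xi_e$ are large. A cleaner way to organize the underlying combinatorics (though it still needs $\Psi_W$ for the quantitative bounds) is to expand $\prod_e\big(p^{-1}\one[W_e \ge t]\big) = \sum_{S \subseteq E(G)} \prod_{e \in S} Y_e$ with $Y_e = p^{-1}\one[W_e \ge t] - 1$, observe that $\E\prod_{e \in S} Y_e = 0$ whenever $S$ is a forest (by inclusion–exclusion together with exactness on forests), and estimate the remaining sum over those $S$ that contain a cycle.
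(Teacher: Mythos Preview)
Your plan is essentially the paper's: the set $A$ you describe is exactly the one the authors use, and the analytic strategy (Fourier inversion for $\Pr[\G(n,p,d)\supseteq G]$, contour shift to a saddle of size $\asymp dt_{p,d}$, expansion of the log of the characteristic function as a sum over traces of powers of $\Theta$) is precisely how Theorem~\ref{mainmain} is proved in Sections~\ref{sec:wishart}--\ref{sec:bounds}. You have correctly identified both the choice of $A$ and the main obstacle.

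What you have written, however, is a program rather than a proof: the whole difficulty lies in the part you flag as ``the main obstacle,'' and the paper's technical contribution is exactly a device for carrying it out. The implementation differs from your sketch in one important way. Rather than expanding $\log\Psi_W$ directly into a product part plus a walk remainder, the authors introduce an explicit Gaussian comparison: they set $p_0 = 1-\Phi(\sqrt{d}\,t_{p,d})$, take $\varphi_M(\Theta)=\etr(-\Theta^2/(4d))$, and construct (Proposition~\ref{odeprop}) a steepest--descent contour $\gamma$ on which the Gaussian integrand is \emph{nonnegative}, so that the product part equals $p_0^{\sigma_G}$ exactly and the whole integral becomes an honest expectation. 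The correction is then $\E\bigl[\,|\varphi_V/\varphi_M - 1|\one_K\bigr]$ against that probability measure, and Lemmas~\ref{bound1}--\ref{bound3} bound it via the representation $\varphi_V(\Theta)=\E[\det(\Id-\tfrac{i}{d}\Theta\beta(U)^{-1})^{-d/2}\det\beta'(U)]$ of Lemma~\ref{expectationdef}. Your ``walk'' expansion reappears there as the Taylor expansion of $\log\det(\Id-\tfrac{i}{d}\Theta\beta(U)^{-1})$; the cubic trace gives the $\tau_G$ term and the quartic gives the $\delta_G^2$ term. One further log factor beyond your back--of--the--envelope $O(n\log^4 n/d)$ enters through the imaginary part $L_i$ (Lemma~\ref{bound3}), where a $\mu_G\log\mu_G$ rather than $\sigma_G$ appears; this is why the hypothesis is $d=\omega(n\log^5 n)$. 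Finally, one must pass from $p_0$ back to $p$ via Lemma~\ref{lem:pp0}, which is harmless here but would fail for $d=o(n\log^2 n)$.

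Your forest observation and the inclusion--exclusion reformulation at the end are correct and pleasant, but they do not appear in the paper and do not by themselves circumvent the need for uniform control of $\Psi_W$ along the deformed contour.
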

\noindent We expect that the the same holds with total variation distance, but this is beyond our method.  Based on our approximation, it is natural to assume that the condition $d= \omega(n \log^5 n)$ is tight up to logarithmic factors, and that the statement in the theorem is already false when $d = \Theta(n \log^2 n)$ (see Conjecture \ref{conj:lower}).

\paragraph{Non--divergence formulation.} The proof of Theorem \ref{thm:idivergence} goes by a direct analysis of the inclusion probability of graphs.  Indeed, we identify a collection of graphs $A \subset \mathbb{G}_n$ for which we show:
\[
\lim_{d\to\infty}
\max_{G \in A} \left|\frac{\Pr[\G(n,p,d)\supseteq G]}{\Pr[\G(n,p)\supseteq G]}-1\right|=0.
\]
This class $A$ will be defined in terms of comparisons of certain graph statistics.

Throughout this paper we will use the following notation to reference certain statistics for the graph $G$ being considered:
\begin{multicols}{2}
\begin{enumerate}
  \item $\mu_G$ is the number of nonisolated vertices \emph{of $G$}. 
  \item $\sigma_G$ is the number of edges in $G$.
  \item $\delta_G$ is the maximum degree of $G$.
  \item $\tau_G$ is the number of triangles in $G$.
\end{enumerate}
\end{multicols}
\noindent It should be assumed that all of these statistics, as well as $p$, may vary with $d$ as $d\to\infty$.  Note that for the inclusion probability $\Pr[\G(n,p)\supseteq G]$, an isolated vertex may be removed from the graph $G$ without affecting the probability, and hence where convenient, we will assume that $G$ does not contain any isolated vertices.

Using these statistics we show:
\begin{theorem}\label{mainmain}
  Suppose that $d$ and $n$ tend to infinity and that $\log d=O(\log p^{-1})$.  Set $p_0 :=  1-\Phi(\sqrt{d}t_{p,d})$ where $\Phi$ is the standard normal distribution function.
Let $A=A_n$ be a class of graphs for which 
\[
  d=\omega(\sigma_G \delta_G^2 \log^2(\nicefrac{1}{p})+\mu_G\log \mu_G \delta_G^2 \log^2(\nicefrac{1}{p})+\tau_G^2 \sigma_G \log^3(\nicefrac{1}{p})),
  \quad 
  d \log d=\Omega(\mu_G \sigma_G^{-1} \delta_G^4 \log^2(\nicefrac{1}{p})). 
\]
Then
\[
\lim_{d\to\infty}
\max_{G \in A} \left|\frac{\Pr[\G(n,p,d)\supseteq G]}{\Pr[\G(n,p_0)\supseteq G]}-1\right|=0.
\]
\end{theorem}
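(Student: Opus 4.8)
The plan is to prove the stronger quantitative statement that $\Pr[\G(n,p,d)\supseteq G]=p_0^{\sigma_G}(1+o(1))$ with the error term uniform over $G\in A$; since isolated vertices may be discarded and $\Pr[\G(n,p_0)\supseteq G]=p_0^{\sigma_G}$, this is equivalent to the theorem. First I would realize the geometric graph through $X_v=G_v/\|G_v\|_2$ with $G_v\sim\Normal(0,I_d)$ i.i.d., so that $\{\G(n,p,d)\supseteq G\}=\{\,\langle X_u,X_v\rangle\ge t_{p,d}\text{ for all }\{u,v\}\in E(G)\,\}$ and only the joint law of the edge entries $W_e:=\langle X_u,X_v\rangle$ of the spherical Wishart matrix $W$ on $V(G)$ matters. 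The hypotheses force $d\gg\mu_G$, so $W$ has a genuine density on correlation matrices; note also $\Var(W_e)=1/d$ exactly.

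Next I would invoke the exact characteristic-function formula for the spherical Wishart matrix derived earlier in the paper (involving a determinantal factor and an integral over the normalizations $\|G_v\|_2$), specialized by setting the frequency variables to zero on all non-edges, to obtain
\[
\Pr[\G(n,p,d)\supseteq G]=\int_{[t_{p,d},\infty)^{E(G)}}\frac{1}{(2\pi)^{\sigma_G}}\int_{\R^{E(G)}}\phi_G(\boldsymbol\theta)\,e^{-i\langle\boldsymbol\theta,\mathbf w\rangle}\,d\boldsymbol\theta\,d\mathbf w,\qquad \phi_G(\boldsymbol\theta):=\E\exp\Bigl(i\sum_{e\in E(G)}\theta_e W_e\Bigr),
\]
a Gaussian damping in $\boldsymbol\theta$ justifying the interchange of integrals and removed at the end. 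The key observation is that the Erd\H{o}s--R\'enyi value $p_0^{\sigma_G}$ is \emph{exactly} what this formula returns when $\phi_G$ is replaced by the product Gaussian $\phi_G^{(0)}(\boldsymbol\theta)=\exp(-\tfrac1{2d}\sum_e\theta_e^2)$, because then the $W_e$ become i.i.d.\ $\Normal(0,1/d)$ with one-sided tail $1-\Phi(\sqrt d\,t_{p,d})=p_0$ at $t_{p,d}$. Hence the task reduces to controlling the effect of replacing $\phi_G^{(0)}$ by the true $\phi_G$.

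The heart of the proof is a steepest-descent estimate of the $\boldsymbol\theta$-integral, deforming the contour through the saddle, at which $|\theta_e|\asymp\sqrt d\,|t_{p,d}|\asymp\sqrt{\log(1/p)}$ (using $t_{p,d}^2\asymp\log(1/p)/d$ and $\log d=O(\log p^{-1})$ to absorb subleading factors). Writing $\phi_G=\phi_G^{(0)}\cdot(1+\Psi_G)$ and Taylor-expanding, $\Psi_G$ reorganizes as a sum over small subgraphs $H\subseteq G$, a subgraph with $k$ edges contributing at order $(\log(1/p)/d)^k$ times a combinatorial multiplicity. The one-edge terms only reproduce the single-edge discrepancy $p/p_0-1=O(\log^2(1/p)/d)$, which is absorbed; the genuinely new corrections come from (i) ``cherries'' (pairs of edges sharing a vertex): around a vertex $v$ this scales like $\deg_G(v)^2\log(1/p)/d$, and after summing over $v$, accounting for the second-order saddle term, the fluctuations of the norms $\|G_v\|_2$, and the dependence of $\phi_G$ on $\mu_G$ through its determinantal factor (of exponent $\asymp d-\mu_G$), one obtains the terms $\sigma_G\delta_G^2\log^2(1/p)/d$, $\mu_G\log\mu_G\,\delta_G^2\log^2(1/p)/d$ (the $\log\mu_G$ from a union bound controlling how close non-adjacent vertices can be), and $\mu_G\sigma_G^{-1}\delta_G^4\log^2(1/p)/(d\log d)$; and (ii) triangles, which introduce genuine cyclic dependence into $W$ and contribute $\tau_G^2\sigma_G\log^3(1/p)/d$. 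Under the stated hypotheses each such quantity is $o(1)$ uniformly on $A$, and summing yields the claim.

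The main obstacle is making this steepest-descent estimate rigorous and \emph{uniform over} $A$: one must control $\phi_G(\boldsymbol\theta)$ on the whole deformed contour, not merely near the saddle. The determinantal factor in the exact characteristic function grows where its argument is near-singular, so quantitative decay and boundedness estimates for $\phi_G$ in the bulk and tails of the $\boldsymbol\theta$-integral are required; this is precisely where the conditions $d=\omega(\sigma_G\delta_G^2\log^2(1/p)+\mu_G\log\mu_G\,\delta_G^2\log^2(1/p)+\tau_G^2\sigma_G\log^3(1/p))$ and $d\log d=\Omega(\mu_G\sigma_G^{-1}\delta_G^4\log^2(1/p))$ are consumed, and why $\mu_G$ rather than only $\sigma_G$ enters the bounds. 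A secondary, purely combinatorial obstacle is to show that the full subgraph expansion of $\Psi_G$ is dominated by the cherry and triangle terms, i.e.\ that longer paths, larger cycles, and overlapping cherries are negligible; this relies essentially on the maximum-degree bound $\delta_G$ to prevent blow-up at high-degree vertices.
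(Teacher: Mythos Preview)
Your global strategy matches the paper: write $\Pr[\G(n,p,d)\supseteq G]$ by Fourier inversion against $\varphi_V$ restricted to $\Offdiag^G$, deform each coordinate to a steepest-descent contour $\gamma$ through the saddle at $\Theta_{jk}\asymp -idt_{p,d}$, and compare $\varphi_V$ to the Gaussian $\varphi_M$ on that contour. The paper likewise splits into a tail region $\|X\|_{\Fr}>\sqrt{2C\sigma_G}$ (handled by a crude modulus bound on $\varphi_V$) and a bulk region where one must show $\varphi_V/\varphi_M-1$ is small in an $L^1$ sense against the positive measure $\varphi_M(\gamma^G)\prod_e\frac{e^{-it_{p,d}\gamma_e}}{2\pi i\gamma_e}\gamma_e'$.

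Where you diverge is in how the bulk comparison is executed. You propose to write $\varphi_V/\varphi_M=1+\Psi_G$ and organize $\Psi_G$ as a combinatorial sum over subgraphs $H\subseteq G$ (edges, cherries, triangles, longer paths, \dots). The paper does \emph{not} do this. Instead it performs a \emph{second} steepest descent, now over the diagonal Fourier variables in the exact integral representation of $\varphi_V$, deforming $\Id+i\Diag$ to the contour $\beta(u)=u\cot u+iu$. This yields the identity
\[
\varphi_V(\Theta)=\E\Bigl[\det\bigl(\Id-\tfrac{i}{d}\Theta\beta(U)^{-1}\bigr)^{-d/2}\det\beta'(U)\Bigr],
\]
where $U$ is an explicit random diagonal matrix with i.i.d.\ subgaussian entries of variance $\asymp 1/d$. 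The ratio $R(U,\Theta)$ is then a single log-determinant, and expanding $\log\det$ in traces gives
\[
\log R=\tfrac{i}{2d}\tr(\Theta^2U)+O\Bigl(\tfrac{1}{d}\|\Theta\|_{\Fr}^2U_*^2+\tfrac{\tau_G}{d^2}\Theta_*^3+\tfrac{\delta_G^2}{d^3}\|\Theta\|_{\Fr}^2\Theta_*^2\Bigr).
\]
Cherries enter only through $\tr(\Theta^2U)=\sum_j(\sum_k\Theta_{jk}^2)U_{jj}$ and triangles only through $\tr\Theta^3$; the quartic and higher trace terms are controlled wholesale by $\|\Theta\|_{\op}^2\|\Theta\|_{\Fr}^2/d^3\le\delta_G^2\Theta_*^2\|\Theta\|_{\Fr}^2/d^3$, so no expansion over general $H$ is ever attempted and your ``secondary combinatorial obstacle'' simply does not arise.

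This also affects two of your attributions. The $\mu_G\log\mu_G$ factor does \emph{not} come from ``a union bound controlling how close non-adjacent vertices can be''; it comes from $\E U_*^2\asymp(\log\mu_G)/d$, the maximum of $\mu_G$ subgaussian diagonal fluctuations introduced by the second steepest descent. And the condition $d\log d=\Omega(\mu_G\sigma_G^{-1}\delta_G^4\log^2(1/p))$ is consumed solely in the \emph{tail} estimate (bounding $\log\det(\Id-t_{p,d}Y)^{-d/2}$, where the quartic term $nd t_{p,d}^4\delta_G^4$ must be $O(\sigma_G\log d)$), not in the bulk cherry analysis. Your subgraph-expansion route may be viable, but it is genuinely different and, as you yourself flag, unexecuted; the paper's $U$-representation trades your combinatorial obstacle for the analytic work of constructing $\gamma$ via an ODE so that the Gaussian integrand is pointwise nonnegative along it, which is what makes the $L^1$ comparison go through cleanly.
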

\noindent This theorem implies Theorem \ref{thm:idivergence} on taking $A$ to be the graphs for which for sufficiently large $K(c),$ 
\[
  \mu_G \leq n,
  \quad \sigma_G \leq K(c) n,
  \quad \delta_G \leq \log n,
  \quad\text{and}\quad \tau_G \leq \log n.
\]
All of these properties are easily seen to hold on an event of probability tending to $1$ under $\G(n,p_0)$.  Using Lemma \ref{lem:pp0}, we may use $p_0$ or $p$ interchangeably for the $d$ considered in Theorem \ref{thm:idivergence}.  Note that for $d =o(n \log^2 n)$ this ceases to be true.

Theorem \ref{mainmain} includes some information about the low--dimensional regime as well. \begin{corollary}
  Suppose $\mu_G$ is bounded, and suppose that
$d=\omega(\log^3(\nicefrac{1}{p}))$. 
Then we have
$$\lim_{d\to\infty}\left|\frac{\Pr[\G(n,p,d)\supseteq G]}{\Pr[\G(n,p_0)\supseteq G]}-1\right|=0.$$ 
\end{corollary}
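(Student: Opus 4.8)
The plan is to obtain the corollary directly from Theorem \ref{mainmain}, once we record how its hypotheses collapse when $\mu_G$ is bounded. Since isolated vertices affect neither $\Pr[\G(n,p,d)\supseteq G]$ nor $\Pr[\G(n,p_0)\supseteq G]$, we may assume $G$ has none, so a uniform bound $\mu_G\le C$ forces $|V(G)|\le C$, hence $\sigma_G\le\binom{C}{2}$, $\delta_G\le C-1$, and $\tau_G\le\binom{C}{3}$ --- all $O(1)$. If $\mu_G\le 1$ then $G$ is edgeless and the ratio is identically $1$, so we may also assume $\mu_G\ge 2$, whence $\sigma_G\ge 1$ and $\sigma_G^{-1}=O(1)$ too. (Note the ratio does not depend on $n$: both inclusion probabilities are determined by the $O(1)$ vertices of $G$, so this is really a statement about $d\to\infty$ with $p=p_d$.)

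With all four statistics $O(1)$, the first hypothesis of Theorem \ref{mainmain} reads $d=\omega(\log^{2}(\tfrac{1}{p})+\log^{3}(\tfrac{1}{p}))$, which --- whether $p$ stays bounded away from $0$ (the right side is $O(1)$, so $d\to\infty$ suffices) or $p\to0$ (the right side is $\Theta(\log^{3}(\tfrac{1}{p}))$) --- is exactly the standing hypothesis $d=\omega(\log^{3}(\tfrac{1}{p}))$. The second hypothesis reads $d\log d=\Omega(\log^{2}(\tfrac{1}{p}))$, which follows a fortiori, since $d=\omega(\log^{3}(\tfrac{1}{p}))$ gives $d=\omega(\log^{2}(\tfrac{1}{p}))$ and $d\log d\ge d$ for large $d$. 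So, in the range $\log d=O(\log p^{-1})$ covered by Theorem \ref{mainmain}, the conclusion is immediate.

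The only step I expect to require genuine (if standard) estimates is the hypothesis $\log d=O(\log p^{-1})$ of Theorem \ref{mainmain}, which is not implied by $d=\omega(\log^{3}(\tfrac{1}{p}))$ --- it fails, for instance, whenever $p$ is bounded away from $0$. One clean option is simply to carry $\log d=O(\log p^{-1})$ over as a standing assumption. To obtain the statement exactly as written, I would instead dispatch the complementary range directly: when $\log p^{-1}=o(\log d)$ one has $p\ge d^{-\varepsilon}$ eventually for every $\varepsilon>0$, in particular $p\gg d^{-1/2}$, and $\Pr[\G(n,p,d)\supseteq G]$ is a function of the joint law of the $O(1)$ inner products $\langle X_i,X_j\rangle$ over pairs of vertices of $G$. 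After rescaling by $\sqrt d$, this law is within $O(d^{-1/2})$ of that of independent standard Gaussians --- Berry--Esseen for each marginal, and asymptotic independence of finitely many inner products for the joint structure (precisely the phenomenon the spherical Wishart analysis of this paper refines) --- and the Gaussian probability is exactly $p_0^{\sigma_G}=\Pr[\G(n,p_0)\supseteq G]$; since $p\gg d^{-1/2}$, the ratio tends to $1$. (For $p$ fixed this case is already contained in the central limit argument of \cite{Devroye}.) Combining the two ranges proves the corollary.
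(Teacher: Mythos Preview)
Your approach matches the paper's: the corollary is stated there without proof, as an immediate specialization of Theorem~\ref{mainmain}, and your verification that the two hypotheses collapse to $d=\omega(\log^3(1/p))$ once $\mu_G,\sigma_G,\delta_G,\tau_G$ are all $O(1)$ is exactly what is intended.

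You go further than the paper by noticing that the standing assumption $\log d = O(\log p^{-1})$ of Theorem~\ref{mainmain} is not implied by $d=\omega(\log^3(1/p))$. The paper does not address this; it either implicitly carries that assumption over to the corollary or tacitly regards the complementary regime (where $p \ge d^{-o(1)}$) as already covered by the elementary CLT argument of \cite{Devroye}. Your sketch for that regime is correct in outline, with two small caveats: the comparison you need is $p_0^{\sigma_G} \gg d^{-1/2}$ rather than $p_0 \gg d^{-1/2}$ (both hold here since $\sigma_G=O(1)$ and $\log(1/p)=o(\log d)$), and the claim that the rescaled joint law of the finitely many $\sqrt{d}\,V_{jk}$ is within $O(d^{-1/2})$ of independent standard Gaussians requires a multivariate quantitative CLT (or a local/density comparison), not merely coordinate-wise Berry--Esseen.
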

\noindent Recall the originally conjectured threshold in \cite{bubeck} of $d=\omega( \log^3(\nicefrac{1}{p}))$ for indistinguishability in the sparse regime. 

\subsection{Related Work}

\paragraph{Hypothesis testing.} There has been quite a bit of interest in the past decade or so in hypothesis testing for graphs--see \cite{Devroye}, \cite{bubeck}, \cite{Gao}, \cite{Debarghya}, \cite{BanerjeeM17}, \cite{DBLP:conf/colt/BreslerN18}, \cite{Ryabko}, \cite{information}, \cite{Phase}, \cite{chhor2020sharp}, to name a few. Typically, one treats $\G(n,p)$ as the null hypothesis and some other model as the alternative hypothesis. One popular alternative model which appears in several related works (\cite{BanerjeeM17}, \cite{Gao}, \cite{Debarghya}, \cite{chhor2020sharp}) is the stochastic block model and, more generally, IER graphs, which are like $\G(n,p)$ in that they have independent edges, but which differ from $\G(n,p)$ by not requiring that each edge be equally likely to appear. These kinds of graphs are popular for modeling community structure in graphs, particularly social networks. Testing between such graphs and $\G(n,p)$ therefore typically revolves around checking the graph sample(s) for unusual or unusually frequent structures that are better explained by the presence of community bias than by pure indifference. 

It has also been popular to consider whether or not a graph has geometric biases, that is, whether we can reliably tell that, instead of being sampled from $\G(n,p)$, it was sampled from a random geometric  graph with edge density $p$ but with vertices constrained either by the geometry of the space or by the way they are distributed over the space \cite{bubeck}, \cite{Devroye},
\cite{Debarghya}, \cite{information}, \cite{Phase}.
For telling such geometric graphs apart from $\G(n,p)$, one typically looks at some  statistic of the sampled graph, often one related to the number of triangles in a sample since geometric graphs tend to form triangles more frequently. This has necessitated the development of techniques for evaluating interesting statistics in all kinds of random geometric graph models \cite{Cliques}, \cite{bubeck}, 
\cite{Devroye}, \cite{information}, \cite{annulusgraphs}, \cite{GBM}.

\paragraph{Distinguishability.} The most recent progress in the direction of determining when $\G(n,p,d)$ in particular is indistinguishable from $\G(n,p)$ was made through a combination of geometric reasoning and information inequalities \cite{Phase}. However, before this, the largest leap forward in the convergence of random geometric graphs to $\G(n,p)$ was made by noticing the relationship that often exists between random geometric graphs and some variant of the Wishart matrix \cite{bubeck},  \cite{information}. By coupling this relationship with results such as \cite{Entropic}and \cite{Smooth} which prove various central limit theorems for Wishart matrices, the authors of \cite{bubeck} and  \cite{information} were able to compare random geometric graphs to $\G(n,p)$ by comparing the appropriate Wishart matrix to an appropriately scaled GOE matrix.  Recently, \cite{LiuRacz} extended this analysis to a class of smooth interpolations $\G(n,p,d,q)$ which allow for a smooth, tunable connectivity function that interpolates between $\G(n,p)$ and $\G(n,p,d)$.

In this paper, we too take note of the relationship between $\G(n,p,d)$ and a variant of the Wishart matrix. This variant, which we call the spherical Wishart matrix, has a more direct relationship to $\G(n,p,d)$ than the standard Wishart matrix, but the possible advantages of using this variant have so far been overshadowed by the amount of information already known about the standard Wishart matrix. 

Another, recent work \cite{BBH} considers comparisons between Wishart and GOE in a masked sense.  There, a subset of entries of a Wishart matrix are compared to jointly independent normals in total variation distance.  The support of that subset is called the \emph{mask}, and precise phase transitions for total variation distance are established in terms of the masking graph's properties.  For example, \cite[Theorem 2.5]{BBH} gives a sufficient condition for masked total variation distance, which shows that for $G$ satisfying similar conditions as in Theorem \ref{mainmain}, the masked total variation distance between Wishart and GOE tends to $0.$  While Theorem \ref{mainmain} also concerns the marginals of a (almost) Wishart matrix of a similar mask,  Theorem \ref{mainmain} also addresses rare events for this marginal, which are invisible in a total variation comparison of the graphs $\G(n,p)$ and $\G(n,p,d)$ restricted to $G$.  

\subsection{Layout}
The paper is organized as follows. In Section \ref{sec:idiv} we make some basic estimates about inclusion divergence.  In Section \ref{sec:p0} we go over some basic results about $\G(n,p,d)$ including bounds on $t_{p,d}$ and $p_0$ which will be used frequently.  In Section \ref{sec:wishart} we introduce the spherical Wishart matrix and its role in the bounds to come.  In Section \ref{sec:steepest} we introduce some steepest descent contours over which we compute the graph inclusion probability.  In Section \ref{sec:bounds} we use Fourier analysis as well as the results of the previous sections to bound $\Pr[\G(n,p,d)\supseteq G]$ in terms of $\Pr[\G(n,p_0)\supseteq G]$. 
In the Appendix, we review contour deformation insofar as we will need it for the paper.

\subsection{Acknowledgements} 
The first author is supported by an NSERC Discovery grant.  
The second author was partially supported by an NSF grant DMS-1547357.

\section{Inclusion divergence}
\label{sec:idiv}
We recall the inclusion divergence was defined, for random graphs $\mathcal{X}$ and $\mathcal{Y},$
\[
  \operatorname{I-Div}\big(\mathcal{X} \vert \vert \mathcal{Y} \big)
  =\min_{A \subset \mathbb{G}_n} 
  \biggl\{
  \max_{ G \in A} 
  \left|\frac{\Pr[\mathcal{X} \supseteq G]}{\Pr[\mathcal{Y} \supseteq G]}-1\right|
  +\Pr[ \mathcal{Y} \in A^c]
  \biggr\}.
\]
In this section, we make a simple comparison for how the inclusion divergence compares to the total variation distance.  In particular we show that the inclusion divergence distinguishes Erd\H{o}s--R\'enyi graphs with the greater granularity than the total variation distance throughout the sparse regime.

\begin{theorem}\label{thm:idiver}
  Suppose $p_n$ and $q_n$ are sequences in $[0,\tfrac 12]$ which tend to 0, and suppose that $n^2q_n\to \infty$. Then $\operatorname{I-Div}(\mathcal{G}(n,p_n) \vert \vert\mathcal{G}(n,q_n)) \to 0$ 
if and only if $|p_n-q_n|n^2 \to 0$. \end{theorem}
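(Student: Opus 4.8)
The plan is to use the elementary identity $\Pr[\mathcal G(n,r)\supseteq G]=r^{e(G)}$, where $e(G)$ is the number of edges of $G$ (valid whether or not $G$ has isolated vertices), so that the ratio appearing in the inclusion divergence equals $(p_n/q_n)^{e(G)}$. Since $e\mapsto(p_n/q_n)^e$ is monotone, the truncating families $A$ that matter are in effect the edge-count sublevel sets $\{G:e(G)\le M\}$. Write $\lambda_n:=\binom n2 q_n$, so that $e(\mathcal G(n,q_n))\sim\operatorname{Bin}\big(\binom n2,q_n\big)$ has mean $\lambda_n$, and note $\lambda_n\to\infty$ because $n^2 q_n\to\infty$.

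For the ``if'' direction, assume $|p_n-q_n|n^2\to 0$. Dividing by $q_n n^2\to\infty$ gives $(p_n-q_n)/q_n\to 0$, hence $p_n/q_n\to 1$ and, for $n$ large, $\lambda_n\,|\log(p_n/q_n)|\le 2\lambda_n\,|p_n-q_n|/q_n=2\binom n2|p_n-q_n|\le n^2|p_n-q_n|\to 0$. I would then take $A=\{G:e(G)\le 2\lambda_n\}$. By monotonicity,
\[
  \max_{G\in A}\bigl|(p_n/q_n)^{e(G)}-1\bigr|\le\bigl|(p_n/q_n)^{2\lambda_n}-1\bigr|=\bigl|e^{\pm 2\lambda_n|\log(p_n/q_n)|}-1\bigr|\to 0,
\]
while a Chernoff bound gives $\Pr[\mathcal G(n,q_n)\in A^c]=\Pr\big[\operatorname{Bin}\big(\binom n2,q_n\big)>2\lambda_n\big]\le e^{-\lambda_n/3}\to 0$; adding these two estimates yields $\operatorname{I-Div}\big(\mathcal G(n,p_n)\,\|\,\mathcal G(n,q_n)\big)\to 0$.

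For the ``only if'' direction I would argue the contrapositive. If $|p_n-q_n|n^2\not\to 0$, pass to a subsequence along which $|p_n-q_n|n^2\ge\delta>0$, and show $\operatorname{I-Div}$ stays bounded below there. Fix an arbitrary $A\subset\mathbb G_n$. If $\Pr[\mathcal G(n,q_n)\in A^c]\ge\tfrac12$ the quantity in braces is already $\ge\tfrac12$; otherwise $\Pr[\mathcal G(n,q_n)\in A]>\tfrac12$, and since also $\Pr[e(\mathcal G(n,q_n))\ge\lambda_n/2]\to 1$ by binomial concentration, for $n$ large there is a graph $H\in A$ with $e(H)\ge\lambda_n/2$, so by monotonicity $\max_{G\in A}|(p_n/q_n)^{e(G)}-1|\ge|(p_n/q_n)^{\lambda_n/2}-1|$. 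Using $\log(1+x)\ge(\log 2)\min(x,1)$ for $x\ge 0$, $-\log(1-y)\ge y$ for $y\in[0,1)$, and $\lambda_n\,|p_n-q_n|/q_n=\binom n2|p_n-q_n|\ge\delta/3$ for $n$ large, one checks that $(p_n/q_n)^{\lambda_n/2}$ is bounded away from $1$: at most $e^{-\delta/6}$ if $p_n<q_n$, at least $2^{\delta/6}$ if $p_n>q_n$ (and $p_n=0$ is trivial, the ratio then being $0$ for any $G$ with an edge). Hence the quantity in braces exceeds a positive constant $c(\delta)$ for \emph{every} $A$, so $\operatorname{I-Div}\ge c(\delta)$ along the subsequence, contradicting $\operatorname{I-Div}\to 0$.

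The ``if'' direction is mostly bookkeeping; its one good point is that the single threshold $M_n=2\lambda_n$ at once makes the binomial tail negligible and keeps $M_n|\log(p_n/q_n)|\to 0$, which is precisely where the hypothesis $|p_n-q_n|n^2\to 0$ enters. The step I expect to be the main obstacle is the lower bound in the ``only if'' direction: one must extract from an \emph{arbitrary} near-exhaustive family $A$ a single graph carrying $\Omega(\lambda_n)$ edges, and then push the elementary but slightly delicate inequalities relating $\log(p_n/q_n)$, $|p_n-q_n|/q_n$ and $\lambda_n$ through in a way uniform over the regimes $p_n<q_n$ and $p_n>q_n$ --- including the case where $p_n/q_n$ need not tend to $1$.
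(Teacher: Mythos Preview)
Your proof is correct and follows essentially the same approach as the paper: both reduce the inclusion divergence to an optimization over an edge-count threshold via $\Pr[\mathcal G(n,r)\supseteq G]=r^{e(G)}$ and monotonicity of $e\mapsto|(p_n/q_n)^e-1|$, then invoke binomial concentration for the tail term. The differences are cosmetic---you use Chernoff and a cutoff at $2\lambda_n$ where the paper uses Chebyshev and $mq_n+\omega\sqrt{mq_n(1-q_n)}$, and you argue the ``only if'' direction by contrapositive with explicit case analysis where the paper argues it directly---but the skeleton is identical.
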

\begin{proof}
Let $\mathcal{X}\lawequals\G(n,p_n)$, $\mathcal{Y}\lawequals\G(n,q_n)$, and let $|G|$ denote the number of edges in a graph $G$. Writing out the inclusion divergence, we have 
$$\operatorname{I-Div}(\mathcal{X} \vert \vert\mathcal{Y})=\max_{ G \in A} 
  \left|\frac{\Pr[\mathcal{X}\supseteq G]}{\Pr[\mathcal{Y} \supseteq G]}-1\right|
  +\Pr[ \mathcal{Y} \in A^c]$$
for some $A\subset\mathbb{G}_n$.
Since, for any $G$, $$\left|\frac{\Pr[\mathcal{X} \supseteq G]}{\Pr[\mathcal{Y}\supseteq G]}-1\right|=\left|\frac{p_n^{|G|} }{q_n^{|G|}}-1\right|$$
which is monotone increasing in $|G|$, 
the inclusion divergence will take the form $$\Pr[ \mathcal{Y} \in A^c]+\left|\frac{p_n^{g} }{q_n^{g}}-1\right|$$
where $g$ is the maximum number of edges among graphs in $A$. Whatever this $g$ may be, we can minimize the contribution from $\Pr[ \mathcal{Y} \in A^c]$ by choosing $A$ to be the set of all graphs with at most $g$ edges. Therefore, letting $m={n\choose 2}$, 
$$\operatorname{I-Div}(\mathcal{X} \vert \vert\mathcal{Y})=\min_{g\leq m}\biggl\{\left|\frac{p_n^g }{q_n^g}-1\right|+\Pr[\Binomial(m,q_n)>g]\biggr\}.$$
If $\operatorname{I-Div}(\mathcal{X} \vert \vert \mathcal{Y}) < \epsilon$,
  for an $\epsilon <\tfrac 12,$
  we can conclude that $g\geq mq_n$. It follows that for every $\delta>0$ there is an $\epsilon_0>0$ sufficiently small that $\epsilon < \epsilon_0$ implies 
  \[
    |p_n/q_n-1|mq_n \leq |p_n/q_n-1|g <\delta.
  \]
  Therefore 
  \[
    \operatorname{I-Div}\big(\mathcal{X} \vert \vert \mathcal{Y} \big)
    \to 0
    \quad
    \implies
    \quad
    |p_n-q_n|n^{2} \to 0.
  \]
  Conversely,  if $|p_n-q_n| n^{2} \to 0$, we can take 
  $A$ to be the set of graphs with at most $mq_n+\omega\sqrt{mq_n(1-q_n)}$ edges for $\omega=\omega_n \to \infty$ growing slowly enough that
  \[
    \biggl|
    \frac{p_n^g}{q_n^g}
    -1\biggr|
    =
    \biggl|
    (1+o(1/(mq_n))^g
    -1\biggr|
    =o(1).
  \]
  By Chebyshev's inequality, $\Pr[\G(n,q_n)\in A^c]=o(1)$.

\end{proof}
\begin{remark}
This displays, as previously mentioned, that inclusion divergence distinguishes Erd{\"o}s--R{\'e}nyi graphs in this regime with greater resolution than total variation distance. It has been observed (see, for instance, the relevant example from the introduction of \cite{Janson}) that $\TV(\mathcal{G}(n,p_n) ,\mathcal{G}(n,q_n)) \to 0$ 
if and only if $|p_n-q_n|n^2=o(\sqrt{n^2q_n})$, a far weaker condition than requiring that $|p_n-q_n|n^2\to0$, given that we are assuming $n^2q_n\to\infty$.  
\end{remark}

\section{The edge inclusion probability}
\label{sec:p0}

We will start with our most basic computational tool, namely that we know explicitly the probability density function for the inner product of two uniform points on $\mathbb{S}^{d-1}$. \begin{lemma}\label{b&b}
Let $X,Y$ be independent uniform points on $\mathbb{S}^{d-1}$. For any $t\in\R$, 
$$\Pr[\langle X,Y\rangle\geq t]=\Pr[\langle X,Y\rangle\geq t|Y]=\Pr[X_1\geq t]=\int_t^1f_d(x)dx$$
where $$f_d(x):=\frac{\Gamma(d/2)}{\sqrt\pi\Gamma((d-1)/2)}(1-x^2)_+^{\frac{d-3}{2}}.$$ 
\end{lemma}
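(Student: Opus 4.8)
The plan is to establish the three claimed equalities in turn. For the first equality, I would use rotational invariance of the uniform (Haar) measure on $\mathbb{S}^{d-1}$. Condition on $Y$; since $X$ is independent of $Y$ and its law is invariant under the orthogonal group $O(d)$, for any fixed rotation $R$ with $RY = e_1$ we have $\langle X, Y\rangle = \langle RX, RY\rangle = \langle RX, e_1\rangle = (RX)_1$, and $RX$ has the same law as $X$. Hence the conditional probability $\Pr[\langle X,Y\rangle \geq t \mid Y]$ equals $\Pr[X_1 \geq t]$, which does not depend on $Y$; taking expectations over $Y$ gives the first and second equalities simultaneously.

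For the remaining identity, I need the density of the first coordinate $X_1$ of a uniform point on $\mathbb{S}^{d-1}$. The standard approach is to represent $X = Z/|Z|$ where $Z \sim \Normal(0, I_d)$, or more directly to compute the density by a surface-area (co-area) argument: the set $\{x \in \mathbb{S}^{d-1} : x_1 \in [s, s+ds]\}$ is (up to first order) a scaled copy of $\mathbb{S}^{d-2}$ of radius $\sqrt{1-s^2}$, and accounting for the tilt of the slice relative to the sphere contributes a factor $(1-s^2)^{-1/2}$, so the induced density is proportional to $(1-s^2)^{(d-2)/2} \cdot (1-s^2)^{-1/2} = (1-s^2)^{(d-3)/2}$ on $[-1,1]$. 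It then remains only to identify the normalizing constant: $\int_{-1}^1 (1-s^2)^{(d-3)/2}\,ds = B\!\left(\tfrac12, \tfrac{d-1}{2}\right) = \frac{\sqrt\pi\,\Gamma((d-1)/2)}{\Gamma(d/2)}$ by the Beta integral, which gives exactly the reciprocal of the constant in the statement of $f_d$. This confirms $\Pr[X_1 \geq t] = \int_t^1 f_d(x)\,dx$.

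There is no real obstacle here; the only point requiring a little care is the geometric derivation of the slice density (in particular the $(1-s^2)^{-1/2}$ tilt factor), which one could alternatively sidestep entirely by the Gaussian representation: write $X_1 = Z_1/\sqrt{Z_1^2 + \cdots + Z_d^2}$, note $Z_1^2 \sim \chi^2_1$ and $Z_2^2 + \cdots + Z_d^2 \sim \chi^2_{d-1}$ independently, so $X_1^2 \sim \operatorname{Beta}(\tfrac12, \tfrac{d-1}{2})$, and then transform. Either route is routine. I would present the Gaussian-representation version, as it makes the appearance of the Beta/Gamma constants transparent and keeps the argument short.
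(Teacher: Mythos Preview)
Your proposal is correct and matches the paper's approach: the paper argues by rotational invariance that $Y\mapsto\Pr[\langle X,Y\rangle\geq t\mid Y]$ is constant, hence equals $\Pr[X_1\geq t]$, and then simply cites the marginal density of a coordinate of a uniform spherical point as known. Your write-up actually goes further than the paper by sketching two derivations of that marginal density (co-area and Gaussian representation), which the paper omits.
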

\begin{proof}
By rotational invariance, $Y\mapsto\Pr[\langle X,Y\rangle\geq t|Y]$ is a constant map. So in particular we have 
$$\Pr[\langle X,Y\rangle\geq t|Y]=\Pr[\langle X,e_1\rangle\geq t]=\Pr[X_1\geq t]$$
which gives the second inequality, and taking the expectation of this expression gives the first equality. 
The last equality just uses the known marginal density function of a coordinate of a uniform point on $\mathbb{S}^{d-1}$ with respect to Lebesgue measure.
\end{proof}
By noticing that $f_d(x)$ is very close to being the density of a mean zero, variance $d^{-1}$, normal random variable, we can achieve upper and lower bounds for $t_{p,d}$, which we recall is defined by the solution of 
\[
  p = \int_{t_{p,d}}^1 f_d(x)\,dx.
\]
We note that the density $(1-x^2)^{\tfrac{d-3}{2}}$ is log concave and moreover its density satisfies
\[
  \frac{d^2}{dx^2} \log ( (1-x^2)^{\tfrac{d-3}{2}} ) 
  = -(d-3)\frac{d}{dx}\frac{x}{1-x^2}
  = -(d-3)\frac{1+x^2}{(1-x^2)^2}
  \leq -(d-3).
\]
Thus by \cite[Theorem 11]{Caffarelli} or \cite[Theorem 9.2]{Wellner} there is an even transport map $T : \R \to \R$ so that $|T(x)| \leq |x|$ and so that $T(Z/\sqrt{d-3})$ has density $f_d$ for $Z$ a standard normal.  Hence, we have an exact domination for the tail functions:
\begin{equation}\label{eq:ub}
  \Pr( T(Z/\sqrt{d-3}) \geq t )
  \leq
  \Pr( Z \geq t\sqrt{d-3})
  \quad
  \text{for all }
  t > 0, Z \sim N(0,1).
\end{equation}
As a direct consequence, we have that
\begin{equation}\label{eq:p0}
  p 
  \leq
  \int_{t_{p,d}}^\infty e^{-(d-3)x^2/2}\sqrt{\frac{{d-3}}{{2\pi}}}dx
  \quad
  \text{and}
  \quad
  \sqrt{d-3}t_{p,d} \leq \Phi^{-1}(1-p),
\end{equation}
where $\Phi^{-1}$ is the quantile function of the normal.
Moreover, these bounds are close to sharp, as we show in the following lemma.
We note that other quantitative bounds are shown in
\cite[Lemma 2]{bubeck}, \cite[Lemma 5.1]{Phase} and \cite[Lemma 1]{Devroye}.
\begin{lemma}\label{lem:pp0}
Suppose $p\to0$ and $d=\omega(\log^2 p^{-1})$, and let $p_0:=1-\Phi(\sqrt{d}t_{p,d})$. Then 
$$t_{p,d}\sim\sqrt{\frac{2\log p^{-1}}{d}} \quad \text{and} \quad p=p_0(1-(1+o(1))dt_{p,d}^4/4).$$
\end{lemma}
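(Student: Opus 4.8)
The plan is to treat both asymptotic claims as consequences of the Gaussian approximation to the integral $p = \int_{t_{p,d}}^1 f_d(x)\,dx$, using the transport map $T$ from the excerpt to pin down the error terms. First I would establish the leading order $t_{p,d} \sim \sqrt{2\log p^{-1}/d}$: the upper bound $\sqrt{d-3}\,t_{p,d} \le \Phi^{-1}(1-p)$ is already recorded in \eqref{eq:p0}, and since $\Phi^{-1}(1-p) \sim \sqrt{2\log p^{-1}}$ as $p\to 0$ (standard Gaussian tail asymptotics), this gives $t_{p,d} \le (1+o(1))\sqrt{2\log p^{-1}/d}$. For the matching lower bound I would need a lower bound on the tail of $T(Z/\sqrt{d-3})$; here I would use that $|T(x)|\le|x|$ forces $t_{p,d}>0$ and, more quantitatively, derive a one-sided estimate showing $\Pr(T(Z/\sqrt{d-3})\ge t)$ cannot be too much smaller than the Gaussian tail when $t\sqrt d$ is of order $\sqrt{\log p^{-1}}$ and $d=\omega(\log^2 p^{-1})$ (so that $t_{p,d}\to 0$ and the density $f_d$ is genuinely close to Gaussian on the relevant range). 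Equivalently one can bound $f_d(x)/\phi_{d-3}(x)$ from below on $x\in[0, C\sqrt{\log p^{-1}/d}\,]$ using $(1-x^2)^{(d-3)/2} = \exp(\tfrac{d-3}{2}\log(1-x^2)) = \exp(-\tfrac{d-3}{2}(x^2 + x^4/2 + \cdots))$, which is $\ge e^{-(d-3)x^2/2}e^{-O(d x^4)}$, and $dx^4 = O((\log p^{-1})^2/d) = o(1)$ on this range.

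The second, sharper claim $p = p_0(1 - (1+o(1))dt_{p,d}^4/4)$ is really the content of the lemma and is where the main work lies. The idea is to compare the two tail integrals at the \emph{same} threshold $t = t_{p,d}$: we have $p = \int_t^1 f_d(x)\,dx$ and $p_0 = 1-\Phi(\sqrt d\, t) = \int_t^\infty \phi_{\sqrt d}(x)\,dx$ where $\phi_{\sqrt d}$ is the $N(0,1/d)$ density. I would write $p/p_0 = 1 - (p_0 - p)/p_0$ and estimate $p_0 - p = \int_t^\infty \phi_{\sqrt d}(x)\,dx - \int_t^1 f_d(x)\,dx$. The cleanest route is a change of variables $x = u/\sqrt d$ (or $u/\sqrt{d-3}$, absorbing the $O(1/d)$ discrepancy between $d$ and $d-3$ into the error at the end) so that both become one-dimensional Gaussian-type tail integrals from $u_0 := \sqrt d\, t_{p,d}$ to $\infty$, and then expand the ratio of integrands: $f_d(u/\sqrt d)/(\sqrt d\,\phi(u))$, using the exact normalizing constant $\Gamma(d/2)/(\sqrt\pi\,\Gamma((d-1)/2)) = \sqrt{d/(2\pi)}(1+O(1/d))$ (from Stirling / the duplication formula) and $(1-u^2/d)^{(d-3)/2} = \exp(-u^2/2)\exp(-u^4/(4d) + O(u^6/d^2) + O(u^2/d))$. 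Since the relevant integrals are concentrated near $u = u_0 \sim \sqrt{2\log p^{-1}}$ (Gaussian tails are localized to an $O(1)$ window above the threshold), and $u_0^4/d = (2\log p^{-1})^2/d(1+o(1)) = o(1)$ under the hypothesis $d = \omega(\log^2 p^{-1})$, the factor $e^{-u^4/(4d)}$ can be replaced throughout by $e^{-u_0^4/(4d)} = 1 - u_0^4/(4d) + o(u_0^4/d)$, pulling it outside the integral up to lower-order error. This yields $p = p_0 \cdot (1+O(1/d)) \cdot (1 - (1+o(1))u_0^4/(4d))$, and since $u_0^4/(4d) = d t_{p,d}^4/4$ and $O(1/d) = o(d t_{p,d}^4)$ because $d t_{p,d}^4 = (2\log p^{-1})^2/d \cdot(1+o(1)) \gg 1/d$, the claim follows.

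The main obstacle I anticipate is making the localization rigorous, i.e. controlling the integral of the error terms (the $O(u^6/d^2)$ and $O(u^2/d)$ pieces in the exponent, and the tail of the integral beyond $u = 1\cdot\sqrt d$ where the supports of $f_d$ and $\phi$ genuinely differ) uniformly over the range of $u$ that matters, and showing they contribute $o(p_0 \cdot d t_{p,d}^4)$ rather than merely $o(p_0)$. The tail-beyond-$\sqrt d$ issue is harmless because $f_d$ vanishes there and the Gaussian contributes $\le e^{-d/2}$, which is super-exponentially smaller than $p_0 \ge e^{-(1+o(1))\log p^{-1}} = p^{1+o(1)}$. For the in-range error terms I would bound $\int_{u_0}^\infty u^6 e^{-u^2/2}\,du / \int_{u_0}^\infty e^{-u^2/2}\,du = O(u_0^6)$ (standard for Gaussian tails), so the $O(u^6/d^2)$ term contributes $O(u_0^6/d^2) = O((\log p^{-1})^3/d^2) = o((\log p^{-1})^2/d) = o(d t_{p,d}^4)$, again using $d = \omega(\log^2 p^{-1})$; the $O(u^2/d)$ term contributes $O(u_0^2/d) = O(\log p^{-1}/d)$, which is also $o(d t_{p,d}^4)$ since $d t_{p,d}^4 \asymp (\log p^{-1})^2/d$. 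Finally I would reconcile $d$ versus $d-3$: replacing $\sqrt{d-3}$ by $\sqrt d$ in the definition of $u_0$ shifts $u_0^2$ by an additive $O(1)$ and $u_0^4/d$ by $o(u_0^4/d)$, all absorbable, so the distinction does not affect the stated asymptotics.
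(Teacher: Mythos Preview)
Your proposal is correct and follows essentially the same route as the paper: compare $f_d$ to the $N(0,1/d)$ density by Taylor-expanding the density ratio, then reduce the correction to a truncated-Gaussian moment computation that produces the $dt_{p,d}^4/4$ term. The paper organizes this slightly differently---it writes $\int_t^\infty f_d = C_d'\,\E[(1-Z^2)_+^{(d-3)/2}e^{dZ^2/2}\one\{t\le Z\le 1\}]$, expands the bracket as a polynomial $1+\tfrac32 Z^2+\tfrac{15-2d}{8}Z^4+O(dZ^6)$, and then uses the integration-by-parts recursion $m_k=\tfrac{k-1}{d}m_{k-2}+\cdots$ to get $m_{2k}=(1+o(1))t^{2k}m_0$---whereas you keep the expansion in the exponent and invoke the tail-moment asymptotic $\int_{u_0}^\infty u^k e^{-u^2/2}\,du \big/ \int_{u_0}^\infty e^{-u^2/2}\,du \sim u_0^k$ directly. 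These are the same computation in two notations. For the lower bound on $t_{p,d}$ the paper first extracts a crude bound $t_{p,d}=\omega(d^{-1/2})$ from $p\ge \tfrac12 - C_d(\Phi(\sqrt{d-3}\,t_{p,d})-\tfrac12)$ and only later bootstraps to the sharp asymptotic via the $p/p_0$ relation; your density-ratio lower bound on $[0,C\sqrt{\log p^{-1}/d}]$ gets there in one step, which is arguably cleaner but requires you to remember the normalization-constant factor $\Gamma(d/2)/(\sqrt\pi\,\Gamma((d-1)/2))\cdot\sqrt{2\pi/(d-3)}=1+O(1/d)$, which you did not mention explicitly.
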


\begin{remark}
We note that when $d = o(n \log^2n)$, it therefore follows that $|p_0-p|n^{2} \to \infty$, when $pn \to c > 0.$  This in particular implies that the difference between $p$ and $p_0$ is sufficiently large that $\mathcal{G}(n,p)$ and $\mathcal{G}(n,p_0)$ are no longer equivalent in the sense of inclusion divergence (see Theorem \ref{thm:idiver}).
\end{remark}
\noindent This leads us naturally to conjecture rather that:
\begin{conjecture}\label{conj:lower}
  Suppose that $p \sim c_0/n$ for some $c_0 > 0$ and that $d \sim c_1 n\log^2n$ for $c_1 > 0$.
  Then $\operatorname{I-Div}(\G(n,p,d) \vert\vert (\G(n,p_0))) \to 0$. 
\end{conjecture}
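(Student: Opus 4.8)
The plan is to obtain the conjecture from a sharpened form of Theorem~\ref{mainmain} in which the comparison with $\G(n,p_0)$ remains valid down to $d$ of order $n\log^2 n$. As in the non--divergence reformulation after Theorem~\ref{mainmain}, I would take $A=A_n$ to be the set of graphs $G$ satisfying the natural tightened versions of the conditions listed there, namely $\mu_G\le n$, $\sigma_G\le K(c_0)n$, $\delta_G=O(\log n/\log\log n)$ and $\tau_G$ bounded; these all hold with probability tending to $1$ under $\G(n,p_0)$, so $\Pr[\G(n,p_0)\in A^c]\to 0$, and by Lemma~\ref{lem:pp0} one may use $p$ and $p_0$ interchangeably in the relevant range. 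It then remains to prove
\[
  \lim_{d\to\infty}\ \max_{G\in A}\ \left|\frac{\Pr[\G(n,p,d)\supseteq G]}{\Pr[\G(n,p_0)\supseteq G]}-1\right|=0 .
\]

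For the analytic core I would reuse the framework of Section~\ref{sec:bounds}: $\Pr[\G(n,p,d)\supseteq G]$ is the joint upper tail of the $\sigma_G$ spherical Wishart entries $(\langle X_i,X_j\rangle)_{(i,j)\in E(G)}$, represented as an integral of their characteristic function over the steepest--descent contours of Section~\ref{sec:steepest}, with the saddle calibrated so that the leading term reproduces $\prod_{(i,j)\in E(G)}\Pr[\langle X_i,X_j\rangle\ge t_{p,d}]=p^{\sigma_G}$. The new ingredient is to carry the Taylor expansion of the log--characteristic function two or three orders further than in the proof of Theorem~\ref{mainmain}, keeping the higher cumulants uniformly over $A$, so as to show that the quadratic (Hessian) part of the Gaussian integral, the off--diagonal coupling among edges sharing a vertex, and the cyclic corrections are each $1+o(1)$. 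These three sources are, roughly, what produce the $\mu_G\log\mu_G\,\delta_G^2\log^2(1/p)$, the $\sigma_G\delta_G^2\log^2(1/p)$, and the $\tau_G^2\sigma_G\log^3(1/p)$ summands in the hypothesis of Theorem~\ref{mainmain}; the aim of the finer expansion (together with the tightened $A$) is to reduce each of them to $o(d)$ already when $d$ is of order $n\log^2 n$. The main work here is controlling the conditioning of the $\sigma_G\times\sigma_G$ phase Hessian --- governed by $\delta_G$ and the short--cycle structure --- uniformly over $A$.

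The main obstacle, however, is visible in the simplest graphs and shows that the statement as worded is \emph{false} at $c_1=\Theta(1)$. If $G$ is a forest, then by leaf--stripping and rotational invariance (as in Lemma~\ref{b&b}) $\Pr[\G(n,p,d)\supseteq G]=p^{\sigma_G}$ \emph{exactly}, with no correlation corrections available; and the typical realization of $\G(n,p_0)$ is forest--like with $\sigma_G\sim c_0 n/2$ edges (any residual cycles contributing only $1+o(1)$ multiplicative factors). Hence, by Lemma~\ref{lem:pp0},
\[
  \frac{\Pr[\G(n,p,d)\supseteq G]}{\Pr[\G(n,p_0)\supseteq G]}=(1+o(1))\Bigl(\tfrac{p}{p_0}\Bigr)^{\sigma_G}=(1+o(1))\exp\!\Bigl(-(1+o(1))\tfrac14\sigma_G\, d\, t_{p,d}^4\Bigr),
\]
and $\tfrac14\sigma_G\, d\, t_{p,d}^4\sim \sigma_G(\log p^{-1})^2/d\sim c_0/(2c_1)$, a positive constant. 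The ratio therefore tends to $e^{-c_0/(2c_1)}\ne1$, the displayed maximum is bounded away from $0$, and $\operatorname{I-Div}(\G(n,p,d)\vert\vert\G(n,p_0))$ is bounded away from $0$ --- in fact it has a positive limit that tends to $0$ as $c_1\to\infty$, consistent with the ``up to logarithmic factors'' heuristic.

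Consequently I would prove instead the corrected statement with $d=\omega(n\log^2 n)$: then $\sigma_G\, d\, t_{p,d}^4\to 0$, the forest ratio tends to $1$, and the scheme of the second paragraph goes through to give $\operatorname{I-Div}(\G(n,p,d)\vert\vert\G(n,p_0))\to 0$. The only remaining work is the uniform higher--cumulant and Hessian bounds described above; these are finer versions of estimates already present in the proof of Theorem~\ref{mainmain} and should follow from the same steepest--descent analysis of the characteristic function of the spherical Wishart matrix, carried to the next order in $1/d$.
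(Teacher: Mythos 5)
The statement you were asked about is an open conjecture: the paper offers no proof of it, so there is no argument to compare yours against. What you have actually produced is a refutation, and I believe its core is correct. The identity $\Pr[\G(n,p,d)\supseteq G]=p^{\sigma_G}$ for forests is exact (reveal the points in an order in which each new vertex has exactly one earlier neighbour and apply Lemma \ref{b&b}), and combined with Lemma \ref{lem:pp0} it gives
\[
\frac{\Pr[\G(n,p,d)\supseteq G]}{\Pr[\G(n,p_0)\supseteq G]}=\Bigl(\frac{p}{p_0}\Bigr)^{\sigma_G}\longrightarrow e^{-c_0/(2c_1)}\neq 1
\]
whenever $\sigma_G\sim c_0n/2$ and $d\sim c_1n\log^2n$. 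For $c_0<1$ the forests with $(1+o(1))c_0n/2$ edges carry probability bounded away from $0$ under $\G(n,p_0)$, so any admissible $A$ in the definition of the inclusion divergence either omits an event of non-vanishing probability or contains such a forest; in either case $\operatorname{I-Div}$ is bounded below. This is a genuine counterexample, and it points in the opposite direction from the paper's intuition: on forests $\G(n,p,d)$ agrees with $\G(n,p)$ exactly, not with $\G(n,p_0)$, so at $d\asymp n\log^2n$ the natural guess is that $\operatorname{I-Div}(\G(n,p,d)\,\vert\vert\,\G(n,p))\to 0$ while the divergence from $\G(n,p_0)$ stays bounded away from zero. The discrepancy is precisely the accumulation over $\sigma_G$ edges of the one-edge correction $p=p_0(1-(1+o(1))dt_{p,d}^4/4)$ of Lemma \ref{lem:pp0}; Theorem \ref{mainmain} never sees it because its hypotheses already force $\sigma_G dt_{p,d}^4\to 0$.

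Two caveats. First, for $c_0\geq 1$ a typical realization of $\G(n,p_0)$ is not a forest and its giant component has $\Theta(n)$ excess edges, so your parenthetical claim that residual cycles contribute only $1+o(1)$ needs an actual argument; a cumulant count over $k$-cycles suggests the total correction is $\exp(O(dt_{p,d}^3))=1+o(1)$ in this regime, but estimates of exactly this kind are what the paper labours to prove, and only for triangles rather than all cycle lengths. Second, your ``corrected statement'' at $d=\omega(n\log^2n)$ does not follow from your sketch: the error terms in Lemmas \ref{bound1}--\ref{bound3} lose factors of $\delta_G^2$ and $\log\mu_G$ relative to $\sigma_G\log^2(1/p)$, which is why Theorem \ref{thm:idivergence} stalls at $d=\omega(n\log^5n)$. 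Removing those losses is the substantive open problem here, not a routine next-order refinement of the steepest-descent expansion, so that part of your proposal should be presented as a conjecture of your own rather than as a proof outline.
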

\noindent It is easily checked that 
\[
  \operatorname{I-Div}(\G(n,p,d) \vert\vert (\G(n,p_0))) \to 0
  \implies
  \operatorname{I-Div}(\G(n,p,d) \vert\vert (\G(n,p))) \centernot{\to} 0
\]
for $d,p$ as chosen in the conjecture.  

\begin{proof}
As we saw in (\ref{eq:p0}), $$t_{p,d}\leq\frac{\Phi^{-1}(1-p)}{\sqrt{d-3}}\sim\sqrt{\frac{2\log p^{-1}}{d}}.$$
Here and later we are using the fact that $\Phi^{-1}(1-p)^2\sim2\log p^{-1}$ for small $p$. 
For a working lower bound, note that we also have 
$$\frac{1}{2}-p\leq C_d\int_0^{t_{p,d}}e^{-(d-3)x^2/2}\sqrt{\frac{d-3}{2\pi}}\mathrm{d}x.$$
where $C_d \coloneqq \sqrt{\frac{2}{d-3}}\frac{\Gamma(d/2)}{\Gamma((d-1)/2)}\leq\frac{1}{1-2/d}$.
Thus 
$$p\geq\frac{1}{2}-C_d\left(\Phi(\sqrt{d-3}t_{p,d})-\frac{1}{2}\right)$$
and
$$t_{p,d}\geq\frac{\Phi^{-1}(1-p-d^{-1})}{\sqrt{d-3}}=\omega(d^{-1/2}).$$

To compare $p$ to $p_0$, let $C_d' \coloneqq \sqrt{\frac{2}{d}}\frac{\Gamma(d/2)}{\Gamma((d-1)/2)}=1+O(d^{-1})$.  
Then, letting $Z\lawequals\mathcal{N}(0,d^{-1})$, we have 
\begin{align*}
\int_{t}^\infty f_d(x)\mathrm{d}x&=C_d'\E\left[(1-Z^2)_+^{\frac{d-3}{2}}e^{\frac{d}{2}Z^2}\one\{Z\geq t\}\right] \\
&=C_d'\E\left[\left(1+\frac{3}{2}Z^2+\frac{15-2d}{8}Z^4+O(dZ^6)\right)\one\{t\leq Z<1\}\right].
\end{align*}
Writing 
$m_k:=\E[Z^k\one\{t\leq Z\leq 1\}]$, 
one can show by integration by parts that 
$$m_k=\frac{k-1}{d}m_{k-2}+\frac{t^{k-1}\phi(\sqrt{d}t)-\phi(\sqrt{d})}{\sqrt{d}}$$
where $\phi$ is the density of the standard normal, and $m_0=\Pr[t\leq Z\leq1]$.
Recall that for $x\to\infty$,  $$\frac{\phi(x)}{x}\sim1-\Phi(x)\leq e^{-x^2/2}.$$ Thus, assuming that $t=\omega(d^{-1/2})$, it is not hard to show from here that $m_{2k}=m_0(1+o(1))t^{2k}$. 
Thus 
$$\int_{t}^\infty f_d(x)\mathrm{d}x=m_0\left(1-(1+o(1))dt^4/4\right)= p_0\left(1-(1+o(1))dt^4/4\right).$$
Applying this to $t=t_{p,d}$ now gives the desired result. Using this, we can now provide a sharper lower bound for $t_{p,d}$ than we currently have. Specifically, we have 
$$p\geq\left(1-(1+o(1))\frac{\log^2p^{-1}}{d}\right)(1-\Phi(\sqrt{d}t_{p,d})).$$
Thus
$$t_{p,d}\geq(1-o(1))\sqrt{\frac{2\log p^{-1}}{d}}$$
provided that $d=\omega(\log^2p^{-1})$. 
\end{proof}

\section{The Spherical Wishart Matrix}
\label{sec:wishart}

Let $\Sym$ denote the ${{n+1} \choose 2}$--dimensional vector space of real symmetric $n\times n$ matrices. By identifying $\Sym$ with $\R^{{n+1} \choose 2}$, we can equip $\Sym$ with the Euclidean metric, the Borel sigma--algebra, Lebesgue measure, and a partial ordering $\succeq$ defined by writing $M\succeq N$ if and only if $M-N$ is positive semidefinite. We will also use $\C \otimes \Sym$ to denote the complex symmetric matrices.

We will be interested in two subspaces of $\Sym$, the $n$-dimensional subspace of real diagonal $n\times n$ matrices, denoted $\Diag$, and ${\Offdiag}:=\Diag^{\perp}$, the space of $n\times n$ real symmetric matrices with 0 along the diagonal.  Let $\diag:\Sym\to\D$ be the projection mapping onto $\D$. The inner product on $\Sym$ inherited from the inner product on $\R^{{n+1} \choose 2}$ can be expressed as 
$$\langle M,N\rangle:=\frac{1}{2}\tr(MN+\diag M\diag N).$$
For any $t \geq 0$ we will also let $\Diag_t$ denote those real diagonal matrices with all entries in $[-t,t]$.

\paragraph{Wishart matrices.}
One access point we have to information about the Gram matrix of $n$ i.i.d. uniformly random spherical vectors is its relationship with a more well--studied random matrix called the \emph{Wishart matrix} which, in its simplest form, is the Gram matrix of $n$ i.i.d. $d$--dimensional standard normal vectors. Denote this matrix by $W$. We will only discuss $W$ as much as it is needed; see Section 3.2 of \cite{muirhead2005aspects} for more details and proofs of the following formulas. 
Assuming $d \geq n$, the density of $W$ over $\Sym$ is given by 
\begin{equation}\label{Wdensity}
  f(Y):=\frac{\etr\left(-\frac{1}{2}Y\right)\det Y^{\frac{d-n-1}{2}}\one\{Y\succeq0\}}{2^{\frac{nd}{2}}\Gamma_n(d/2)},
\end{equation}
where $\Gamma_n(d/2):=\pi^{\frac{n(n-1)}{4}}\prod_{i=1}^n\Gamma\left(\frac{d-i+1}{2}\right)$ and $\etr :=\exp\tr$ is the exponential of the trace. The characteristic function of $W$ is given by 
\begin{equation}\label{eq:Wishart}
  \varphi_W(\Theta)=
  \E\exp\left(i\langle W,\Theta\rangle\right)=
  \det\big(\Id-i(\Theta+\diag\Theta)\big)^{-\frac{d}{2}}
\end{equation}
for any $\Theta\in\Sym$ (see \cite[Theorem 3.2.3]{muirhead2005aspects}).
Moreover, this extends to complex $\Theta$ as the following lemma shows.  
\begin{lemma}\label{lem:Wishart}
  For $\Theta = \Theta_1 + i \Theta_2$, where $\Theta_1,\Theta_2 \in \Sym$ satisfy $\Id + \Theta_2 + \diag \Theta_2 \succ 0$
  \[
    \Exp \exp\left( i \langle W,\Theta \rangle \right)
    = 
    \det\big(\Id-i(\Theta+\diag\Theta)\big)^{-\frac{d}{2}},
  \]
  and moreover the characteristic function is analytic in $\Theta$ in this domain.
\end{lemma}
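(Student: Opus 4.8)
The plan is to extend the identity \eqref{eq:Wishart} from real $\Theta$ to the complex strip described in the statement by analytic continuation in a single complex parameter, after which analyticity in all of $\Theta$ follows from Hartogs' theorem (or from a direct multivariable argument). First I would fix $\Theta_1, \Theta_2 \in \Sym$ with $\Id + \Theta_2 + \diag \Theta_2 \succ 0$ and define, for $z$ in a complex neighborhood of the real segment $[0,1]$, the function
\[
  g(z) := \Exp \exp\left( i \langle W, \Theta_1 + i z \Theta_2 \rangle \right).
\]
The key point is that the integrand is dominated by an integrable function uniformly for $z$ in a neighborhood of $[0,1]$: writing out $\langle W, \Theta_1 + iz\Theta_2 \rangle$ using the density \eqref{Wdensity}, the real part of the exponent is controlled by $\Re(i \langle W, iz\Theta_2\rangle) = -\Re(z) \langle W, \Theta_2 \rangle$ plus a bounded-below term, and since $W \succeq 0$ this is bounded above by a fixed multiple of $\tr W$ once $\Re(z)$ is close enough to a value making $\Id + \Re(z)(\Theta_2 + \diag \Theta_2)$ positive definite. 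Because the Wishart density has a Gaussian-type tail (the $\etr(-\tfrac12 Y)$ factor), such a bound is integrable, so by the standard theorem on differentiation under the integral sign $g$ is holomorphic on a neighborhood of $[0,1]$. On the real segment $z \in [0,1]$ — actually it suffices to have this on a subinterval near $0$ where convergence is automatic — $g(z)$ agrees with $\det(\Id - i(\Theta_1 + iz\Theta_2) - i\diag(\Theta_1 + iz\Theta_2))^{-d/2}$ by \eqref{eq:Wishart}; the right-hand side is also holomorphic in $z$ on the region where $\Id + \Re(z)(\Theta_2 + \diag\Theta_2) \succ 0$ keeps the matrix away from the branch locus, so by the identity theorem the two agree at $z = 1$, giving the claimed formula for this particular $\Theta$.

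To upgrade this to joint analyticity in $\Theta$, I would run essentially the same dominated-convergence argument but now letting $\Theta$ range over the open set $\mathcal{U} := \{\Theta_1 + i\Theta_2 : \Id + \Theta_2 + \diag\Theta_2 \succ 0\}$ in $\C \otimes \Sym$. For fixed $\Theta \in \mathcal{U}$ one checks as above that $\sup_{\Theta'} |\exp(i\langle W, \Theta'\rangle)|$ over a small polydisc around $\Theta$ is bounded by $C \etr(\epsilon \tfrac12 W)$ with $\epsilon < 1$, hence integrable; this makes $\Theta \mapsto \Exp \exp(i\langle W, \Theta\rangle)$ continuous and separately holomorphic in each complex coordinate on $\mathcal{U}$, and therefore jointly holomorphic. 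The function $\det(\Id - i(\Theta + \diag\Theta))^{-d/2}$ is holomorphic on $\mathcal{U}$ as well (the determinant is a polynomial in the entries, it is nonvanishing there since $\Id - i(\Theta + \diag\Theta) = (\Id + \Theta_2 + \diag\Theta_2) - i(\Theta_1 + \diag\Theta_1)$ has positive-definite real part, hence invertible, and one can choose the principal branch of the power consistently on the connected set $\mathcal{U}$). Since the two holomorphic functions coincide on the totally real locus $\Theta_2 = 0$ where \eqref{eq:Wishart} applies — and this locus is a uniqueness set for holomorphic functions on the connected open set $\mathcal{U}$ — they coincide on all of $\mathcal{U}$.

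The main obstacle is the domination estimate: one must verify carefully that for $\Theta$ in the stated domain (and in a small complex neighborhood), the quantity $\Re\big(i \langle W, \Theta\rangle\big)$, which equals $-\langle W, \Theta_2 \rangle$ up to $\diag$ corrections, stays below $(1-\epsilon)\tfrac12 \tr W$ for some $\epsilon>0$, so that multiplying by the $\etr(-\tfrac12 W)$ in the density still leaves an integrable (sub-Wishart) integrand. Concretely, using the inner product formula $\langle M, N\rangle = \tfrac12\tr(MN + \diag M \diag N)$, one has $2\Re(i\langle W,\Theta\rangle) = -\tr\big(W(\Theta_2 + \diag\Theta_2)\big)$, and since $\Id + \Theta_2 + \diag\Theta_2 \succ 0$ there is $\eta > 0$ with $\Theta_2 + \diag\Theta_2 \succeq -(1-\eta)\Id$; combined with $W \succeq 0$ this gives $\tr\big(W(\Theta_2 + \diag\Theta_2)\big) \geq -(1-\eta)\tr W$, which is exactly the bound needed, and a small perturbation of $\Theta_2$ only shifts $\eta$ slightly. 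Once this is in hand, everything else — differentiation under the integral sign, the identity theorem, Hartogs/Osgood — is standard, and I would relegate the routine verifications to a sentence or two.
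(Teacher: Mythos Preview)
Your one-variable warm-up has a slip: with $g(z)=\Exp\exp\big(i\langle W,\Theta_1+iz\Theta_2\rangle\big)$, the argument $\Theta_1+iz\Theta_2$ is \emph{not} real for real $z\neq 0$ (its imaginary part is $z\Theta_2$), so \eqref{eq:Wishart} does not apply on the segment $[0,1]$ and you cannot invoke the identity theorem there. The locus on which the real-case formula is available is the imaginary axis $z=is$, where $\Theta_1+iz\Theta_2=\Theta_1-s\Theta_2\in\Sym$; from there one continues to $z=1$ across the vertical strip $\{\Re z\in(a,b)\}\supset[0,1]$ you correctly identified. This is an easy fix, and in any event your second paragraph already contains a complete proof: both sides are holomorphic on $\mathcal{U}$ by your domination estimate and Hartogs, they coincide on the totally real slice $\Theta_2=0$ by \eqref{eq:Wishart}, and that slice is a uniqueness set. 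Your final domination computation, $2\Re(i\langle W,\Theta\rangle)=-\tr\big(W(\Theta_2+\diag\Theta_2)\big)\le(1-\eta)\tr W$, is exactly right.

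The paper takes a different route. Instead of analytic continuation, it reduces to $d=1$ via independence, then absorbs $\Theta_2$ by a Gaussian tilt: defining a new measure $\Q$ with $\tfrac{d\Q}{d\Pr}\propto\exp\big(-\tfrac12 x^T(\Theta_2+\diag\Theta_2)x\big)$, the vector $x$ becomes centered Gaussian with covariance $(\Id+\Theta_2+\diag\Theta_2)^{-1}$ under $\Q$, and applying the real-$\Theta$ formula \cite[Theorem~3.2.3]{muirhead2005aspects} under $\Q$ produces the determinant directly. Your approach is a touch more economical in that it needs only the identity-covariance case plus soft complex-analytic facts; the paper's change-of-measure argument is a direct probabilistic computation in which the positivity hypothesis shows up transparently as the normalizability of the tilted Gaussian. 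Both arrive at the same place with comparable effort.
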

\begin{proof}
  We reduce the problem to \cite[Theorem 3.2.3]{muirhead2005aspects}, which concerns $\Theta_2=0$ but allows $W$ to have a nontrivial covariance strucutre.  Observe that we are computing
  \[
    \Exp \exp\left( i \langle W,\Theta \rangle \right)
    =
    \etr\left( 
    \frac{i}{2} X^TX \Theta
    \right)
    =
    \Exp \exp\left( \frac{i}{2}\sum_{j=1}^d x_j^T \Theta x_j\right),
  \]
  where $X^TX = \sum_{j=1}^d x_j x_j^T$ is a representation of $W$ in terms of outer products of $d$ independent standard normals of dimension $n,$ and where we have used the cyclicity of the trace.
  Using independence, we therefore have
  \[
    \Exp \exp\left( i \langle W,\Theta \rangle \right)
    =
    \biggl(
    \Exp \exp\left( \frac{1}{2} x^T (i\Theta_1 - \Theta_2)x \right)
    \biggr)^{d}.
  \]
  Thus it suffices to consider the $d=1$ case, which we do going forward.
  Let $\Q$ be a new probability measure with Radon--Nikodym derivative 
  \[
    \frac{d\Q}{d\Pr} = 
    \exp\left( -\frac{1}{2} x^T \Theta_2 x \right)
    \det( \Id + \Theta_2 + \diag \Theta_2)^{1/2}.
  \]
  Under $\Q$, $x$ has inverse covariance matrix $\Sigma = (\Id + \Theta_2 + \diag \Theta_2)^{-1}$, and moreover,
  \[
    \Exp \exp\left( i \langle W,\Theta \rangle \right)
    =
    \Q( \exp\left( i \langle W,\Theta_1 \rangle \right) )
    \det( \Id + \Theta_2 + \diag \Theta_2)^{-1/2},
  \]
  where we have used $\Q(\cdot)$ to denote expectation with respect to $\Q.$
  Hence, we have reduced the problem to the real case with a nontrivial covariance.  This is done in \cite[Theorem 3.2.3]{muirhead2005aspects}, where it is shown
  \[
    \Q( \exp\left( i \langle W,\Theta_1 \rangle \right) )
    =
    \det( \Id - i(\Theta_1 + \diag \Theta_1)\Sigma)^{-1/2}.
  \]
  This completes the proof as
  \[
    \Exp \exp\left( i \langle W,\Theta \rangle \right)
    =
    \det( \Id - i(\Theta_1 + \diag \Theta_1)\Sigma)^{-1/2}
    \times
    \det( \Id + \Theta_2 + \diag \Theta_2)^{-1/2}
    =
    \det\big(\Id-i(\Theta+\diag\Theta)\big)^{-1/2}.
  \]
  Analyticity of moment generating functions can be concluded in general on the open set of $\Theta \in \C \otimes \Sym$ for which 
  \[
    \Exp |\exp\left( i \langle W,\Theta \rangle \right)|
    =
    \Exp \exp\left( \Re (i \langle W,\Theta \rangle) \right)
    < \infty.
  \]
\end{proof}

\paragraph{Spherical Wishart matrices.}
Recall that we could factor a Wishart matrix $W=X^TX$, which represents $W$ as a Gram matrix
\[
  W_{jk}= \langle x_j, x_k \rangle,
\]
where $X$ has columns $(x_1,x_2, \dots, x_n)$ given by independent standard normals in $\R^d.$  
Each such column can be factored as $x_j = \|x_j\|\tfrac{x_j}{\|x_j\|}$ so that $\|x_j\|$ is $\chi(d)$ distributed and $\tfrac{x_j}{\|x_j\|}$ is uniformly distributed on the sphere.  Moreover, all such lengths and spherical vectors become independent in this factorization.  As matrices, we can therefore decompose
\begin{equation}\label{law}
W = X^TX = D(\Id+V)D
\quad
\text{where}
\quad V_{jk} = \bigg\langle \frac{x_j}{\|x_j\|}, \frac{x_k}{\|x_k\|} \bigg\rangle
\quad
\text{for all $j\neq k$},
\end{equation}
and $D:=\sqrt{\diag W}$ is a diagonal matrix of i.i.d.\ $\chi(d)$ random variables independent of $V$.  We call $V$ the \emph{spherical Wishart matrix}.

When $d \geq n$ the spherical Wishart matrix admits a density.  Although it will not be used in this paper, we note the following:
\begin{lemma}
For $d \geq n$, the density of $V$ over ${\Offdiag}$ is given by 
$$Y\mapsto\frac{\Gamma(d/2)^n}{\Gamma_n(d/2)}\det(\Id+Y)^{\frac{d-n-1}{2}}\one\{\Id+Y\succ0\}.$$
\end{lemma}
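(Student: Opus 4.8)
The plan is to push the Wishart density \eqref{Wdensity} forward through the change of variables $W\mapsto(\diag W,V)$ coming from \eqref{law}. Coordinatize $\Sym$ by the matrix entries $\{w_{jk}\}_{j\le k}$; a one-line check with $\langle M,N\rangle=\tfrac12\tr(MN+\diag M\diag N)$ shows these entries form an orthonormal coordinate system, so Lebesgue measure on $\Sym$ is $\prod_{j\le k}dw_{jk}$ and, likewise, Lebesgue measure on $\Offdiag$ is $\prod_{j<k}dY_{jk}$. Parametrize $W$ by $(w_{11},\dots,w_{nn},Y)\in(0,\infty)^n\times\Offdiag$ via $W_{ll}=w_{ll}$ and $W_{jk}=\sqrt{w_{jj}w_{kk}}\,Y_{jk}$ for $j\neq k$. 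On the set $\{W\succ 0\}$, which holds with probability one since $d\ge n$, this is a diffeomorphism onto $(0,\infty)^n\times\{\Id+Y\succ 0\}$: here $D:=\sqrt{\diag W}$ is invertible, so $W\succ 0$ is equivalent to $\Id+Y\succ 0$, and $Y=D^{-1}WD^{-1}-\Id$ recovers the inverse map.

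First I would compute the Jacobian of this parametrization. Since the diagonal coordinates are left fixed, the Jacobian matrix is block triangular, and its determinant equals $\prod_{j<k}\sqrt{w_{jj}w_{kk}}=\prod_{l=1}^n w_{ll}^{(n-1)/2}$, each index occurring in $n-1$ of the pairs. Next I would rewrite \eqref{Wdensity} in the new coordinates: since $\Id+Y$ has unit diagonal, $\tr W=\sum_l w_{ll}$, so $\etr(-\tfrac12 W)=\prod_l e^{-w_{ll}/2}$, and $\det W=\big(\prod_l w_{ll}\big)\det(\Id+Y)$. Multiplying the Wishart density by the Jacobian and collecting powers of $w_{ll}$ (using $\tfrac{d-n-1}{2}+\tfrac{n-1}{2}=\tfrac{d-2}{2}$), the joint density of $(w_{11},\dots,w_{nn},Y)$ becomes
\[
\frac{1}{2^{nd/2}\Gamma_n(d/2)}\Bigl(\prod_{l=1}^n w_{ll}^{(d-2)/2}e^{-w_{ll}/2}\Bigr)\det(\Id+Y)^{\frac{d-n-1}{2}}\one\{\Id+Y\succ 0\}.
\]

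This density factors into a function of $(w_{11},\dots,w_{nn})$ times a function of $Y$ — re-deriving that $\diag W$ and $V$ are independent — so the density of $V$ over $\Offdiag$ is obtained by integrating out the $w_{ll}$. Since $\int_0^\infty w^{(d-2)/2}e^{-w/2}\,dw=2^{d/2}\Gamma(d/2)$, integrating out all $n$ of them multiplies by $2^{nd/2}\Gamma(d/2)^n$, cancelling the $2^{nd/2}$ and leaving $\dfrac{\Gamma(d/2)^n}{\Gamma_n(d/2)}\det(\Id+Y)^{\frac{d-n-1}{2}}\one\{\Id+Y\succ 0\}$, as claimed; equivalently one can divide the joint density by the known $\chi^2(d)^{\otimes n}$ law of $\diag W$. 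There is no substantive obstacle: the only points needing care are the bookkeeping in the Jacobian and the identification of the reference measures, together with the routine remark that the $\succeq$/$\succ$ discrepancy between \eqref{Wdensity} and the claim is immaterial, the relevant boundary sets being Lebesgue-null.
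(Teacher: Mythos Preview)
Your proof is correct and follows essentially the same route as the paper: both push the Wishart density \eqref{Wdensity} through the decomposition $W=D(\Id+V)D$ and remove the diagonal contribution. The only difference is cosmetic---the paper conditions on $\diag W=\Id$ and divides by the known $\chi^2(d)^{\otimes n}$ marginal, whereas you compute the Jacobian of $(\diag W,V)\mapsto W$ and integrate out the diagonal; these are two equivalent bookkeeping choices for the same computation.
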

\begin{proof}
  As the Wishart admits a density \eqref{Wdensity} on $\Sym,$ we may decompose it as the marginal density on $\Diag$ and the conditional density of $\Offdiag$ given the marginal density.  Moreover, by the independence of $D$ and $V$ in \eqref{law}, the density of $V$ is nothing but the conditional density of the Wishart on $\Sym$ given the diagonal is $\Id$.  Note the marginal density of the diagonal is the product density of $n$ independent $\chi^2(d)$ random variables, from which the expression follows. 
\end{proof} 

Instead of approaching the problem via its density, we shall approach the problem using the characteristic function, and we note that the representation we use allows for $d$ to be essentially arbitrary.  This representation is derived by partially inverse--Fourier transforming the Wishart characteristic function.

\begin{lemma}\label{chardef} 
For $d\geq3$, $\Theta\in{ \C \otimes \Offdiag}$ with $\Id + \Im \Theta \succ 0$ 
\begin{equation*}
\varphi_V(\Theta)= 
    \biggl(\frac{\Gamma(d/2)2^{d/2}e^{1/2}}{\pi}
    \biggr)^n
    \int_{\Diag}
    \etr(-i\Delta)
    \varphi_W(\Theta+\Delta)
    \mathrm{d}\Delta.
\end{equation*}
\end{lemma}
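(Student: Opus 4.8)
The plan is to exploit the factorization \eqref{law}, which exhibits $V$ as the off--diagonal part of the Wishart matrix $W$ \emph{conditioned} to have diagonal $\Id$, and to implement that conditioning as a Fourier inversion in the diagonal directions; observe that $\varphi_V(\Theta)=\E\, e^{i\langle V,\Theta\rangle}$ is finite (and entire in $\Theta$) for every $\Theta\in\C\otimes\Offdiag$ since $V$ has bounded entries. The first step is to pin down the conditional law exactly. For $\delta=(\delta_1,\dots,\delta_n)\in(0,\infty)^n$ put $D_\delta=\diag(\sqrt{\delta_1},\dots,\sqrt{\delta_n})$. Because $D=(\diag W)^{1/2}$ is independent of $V$ in \eqref{law}, a regular conditional law of $W$ given $\diag W=\diag(\delta)$ is the law of $D_\delta(\Id+V)D_\delta=\diag(\delta)+D_\delta V D_\delta$. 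Since $\Theta\in\C\otimes\Offdiag$ we have $\langle\diag(\delta),\Theta\rangle=0$, and cyclicity of the trace gives $\langle D_\delta V D_\delta,\Theta\rangle=\langle V,D_\delta\Theta D_\delta\rangle$; hence
\[
  \E\bigl[\,e^{i\langle W,\Theta\rangle}\,\big|\,\diag W=\diag(\delta)\,\bigr]=\varphi_V\!\bigl(D_\delta\Theta D_\delta\bigr),
\]
which is continuous in $\delta$ and equals $\varphi_V(\Theta)$ at the point $\delta=(1,\dots,1)$.

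Next, let $g$ be the joint density of $\diag W$ on $\R^n$, i.e.\ the product of $n$ i.i.d.\ $\chi^2(d)$ densities, so that $g(1,\dots,1)=\bigl(2^{d/2}\Gamma(d/2)e^{1/2}\bigr)^{-n}$, and set $h_\Theta(\delta):=\varphi_V(D_\delta\Theta D_\delta)\,g(\delta)$ on $(0,\infty)^n$ and $0$ elsewhere. By the previous step, $h_\Theta$ is the density (with respect to Lebesgue measure on $\R^n$) of the finite complex measure $B\mapsto\E\bigl[e^{i\langle W,\Theta\rangle}\one\{\diag W\in B\}\bigr]$; finiteness holds because its total variation is at most $\E\, e^{-\langle W,\Im\Theta\rangle}=\det(\Id+\Im\Theta)^{-d/2}<\infty$ under the standing hypothesis $\Id+\Im\Theta\succ0$ (the Gaussian computation from the proof of Lemma \ref{lem:Wishart}). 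Identifying $\Diag$ with $\R^n$ via its standard orthonormal basis $\{E_{11},\dots,E_{nn}\}$ — equivalently, noting that $\langle\cdot,\cdot\rangle$ restricts to the Euclidean inner product on $\Diag$ — and using $\langle W,\Theta+\Delta\rangle=\langle W,\Theta\rangle+\langle\diag W,\Delta\rangle$ for real $\Delta\in\Diag$, we find
\[
  \int_{\R^n}h_\Theta(\delta)\, e^{i\langle\delta,\Delta\rangle}\,\mathrm{d}\delta=\E\, e^{i\langle W,\Theta+\Delta\rangle}=\varphi_W(\Theta+\Delta)=\det\!\bigl(\Id-i(\Theta+2\Delta)\bigr)^{-d/2},
\]
the last equality by Lemma \ref{lem:Wishart}, which applies for every real $\Delta$ since $\Id+\Im\bigl(\Theta+\Delta+\diag(\Theta+\Delta)\bigr)=\Id+\Im\Theta\succ0$. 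Thus $\varphi_W(\Theta+\cdot)$ is a version of the Fourier transform of $h_\Theta$; Fourier inversion, evaluated at $(1,\dots,1)$ and using $e^{-i\langle(1,\dots,1),\Delta\rangle}=\etr(-i\Delta)$, gives $\varphi_V(\Theta)\, g(1,\dots,1)=h_\Theta(1,\dots,1)=(2\pi)^{-n}\int_{\Diag}\etr(-i\Delta)\,\varphi_W(\Theta+\Delta)\,\mathrm{d}\Delta$, and dividing by $g(1,\dots,1)$ and collecting the constant produces the stated identity.

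The one delicate point — and the only place the hypothesis $d\ge3$ is used — is verifying that $\Delta\mapsto\varphi_W(\Theta+\Delta)$ lies in $L^1(\Diag)$, which is what both legitimizes the inversion and ensures $h_\Theta$ is a genuine continuous density one can evaluate pointwise at $(1,\dots,1)$. To see this, expand $\det(\Id-i(\Theta+2\Delta))$ as a polynomial that is affine in each of $\Delta_{11},\dots,\Delta_{nn}$, with top term $(-2i)^n\prod_j\Delta_{jj}$; its coefficients are, up to powers of $-2i$, determinants of principal submatrices of $\Id-i\Theta$, each of which is nonzero because the corresponding principal submatrix of $\Id+\Im\Theta$ is positive definite (again the Gaussian computation above). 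A short compactness argument then upgrades this non--vanishing to the lower bound $\bigl|\det\!\bigl(\Id-i(\Theta+2\Delta)\bigr)\bigr|\gtrsim\prod_j\bigl(1+|\Delta_{jj}|\bigr)$, so that $|\varphi_W(\Theta+\Delta)|\lesssim\prod_j(1+|\Delta_{jj}|)^{-d/2}$, which is integrable over $\R^n$ exactly when $d>2$. Granting this estimate, the differentiability of $h_\Theta$ and the validity of pointwise Fourier inversion at $(1,\dots,1)$ are standard.
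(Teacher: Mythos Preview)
Your proof is correct and, at the conceptual level, rests on the same idea as the paper's: recover $\varphi_V$ from $\varphi_W$ by Fourier inversion in the diagonal variables. The execution, however, is genuinely different. The paper conditions $W$ on the event $\{\diag W\in\Id+\Diag_t\}$, rewrites the indicator of that box via a $\sinc$/Fourier identity, manipulates the resulting iterated integrals, and only then sends $t\to 0$ using dominated convergence. You instead go straight to the point $t=0$: you identify the conditional characteristic function $\E[e^{i\langle W,\Theta\rangle}\mid\diag W=\diag(\delta)]=\varphi_V(D_\delta\Theta D_\delta)$, observe that $h_\Theta=\varphi_V(D_\delta\Theta D_\delta)\,g(\delta)$ is an $L^1$ density whose Fourier transform is $\varphi_W(\Theta+\cdot)$, and invoke the standard $L^1$ Fourier inversion theorem at the continuity point $\delta=(1,\dots,1)$. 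This is cleaner and more conceptual; the paper's approximation argument buys nothing extra here, since both routes ultimately need the same integrability of $\Delta\mapsto\varphi_W(\Theta+\Delta)$ for $d\ge 3$.

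Your justification of that $L^1$ bound also differs from the paper's. The paper factors $\det(\Id-i\Theta-2i\Delta)=\det(\Id-2i\Delta)\det(\Id-i\Theta(\Id-2i\Delta)^{-1})$ and bounds the second factor away from zero for $\Delta$ outside a compact set. Your argument---multilinearity in the $\Delta_{jj}$, non--vanishing of all principal minors of $\Id-i\Theta$ (via positive definiteness of the corresponding principal submatrices of $\Id+\Im\Theta$), and a compactness step---yields the slightly sharper global bound $|\varphi_W(\Theta+\Delta)|\lesssim\prod_j(1+|\Delta_{jj}|)^{-d/2}$. The compactness step is not entirely trivial (one convenient way to carry it out is to note that $\det(\Id-i(\Id-2i\Delta)^{-1}\Theta)$ extends continuously to the compactification $[-\infty,\infty]^n$ of $\Diag$, with boundary values given precisely by the principal minors you identified), but it is correct.

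One small remark: your computation produces the constant $\bigl(\Gamma(d/2)\,2^{d/2}e^{1/2}/(2\pi)\bigr)^n$, which is what the paper's own proof obtains and what is used downstream in Lemma~\ref{mainchardef}; the $\pi$ (rather than $2\pi$) in the displayed statement appears to be a typo.
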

\begin{proof}
  Define the law of the random matrix $W_t$ to be that of $W$ conditioned on the event that all entries of $D$ in \eqref{law} are between $\sqrt{(1-t)_+}$ and $\sqrt{1+t}$.
  We can determine $\varphi_V$ by determining the characteristic function for $W_t-\Id$ and sending $t\to0$. 
  By the independence of $V$ from $D$, we have that $W_t \Wkto \Id+V$ as $t\to0$.  Moreover, as $W_t$ are bounded random variables, we have that for all $\Theta \in \C \otimes \Offdiag$ (which we recall have no entries on the diagonal),
  \[
    \Exp \exp( i\langle W_t, \Theta\rangle )
    \to
    \Exp \exp( i\langle V, \Theta\rangle )
    \eqqcolon\varphi_V(\Theta)
  \]
  as $t \to 0$.

  We suppose going forward that $\Theta\in{\C \otimes \Offdiag}$ has $\Id + \Im \Theta \succ 0$.
  Let $C_t$ be the probability that any particular diagonal entry of $W-\Id$ is in the interval $(-t,t)$.
  Since the diagonal entries of $W$ are i.i.d.\ $\chi^2(d)$, we have for $t \in [0,1]$ 
  \[
    C_t=\int_{1-t}^{1+t}
    \frac{x^{d/2-1}e^{-x/2}}{2^{d/2}\Gamma(d/2)}\mathrm{d}x
    =
    \frac{2^{1-d/2}}{\sqrt{e}\Gamma(d/2)}t+O(t^3)
  \]
  as $t \to 0.$

  By definition, we can express the Fourier--Laplace transform of $W_t$ at $\Theta$
  for any $t \in [0,1]$ as
  \begin{align}\label{tdub}
    \varphi_{W_t}(\Theta)&=
    \int_{\Diag}
    \int_{\Offdiag}
    \one_{\Diag_t}(D^2-\Id)
    \exp\left(i\langle\Theta, DVD\rangle\right) 
    \det(D^2)^{d/2-1}\etr(-D^2/2)\frac{\vartheta(\mathrm{d}V)\mathrm{d}(D^2)}{2^{nd/2}\Gamma(d/2)^n
    C_t^n},
  \end{align}
  where we have let $\vartheta$ represent the law of the spherical Wishart matrix.
  Recall that for Lebesgue--a.e.\ $D^2 \in \Diag$ 
  \[
    \one_{\Diag_t}(D^2-\Id)
    =
    \lim_{a\to\infty} 
    \int_{\Diag_t+\Id}
    (a/\pi)^n\sinc( a(D^2-X))\mathrm{d} X.
  \]
  Applying bounded convergence (with respect to integration against the law of $D^2$),
  \begin{equation}\label{tdub2}
    \varphi_{W_t}(\Theta)=
    \lim_{a\to\infty}
    \int\limits_{\Diag}
    \int\limits_{\Diag_t+\Id}
    \int\limits_{\Offdiag}
    \exp\left(i\langle\Theta, DVD\rangle\right) 
    \det(D^2)^{d/2-1}\etr(-D^2/2)
    (a/\pi)^n\sinc( a(D^2-X))
    \frac{\vartheta(\mathrm{d}V)\mathrm{d} X
    \mathrm{d}(D^2)}
    { 
    2^{nd/2}\Gamma(d/2)^n
    C_t^n
    }.
  \end{equation}
  We may now interchange these integrals freely, as they represent an expectation of a bounded random variable against a finite measure, and so we may bring the $\mathrm{d}X$ as the most interior.
  We then change the sinc integral using Fourier inversion to produce
  \[
    \begin{aligned}
    \int_{\Diag_t+\Id}
    (a/\pi)^n\sinc( a(D^2-X))\mathrm{d} X
    &=
    \prod_{j=1}^n
    \int\limits_{-a}^a
    \int_{-\infty}^{\infty}
    \frac{1}{2\pi}\exp( i \Delta_{jj}(D_{jj}^2-X_{jj}))
    \one_{X_{jj} \in [1-t,1+t]}
    \mathrm{d}X_{jj}
    \mathrm{d}\Delta_{jj} \\
    &=
    \prod_{j=1}^n
    \int\limits_{-a}^a
    \exp( i \Delta_{jj}(D_{jj}^2-1))
    \frac{\sin(t \Delta_{jj})}{ \pi \Delta_{jj}}
    \mathrm{d}\Delta_{jj} \\
    &=
    \int\limits_{\Diag_a}
    \exp( i \langle \Delta, D^2 -\Id \rangle)
    ( t/\pi)^n \sinc(t\Delta)
    \mathrm{d}\Delta,
    \end{aligned}
  \]
  from which we conclude (interchanging integrals once more)
  \begin{equation}\label{tdub3}
    \begin{aligned}
    \varphi_{W_t}(\Theta)&=
    \lim_{a\to\infty} \\
    &\int\limits_{\Diag_a}
    \int\limits_{\Offdiag}
    \int\limits_{\Diag} 
    ( t/\pi)^n \sinc(t\Delta)
    \exp\left(i\langle\Delta + \Theta, D(V+\Id)D - \Id\rangle\right) 
    \det(D^2)^{d/2-1}\etr(-D^2/2)
    \frac{ \vartheta(\mathrm{d}V)
      \mathrm{d}(D^2)
      \mathrm{d}\Delta
    }
    { 
      2^{nd/2}\Gamma(d/2)^n
      C_t^n
    }.
  \end{aligned}
  \end{equation}
  In summary, we have the representation
  \begin{equation*}
    \varphi_{W_t}(\Theta) 
    =
    \lim_{a\to\infty}
    C_t^{-n}
    \int\limits_{\Diag_a}
    ( t/\pi)^n \sinc(t\Delta)
    \etr(-i\Delta)
    \varphi_W(\Theta+\Delta)
    \mathrm{d}\Delta
  \end{equation*}
  The characteristic function $\varphi_W(\Theta)$ becomes integrable with respect to Lebesgue measure on $\Diag$ as soon as $d \geq 3.$  This can be seen as
  \[
    |\varphi_W(\Theta+\Delta)|
    =|\det(\Id -i\Theta -2i\Delta)|^{-d/2}
    =|\det(\Id-2i\Delta)|^{-d/2}|\det(\Id-i\Theta(\Id-2i\Delta)^{-1})|^{-d/2}.
  \]
  Thus outside of some compact set of $\Delta$ there is a $C(\Theta)>0$ so that
  \[
    |\varphi_W(\Theta+\Delta)|
    \leq C|\det(\Id -2i\Delta)|^{-d/2}.
  \]
  It follows that we may then take $a \to \infty$ to conclude
    \begin{equation}\label{eq:pF}
    \varphi_{W_t}(\Theta) 
    =
    C_t^{-n}
    \int_{\Diag}
    ( t/\pi)^n \sinc(t\Delta)
    \etr(-i\Delta)
    \varphi_W(\Theta+\Delta)
    \mathrm{d}\Delta.
  \end{equation}
  Taking the limit as $t\to 0,$ from dominated convergence, we have $\sinc(t\Delta) \to 0$ and so
  \[
    \varphi_{W_t}(\Theta) 
    \to
    \biggl(\frac{\Gamma(d/2)2^{d/2}e^{1/2}}{2\pi}
    \biggr)^n
    \int_{\Diag}
    \etr(-i\Delta)
    \varphi_W(\Theta+\Delta)
    \mathrm{d}\Delta.
  \]

\end{proof}

Since the characteristic function $\varphi_V$ is entire, we note that it is possible to deform the contour of integration in the previous representation to give an analytic continuation to all $\Theta$.

\begin{lemma}\label{mainchardef} 
  For $d\geq3$, $\Theta\in{\C\otimes\Offdiag}$ and $\lambda>0$ so that $\lambda \Id + \frac{1}{d}\Im \Theta \succ 0$ the characteristic function of $V$ is given by 
\begin{align}\label{generaldef}
\varphi_V(\Theta)=\left(\frac{\Gamma(d/2+1)}{2\pi i(d/2)^{\frac{d}{2}}}\right)^n\int_{\lambda \Id+i\Diag}\left(\frac{\etr Z}{\det Z}\right)^{\frac{d}{2}}\det\left(\Id-\frac{i}{d}\Theta Z^{-1}\right)^{-\frac{d}{2}}\mathrm{d}Z. 
\end{align}
\end{lemma}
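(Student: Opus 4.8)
The plan is to obtain \eqref{generaldef} from the representation in Lemma \ref{chardef} by an affine change of variables, then to deform the resulting contour in $\lambda$, and finally to analytically continue in $\Theta$.

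\emph{Step 1 (change of variables).} Take $\Theta \in \C\otimes\Offdiag$ with $\Id + \Im\Theta \succ 0$ and substitute the explicit Wishart characteristic function into Lemma \ref{chardef}. Since $\diag\Theta = 0$ and $\Delta \in \Diag$, we have $\diag(\Theta+\Delta) = \Delta$, so $\varphi_W(\Theta+\Delta) = \det(\Id - i\Theta - 2i\Delta)^{-d/2}$. I then set $Z := \tfrac1d(\Id - 2i\Delta)$, a diagonal complex matrix with $\Re Z = \tfrac1d\Id$; as $\Delta$ ranges over $\Diag$, $Z$ traces $\tfrac1d\Id + i\Diag$. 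From $\Delta = \tfrac i2(dZ - \Id)$ one gets $\etr(-i\Delta) = e^{-n/2}(\etr Z)^{d/2}$ and $\det(\Id - i\Theta - 2i\Delta) = \det(dZ)\,\det(\Id - \tfrac id\Theta Z^{-1})$, while the coordinatewise Jacobian, once the orientation of the contour is accounted for, is $\mathrm{d}\Delta_{jj} = \tfrac{d}{2i}\,\mathrm{d}Z_{jj}$. Collecting the prefactor of Lemma \ref{chardef} with $e^{-n/2}$, with $d^{-nd/2}$ coming from $\det(dZ)^{-d/2} = d^{-nd/2}\det(Z)^{-d/2}$, and with $(d/(2i))^n$ from the Jacobian, and simplifying using $\Gamma(d/2+1) = (d/2)\Gamma(d/2)$ and $(d/2)^{d/2} = d^{d/2}2^{-d/2}$, the constant collapses exactly to $(\Gamma(d/2+1)/(2\pi i(d/2)^{d/2}))^n$. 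This proves \eqref{generaldef} with $\lambda = 1/d$ for every $\Theta$ with $\Id + \Im\Theta \succ 0$.

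\emph{Step 2 (moving $\lambda$).} Write $h(Z) = (\etr Z/\det Z)^{d/2}\det(\Id - \tfrac id\Theta Z^{-1})^{-d/2}$. On $Z_{jj} = \lambda + it_j$ one has $|\etr Z| = e^{n\lambda}$, $|\det Z|^{-d/2} = \prod_j(\lambda^2+t_j^2)^{-d/4}$ which is integrable in each $t_j$ since $d \geq 3$, and $\det(\Id - \tfrac id\Theta Z^{-1})^{-d/2} \to 1$ coordinatewise at infinity; hence the integral converges absolutely. The admissible set $\{\lambda > 0 : \lambda\Id + \tfrac1d\Im\Theta \succ 0\}$ is an interval, so between any two admissible values I can move one coordinate of $Z$ at a time, and a standard truncated-rectangle (Cauchy) estimate gives invariance of the integral, provided $h$ is holomorphic across the region swept out. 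For this, and for single-valuedness of the branch $\det(\cdot)^{-d/2} = \exp(-\tfrac d2\tr\log(\cdot))$, I check that $\Id - \tfrac id\Theta Z^{-1}$ never has an eigenvalue in $(-\infty,0]$: an eigenvalue $\mu \le 0$ would force $(1-\mu)Z - \tfrac id\Theta$ to be singular, impossible because its real (Hermitian) part is $(1-\mu)\Re Z + \tfrac1d\Im\Theta \succeq \Re Z + \tfrac1d\Im\Theta \succ 0$ ($\Re Z$ being a positive diagonal matrix dominating an admissible multiple of $\Id$), and a complex symmetric matrix with positive definite real part is invertible. This branch coincides at $\lambda = 1/d$ with the one implicit in Lemma \ref{chardef}, so \eqref{generaldef} holds for all admissible $\lambda$ whenever $\Id + \Im\Theta \succ 0$.

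\emph{Step 3 (continuing in $\Theta$).} Fix an admissible $\lambda$. The right-hand side of \eqref{generaldef} is holomorphic in $\Theta$ on the convex (hence connected) open set $S_\lambda := \{\Theta \in \C\otimes\Offdiag : \lambda\Id + \tfrac1d\Im\Theta \succ 0\}$: the integrand is $\Theta$-holomorphic there by the same eigenvalue estimate, and the bound $|h| \le C\prod_j(\lambda^2+t_j^2)^{-d/4}$ holding locally uniformly in $\Theta$ justifies differentiating under the integral. Since $\varphi_V$ is entire and the two sides of \eqref{generaldef} agree on the nonempty open set $S_\lambda \cap \{\Id + \Im\Theta \succ 0\}$ by Steps 1--2, the identity theorem for holomorphic functions of several variables gives equality on all of $S_\lambda$, which is the assertion. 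The main obstacle is keeping track of the branch of $\det(\Id - \tfrac id\Theta Z^{-1})^{-d/2}$: one must verify that it remains single-valued and holomorphic both as the contour is deformed in $Z$ and as $\Theta$ is continued, and that it is consistent with the Wishart determinant power once the substitution is undone --- which is precisely the content of the positive-real-part estimate above.
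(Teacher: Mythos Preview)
Your proof is correct and follows essentially the same route as the paper: substitute the Wishart formula into Lemma~\ref{chardef}, make the affine change $Z=\tfrac1d(\Id-2i\Delta)$ to land on $\tfrac1d\Id+i\Diag$, shift the real part of the contour to $\lambda$, and then analytically continue in $\Theta$; your Hermitian--part argument for the nonvanishing of $\det(Z-\tfrac id\Theta)$ is if anything cleaner than the paper's version, which works with real $\Theta$ first. The only soft spot is the integrability justification in Step~2: the assertion that $\det(\Id-\tfrac id\Theta Z^{-1})^{-d/2}\to 1$ ``coordinatewise at infinity'' is not literally true (sending one $|Z_{jj}|\to\infty$ yields the corresponding determinant on the $(n-1)\times(n-1)$ minor, not $1$), so you still owe a uniform bound or an iterated one--variable argument --- but the paper itself simply asserts $L^1$ here, and the required estimate is routine once you write $h(Z)=(\etr Z)^{d/2}\det(Z-\tfrac id\Theta)^{-d/2}$.
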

\begin{proof} 
  It suffices to show the identity for $\Im \Theta \succeq 0,$ for having done so, we have given a representation which is analytic in this domain and agrees with $\varphi_V$, which is entire.  Thus, by the Identity Theorem, we can extend to the domain of analyticity of the representation, which is $\lambda\Id+\frac{1}{d}\Image\Theta\succ0$. 

  Using Lemma \ref{chardef}
\begin{align*}
\varphi_V(\Theta)
=\left(\frac{\Gamma(d/2)2^{d/2}e^{1/2}}{2\pi}\right)^n\int_{\Diag}\frac{\etr(-i\Delta)
}{\det\big(\Id-2i\Delta-i\Theta\big)^{\frac{d}{2}}}\mathrm{d}\Delta 
=\left(\frac{\Gamma(d/2)}{2\pi}\right)^n\int_{\Diag}\frac{\etr\big(\frac{1}{2}\Id-i\Delta\big)}{\det\big(\frac{1}{2}\Id-i\Delta-\frac{i}{2}\Theta\big)^{\frac{d}{2}}}\mathrm{d}\Delta. 
\end{align*} 
By making the change of variables $\frac{1}{2}\Id-i\Delta=\frac{d}{2}Z$ we have
\begin{align*}
\varphi_V(\Theta)=\left(-\frac{\Gamma(d/2+1)}{2\pi i\left(d/2\right)^{\frac{d}{2}}}\right)^n\int_{d^{-1}\Id-i\Diag}\left(\frac{\etr Z}{\det Z}\right)^{\frac{d}{2}}\det\left(\Id-\frac{i}{d}\Theta Z^{-1}\right)^{-\frac{d}{2}}\mathrm{d}Z,
\end{align*}
where $d^{-1}\Id-i\Diag$ is the set $\{Z\in\D\oplus i\D:\Re Z=d^{-1}\Id\}$ with each coordinate oriented so as to move from $i\infty$ to $-i\infty$. By reversing the orientation of each coordinate, we introduce a factor of $(-1)^n$ and have 
\begin{align*}
\varphi_V(\Theta)=\left(\frac{\Gamma(d/2+1)}{2\pi i\left(d/2\right)^{\frac{d}{2}}}\right)^n\int_{d^{-1}\Id+i\Diag}\left(\frac{\etr Z}{\det Z}\right)^{\frac{d}{2}}\det\left(\Id-\frac{i}{d}\Theta Z^{-1}\right)^{-\frac{d}{2}}\mathrm{d}Z.
\end{align*}
At this point, the only difference between this formula and the one in the statement of the lemma is the real part of the path over which we are integrating. We will utilize Lemma \ref{worldmover} to get us to the desired formula. In the context of Lemma \ref{worldmover}, the function we are trying to integrate is
$$g(Z):=\left(\frac{\etr Z}{\det Z}\right)^{\frac{d}{2}}\det\left(\Id-\frac{i}{d}\Theta Z^{-1}\right)^{-\frac{d}{2}}=\left(\etr Z\right)^{\frac{d}{2}}\det\left(Z-\frac{i}{d}\Theta\right)^{-\frac{d}{2}},$$
our initial path is $\alpha(x)=d^{-1}+ix$, and our target path is $\beta(x)=\lambda+ix$. By the Spectral Theorem, $i\Image Z-\frac{i}{d}\Theta$ is guaranteed to have purely imaginary eigenvalues and thus $Z-\frac{i}{d}\Theta$ is invertible as long as $\Re Z>0$ for all $j$, and as long as this matrix is invertible, $g$ is holomorphic. Therefore the image of the smooth homotopy $H(x,y):=d^{-1}+(\lambda-d^{-1})y+ix$ stays within $g$'s domain of holomorphicity. It's clear that $\frac{\partial H}{\partial y}$ is uniformly bounded. For $1\leq k\leq n$, we have $$\int_{\alpha^{n-k}\times\beta^k}|g(Z)|\mathrm{d}Z=e^{\frac{n-k}{2}+\frac{kd\lambda}{2}}\int_{\alpha^{n-k}\times\beta^k}\det\left((\Re Z)^2+\left(\Image Z-\frac{1}{d}\Theta\right)^2\right)^{-\frac{d}{4}}\mathrm{d}Z<\infty$$
as long as $d\geq3$. We can also see that, if each coordinate of $\Re Z$ is between $d^{-1}$ and $\lambda$, then $g$ tends to 0 as the imaginary part of any one of its coordinates tends to $\pm\infty$. So, by Lemma \ref{worldmover}, this path deformation can be done, and this completes the proof. 
\end{proof}
We shall need bounds on the modulus of the characteristic function for large $\Theta$.  This we can do when $d$ is much larger than $n.$
The next corollary bounds $|\varphi_V|$ for such $\Theta$ when $\lambda=1$.
\begin{corollary}\label{absbound}
For $d>2n$ and $\Theta\in\C \otimes {\Offdiag}$ with $d\Id + \Image\Theta\succ 0$,
we have
$$|\varphi_V(\Theta)|\leq\left(\frac{\Gamma(d/2+1)e^{\frac{d}{2}}}{2\sqrt{\pi} (d/2)^{\frac{d}{2}}}\right)^n\frac{\Gamma\left(\frac{d-2n}{4}\right)}{\Gamma(d/4)}\frac{\|\Id+d^{-1}\Image\Theta\|_{\op}^n}{\det(\Id+d^{-1}\Image\Theta)^{\frac{d}{2}}}\left(1+\frac{\left\|\Re\Theta\right\|_{\Fr}^2}{\|d\Id+\Image\Theta\|_{\op}^2}\right)^{\frac{n}{2}-\frac{d}{4}}.$$ 
\end{corollary}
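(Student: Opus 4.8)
The plan is to start from the contour representation of Lemma \ref{mainchardef} with $\lambda = 1$, so that
\[
  \varphi_V(\Theta)=\left(\frac{\Gamma(d/2+1)}{2\pi i(d/2)^{d/2}}\right)^n\int_{\Id+i\Diag}\left(\frac{\etr Z}{\det Z}\right)^{d/2}\det\left(\Id-\tfrac{i}{d}\Theta Z^{-1}\right)^{-d/2}\mathrm{d}Z,
\]
and then bound the integrand in modulus on the contour $\{Z = \Id + i\Delta : \Delta \in \Diag\}$. First I would substitute $Z = \Id + i\Delta$ and compute the three factors separately. The factor $\left|\etr Z\right|^{d/2} = e^{nd/2}$ is constant. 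For $\left|\det Z\right|^{-d/2}$ we have $\det Z = \prod_j (1 + i\Delta_{jj})$, so $\left|\det Z\right|^{-d/2} = \prod_j (1 + \Delta_{jj}^2)^{-d/4}$. The third factor is the one requiring care: writing $\det(\Id - \tfrac i d \Theta Z^{-1})^{-d/2} = \det(Z - \tfrac i d \Theta)^{-d/2}\det(Z)^{d/2}$ would double-count, so instead I would keep it as is and bound $\left|\det(\Id - \tfrac i d \Theta Z^{-1})\right|$ from below. Here $Z^{-1}$ is diagonal with entries $(1+i\Delta_{jj})^{-1}$, and $\Theta = \Re\Theta + i\Im\Theta$; the matrix $\Id - \tfrac i d \Theta Z^{-1}$ equals $Z^{-1}(Z - \tfrac i d \Theta)$, and $Z - \tfrac i d \Theta = (\Id + \tfrac 1 d \Im\Theta) + i(\Delta - \tfrac 1 d \Re\Theta)$. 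Since $\Id + \tfrac 1 d \Im \Theta \succ 0$ by hypothesis and $\Delta - \tfrac 1 d \Re\Theta$ is real symmetric, this is of the form $P + iQ$ with $P \succ 0$, $Q$ real symmetric, and a standard lemma gives $\left|\det(P+iQ)\right|^2 = \det(P^2 + \text{something})$ — more precisely $\left|\det(P+iQ)\right| \geq \det(P)$ when $P \succ 0$, $Q = Q^T$. I would use this to get $\left|\det(Z - \tfrac i d \Theta)\right| \geq \det(\Id + \tfrac 1 d \Im\Theta)$, but to capture the Gaussian-type decay in $\Re\Theta$ and $\Delta$ I need the sharper bound $\left|\det(P+iQ)\right|^2 = \det(P)\det(P + Q P^{-1} Q) \geq \det(P)^2 \det(\Id + (P^{-1/2}QP^{-1/2})^2)$, of which I would only retain enough to produce the factor $\bigl(1 + \|\Re\Theta\|_{\Fr}^2 / \|d\Id + \Im\Theta\|_{\op}^2\bigr)^{n/2 - d/4}$ after integrating over $\Delta$.

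After these reductions the bound becomes, up to the prefactor $\left(\Gamma(d/2+1)/(2\pi (d/2)^{d/2})\right)^n e^{nd/2}$ and the factor $\det(\Id + \tfrac 1 d \Im\Theta)^{-d/2}$, an integral over $\Delta \in \Diag \cong \R^n$ of a product over coordinates of $(1+\Delta_{jj}^2)^{-d/4}$ times a term controlling the $\|\Re\Theta\|_{\Fr}$ dependence. The $\|\Id + d^{-1}\Im\Theta\|_{\op}^n$ in the conclusion should come from bounding $\left|\det(\Id - \tfrac i d \Theta Z^{-1})\right|$ from below by a product in which one eigenvalue direction is handled crudely via the operator norm of $\Id + d^{-1}\Im\Theta$, giving one power of that norm per dimension. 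The remaining integral $\int_{\R^n}\prod_j (1 + \Delta_{jj}^2)^{-d/4}\,\mathrm{d}\Delta = \left(\int_\R (1+x^2)^{-d/4}\,\mathrm{d}x\right)^n = \left(B(1/2, d/4 - 1/2)\right)^n = \left(\sqrt{\pi}\,\Gamma(d/4 - 1/2)/\Gamma(d/4)\right)^n$; this is where the ratio $\Gamma((d-2n)/4)/\Gamma(d/4)$ — note $\Gamma((d-2n)/4)$ not $\Gamma(d/4 - 1/2)^n$ — must emerge, so the integral over $\Delta$ cannot be done one coordinate at a time after the $\|\Re\Theta\|$ factor is attached. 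Instead I would bound the $\Re\Theta$-factor by something that, combined with the $(1+\Delta_{jj}^2)^{-d/4}$ product, forms a spherically-symmetric or Schur-type integrand in $\Delta$, producing a single gamma ratio with the full $n$-fold shift. Concretely: $(1 + \|\Re\Theta\|_{\Fr}^2/\|d\Id+\Im\Theta\|_{\op}^2 + \|\Delta\|_{\Fr}^2\cdot(\dots))^{-d/4 + n/2}$-type bounds, integrated in polar coordinates on $\R^n$, give $\int_0^\infty r^{n-1}(1 + a + r^2)^{-(d/4 - n/2)}\,\mathrm{d}r$, and the substitution $u = r^2/(1+a)$ yields a Beta integral producing exactly $\Gamma(n/2)\Gamma((d-2n)/4 + \text{const})\big/\Gamma(d/4)$ type factors with the stated $(1+a)^{n/2 - d/4}$.

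The main obstacle I anticipate is organizing the lower bound on $\left|\det(\Id - \tfrac i d \Theta Z^{-1})\right|$ so that it simultaneously (i) is sharp enough to yield the Gaussian decay $\bigl(1 + \|\Re\Theta\|_{\Fr}^2/\|d\Id+\Im\Theta\|_{\op}^2\bigr)^{n/2 - d/4}$, (ii) produces the clean gamma ratio $\Gamma((d-2n)/4)/\Gamma(d/4)$ rather than a messier expression, and (iii) does not lose more than the single allowed power $\|\Id + d^{-1}\Im\Theta\|_{\op}^n$ in the cross-terms between $\Im\Theta$ and the integration variable $\Delta$. This means one must simplify the matrix $(\Id + \tfrac 1 d \Im\Theta) + i(\Delta - \tfrac 1 d \Re\Theta)$ by (after a real congruence by $(\Id + \tfrac 1 d \Im\Theta)^{-1/2}$, which is where $\|\Id + d^{-1}\Im\Theta\|_{\op}$ and $\det(\Id + d^{-1}\Im\Theta)$ enter) reducing to $\Id + i M$ with $M$ real symmetric, then using $\left|\det(\Id + iM)\right| = \prod_j(1 + m_j^2)^{1/2} \geq (1 + \tfrac 1 n \sum_j m_j^2)^{1/2}$ or a trace-based convexity bound to pass from the spectrum of $M$ to $\|M\|_{\Fr}^2 = \tr M^2$, and finally splitting $\tr M^2$ into the contribution of $\Delta$ (to be integrated) and of $\Re\Theta$ (to be kept, after bounding $\|\Re\Theta\|$ under congruence by $\|d\Id + \Im\Theta\|_{\op}$). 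Getting the exponent bookkeeping in (i)–(iii) to land exactly on the stated formula, rather than merely on a bound of the same shape, is the delicate part; everything else is routine substitution and a Beta-function evaluation.
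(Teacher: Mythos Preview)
Your plan is essentially the paper's proof. The paper starts from Lemma~\ref{mainchardef} with $\lambda=1$, combines $\det(Z)^{-d/2}\det(\Id-\tfrac{i}{d}\Theta Z^{-1})^{-d/2}=\det(Z-\tfrac{i}{d}\Theta)^{-d/2}$, writes $Z-\tfrac{i}{d}\Theta=(\Id+d^{-1}\Im\Theta)+i(\Delta-d^{-1}\Re\Theta)$, performs the congruence by $(\Id+d^{-1}\Im\Theta)^{-1/2}$, and then uses the elementary bound $|\det(\Id+iM)|^2=\det(\Id+M^2)=\prod_j(1+m_j^2)\geq 1+\sum_j m_j^2=1+\|M\|_{\Fr}^2$ (your ``$1+\tfrac1n\sum m_j^2$'' variant is a little too weak to land exactly on the stated constants); the split of $\|M\|_{\Fr}^2$ into the $\Delta$-part and the $\Re\Theta$-part is exactly the Frobenius orthogonality of diagonal and off-diagonal matrices, after which the polar-coordinate/Beta evaluation $\int_{\R^n}(1+|x|^2)^{-d/4}\,dx=\pi^{n/2}\Gamma((d-2n)/4)/\Gamma(d/4)$ gives the gamma ratio and the $\|\Id+d^{-1}\Im\Theta\|_{\op}^n$ comes out as the Jacobian of the rescaling, just as you anticipated.
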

\begin{proof}
We start with the formula we just derived with $\lambda=1$. Assuming $d \Id+\Image\Theta\succ0$, we have 
\begin{align*}
|\varphi_V(\Theta)|&=\left|\left(\frac{\Gamma(d/2+1)}{2\pi i(d/2)^{\frac{d}{2}}}\right)^n\int_{\Id+i\Diag}\left(\frac{\etr Z}{\det Z}\right)^{\frac{d}{2}}\det\left(\Id-\frac{i}{d}\Theta Z^{-1}\right)^{-\frac{d}{2}}\mathrm{d}Z\right| \\ 
&\leq\left(\frac{\Gamma(d/2+1)\exp\left(\frac{d}{2}\right)}{2\pi (d/2)^{\frac{d}{2}}}\right)^n\det\left(\Id+d^{-1}\Image\Theta\right)^{-\frac{d}{2}}\int_{\Diag}\left|\det\left(\Id+i\frac{Z-d^{-1}\Re\Theta}{\Id+d^{-1}\Image\Theta}\right)\right|^{-\frac{d}{2}}\mathrm{d}Z. 
\end{align*}
When we write the fraction of matrices $A/B$ for positive definite $B,$ we define this to be $B^{-1/2}AB^{-1/2}$.  So in the instance above, the following symmetric matrix appears
$$\frac{Z-d^{-1}\Re\Theta}{\Id+d^{-1}\Image\Theta}=(\Id+d^{-1}\Image\Theta)^{-\frac{1}{2}}(Z-d^{-1}\Re\Theta)(\Id+d^{-1}\Image\Theta)^{-\frac{1}{2}}.$$ 
Thus we can bound this determinant in terms of the Frobenius and operator norms by writing  
\begin{align*}
\left|\det\left(\Id+i\frac{Z-d^{-1}\Re\Theta}{\Id+d^{-1}\Image\Theta}\right)\right|^2&=\det\left(\Id+\left(\frac{Z-d^{-1}\Re\Theta}{\Id+d^{-1}\Image\Theta}\right)^2\right) \\
&\geq1+\left\|\frac{Z-d^{-1}\Re\Theta}{\Id+d^{-1}\Image\Theta}\right\|_{\Fr}^2\geq1+\frac{\|Z\|_{\Fr}^2+d^{-2}\|\Re\Theta\|_{\Fr}^2}{\|\Id+d^{-1}\Image\Theta\|_{\op}^2},
\end{align*}
where in the last step we have used that $Z$ and $\Re \Theta$ are orthogonal in the Frobenius inner product.
For brevity, we will start writing $A:=\Id+d^{-1}\Image\Theta$ and $a:=\|A\|_{\op}$. We now have 
\begin{align*}
|\varphi_V(\Theta)|&\leq\left(\frac{\Gamma(d/2+1)\exp\left(\frac{d}{2}\right)}{2\pi(d/2)^{\frac{d}{2}}}\right)^n\det A^{-\frac{d}{2}}\int_\D\left(1+\frac{d^{-2}\|\Re\Theta\|_{\Fr}^2+\|Z\|_{\Fr}^2}{a^2}\right)^{-\frac{d}{4}}\mathrm{d}Z \\
&=\left(\frac{\Gamma(d/2+1)\exp\left(\frac{d}{2}\right)}{2\pi (d/2)^{\frac{d}{2}}}\right)^n\det A^{-\frac{d}{2}}\left(1+a^{-2}d^{-2}\left\|\Re\Theta\right\|_{\Fr}^2\right)^{-\frac{d}{4}}\int_\D\left(1+\frac{\|Z\|_{\Fr}^2}{a^2+d^{-2}\|\Re\Theta\|_{\Fr}^2}\right)^{-\frac{d}{4}}\mathrm{d}Z \\ 
&=\left(\frac{\Gamma(d/2+1)\exp\left(\frac{d}{2}\right)}{2\pi (d/2)^{\frac{d}{2}}}\right)^na^n\det A^{-\frac{d}{2}}\left(1+a^{-2}d^{-2}\left\|\Re\Theta\right\|_{\Fr}^2\right)^{\frac{n}{2}-\frac{d}{4}}\int_\D\left(1+\|Z\|_{\Fr}^2\right)^{-\frac{d}{4}}\mathrm{d}Z \\ &=\left(\frac{\Gamma(d/2+1)\exp\left(\frac{d}{2}\right)}{2\sqrt{\pi} (d/2)^{\frac{d}{2}}}\right)^n\frac{\Gamma\left(\frac{d-2n}{4}\right)}{\Gamma(d/4)}a^n\det A^{-\frac{d}{2}}\left(1+a^{-2}d^{-2}\left\|\Re\Theta\right\|_{\Fr}^2\right)^{\frac{n}{2}-\frac{d}{4}}. 
\end{align*}
\end{proof}

\begin{figure}
    \centering
    \floatbox[{\capbeside\thisfloatsetup{capbesideposition={right,top},capbesidewidth=5cm}}]{figure}[\FBwidth]
	{\caption{Homotopy (circular arcs) from $1+ i\R$ to the contour $u \mapsto u \cot u + i u$ (defined on 
        $(-\pi,\pi)$). The curve is asymptotic to $-\infty \pm i \pi$ at $u = \pm \pi$.
	This homotopy avoids the (shaded) region where the function loses analyticity, which is contained within the unit disc. 
         }
\label{fig}}
	{\includegraphics[trim=0cm 0 3.5cm 0,clip,width=4cm]{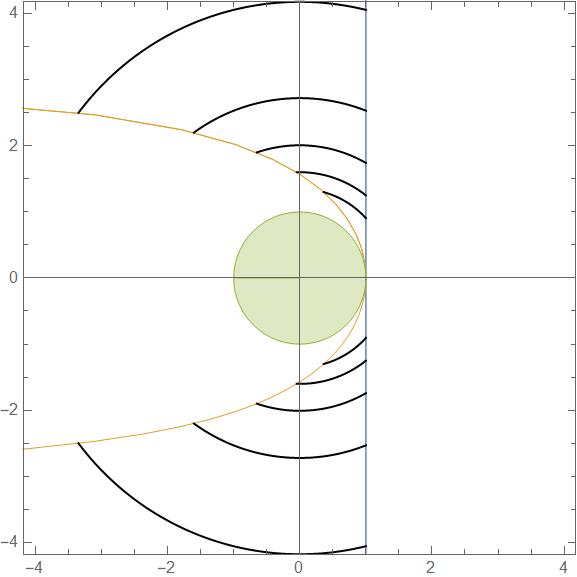}}
\end{figure}

\paragraph{Steepest descent.}
For $\Theta\in{\C \otimes \Offdiag}$ with $\|\Theta\|_{\op}<d$, we can deform $\lambda \Id+i\D$ in $(\ref{generaldef})$ with $\lambda=1$ to the contour $\beta$ parametrized by
$$\beta(U):=U\cot U+iU=\frac{e^{iU}}{\sinc U}$$
over $U\in\D_\pi$. To see this, note that, for $\|\Theta\|_{\op}<d$, all the singularities of 
$$Z\mapsto\left(\frac{\etr Z}{\det Z}\right)^{\frac{d}{2}}\det\left(\Id-\frac{i}{d}\Theta Z^{-1}\right)^{-\frac{d}{2}}$$ satisfy $\min_{j\in[n]}|Z_{jj}|=\|Z^{-1}\|^{-1}_{\op}<1$. Thus we can define a homotopy of the contour using circular arcs (see Figure \ref{fig}). 
The integral along these arcs is bounded by $\pi^ne^{nd}\sinc^{\frac{d}{2}-1}U$ which tends to 0 as any of the entries of $U$ tend to $\pm\pi$, assuming of course that $d\geq3$. 
Using this $\beta$, we now have  
\begin{align*}
\varphi_V(\Theta)=&\left(\frac{\Gamma(d/2+1)}{2\pi i(d/2)^{\frac{d}{2}}}\right)^n\int_\beta\left(\frac{\etr Z}{\det Z}\right)^{\frac{d}{2}}\det\left(\Id-\frac{i}{d}\Theta Z^{-1}\right)^{-\frac{d}{2}}\mathrm{d}Z \\ 
=&\left(\frac{\Gamma(d/2+1)}{2\pi i(d/2)^{\frac{d}{2}}}\right)^n\int_{\D_\pi}\left(\frac{\etr\beta(U)}{\det\beta(U)}\right)^{\frac{d}{2}}\det\left(\Id-\frac{i}{d}\Theta\beta(U)^{-1}\right)^{-\frac{d}{2}}\det\beta'(U)\mathrm{d}U. 
\end{align*}
Since $\Re\beta'(U)=\cot U-U\csc^2U$ is odd in each coordinate of $U$ and $\Image\beta'(U)=\Id$, it follows by setting $\Theta=0$ that 
\begin{align*}
\eta_d(U):=\left(\frac{\Gamma(d/2+1)}{2\pi (d/2)^{\frac{d}{2}}}\right)^n\left(\frac{\etr\beta(U)}{\det\beta(U)}\right)^{\frac{d}{2}}=\left(\frac{\Gamma(d/2+1)}{2\pi (d/2)^{\frac{d}{2}}}\right)^n\big(\etr(U\cot U)\det\sinc U\big)^{\frac{d}{2}}
\end{align*}
is a probability density on $\D_\pi$, 
and in particular $\eta_d(u):=\frac{\Gamma(d/2+1)}{2\pi(d/2)^{\frac{d}{2}}}\big(\exp(u\cot u)\sinc u\big)^{\frac{d}{2}}$ 
is a probability density on $(-\pi,\pi)$.
This proves the following lemma.
\begin{lemma}\label{expectationdef}
Let $U\in\D$ be a random diagonal matrix with density $\eta_d$. For $\Theta\in{\C \otimes \Offdiag}$ with $\|\Theta\|_{\op}<d$, 
$$\varphi_V(\Theta)=\E\left[\det\left(\Id-\frac{i}{d}\Theta \beta(U)^{-1}\right)^{-\frac{d}{2}}\det\beta'(U)\right]$$
where $\beta(x)=\frac{e^{ix}}{\sinc x}$.
\end{lemma}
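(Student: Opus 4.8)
This lemma is essentially a repackaging of the steepest-descent computation carried out in the paragraphs immediately above it, so the plan is to present that computation as a self-contained argument. First I would invoke Lemma \ref{mainchardef} with $\lambda = 1$: since $\|\Theta\|_{\op} < d$ gives $\|\Image\Theta\|_{\op} \leq \|\Theta\|_{\op} < d$ and hence $\Id + \tfrac1d\Image\Theta \succ 0$, the hypothesis of that lemma holds and
\[
  \varphi_V(\Theta)=\left(\frac{\Gamma(d/2+1)}{2\pi i(d/2)^{\frac{d}{2}}}\right)^n\int_{\Id+i\Diag}\left(\frac{\etr Z}{\det Z}\right)^{\frac{d}{2}}\det\left(\Id-\frac{i}{d}\Theta Z^{-1}\right)^{-\frac{d}{2}}\mathrm{d}Z.
\]
The one substantive step is to deform the coordinatewise vertical contour $\Id + i\Diag$ onto the contour $\beta$, $\beta(u) = e^{iu}/\sinc u = u\cot u + iu$ for $u\in(-\pi,\pi)$, along the circular-arc homotopy of Figure \ref{fig}. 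I would justify this with the contour-deformation lemma (Lemma \ref{worldmover}, as in the proof of Lemma \ref{mainchardef}), checking the three points indicated in the excerpt: (i) the integrand is holomorphic off the zero sets of $\det Z$ and of $\det(\Id - \tfrac id\Theta Z^{-1})$, and when $\|\Theta\|_{\op} < d$ any such singular diagonal $Z$ has $\min_j |Z_{jj}| = \|Z^{-1}\|_{\op}^{-1} \leq \|\Theta\|_{\op}/d < 1$; (ii) $|\beta(u)| = |\sinc u|^{-1} \geq 1$ on $(-\pi,\pi)$, so the arcs keep each coordinate in $\{|Z_{jj}| \geq 1\}$ and never meet a singularity; (iii) the arc integrals are $O(\pi^n e^{nd}(\det\sinc U)^{\frac d2 - 1})$, which vanishes as any coordinate of $U$ tends to $\pm\pi$ because $d \geq 3$, so the deformation introduces no boundary terms.

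Having made the deformation, I would parametrize $Z = \beta(U)$ with $U$ ranging over $\D_\pi$, use $e^{\beta(u)}/\beta(u) = e^{u\cot u}\sinc u$ to rewrite $\etr\beta(U)/\det\beta(U) = \etr(U\cot U)\det\sinc U$, and record the Jacobian $\mathrm{d}Z = \det\beta'(U)\,\mathrm{d}U$, where $\beta'(U)$ is the diagonal matrix with entries $\cot U_{jj} - U_{jj}\csc^2 U_{jj} + i$. Collecting the prefactor with the weight $(\etr(U\cot U)\det\sinc U)^{\frac d2}$ and the Jacobian produces an integral over $\D_\pi$ carrying the weight $\eta_d(U)\,\det\beta'(U)$. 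To confirm this is the correct normalization, I would specialize to $\Theta = 0$, where $\varphi_V(0) = 1$: because $(\etr(U\cot U)\det\sinc U)^{\frac d2}$ is a product of even functions while $\cot u - u\csc^2 u$ is odd, expanding $\det\beta'(U) = \prod_j(\cot U_{jj} - U_{jj}\csc^2 U_{jj} + i)$ kills every term of the product but the purely imaginary one, which fixes $\int_{\D_\pi}(\etr(U\cot U)\det\sinc U)^{\frac d2}\,\mathrm{d}U$ and shows that $\eta_d$ integrates to $1$; since $\etr(U\cot U)\det\sinc U > 0$ on $\D_\pi$, $\eta_d$ is a genuine probability density. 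With the constant pinned down, the deformed integral is exactly $\E[\det(\Id - \tfrac id\Theta\beta(U)^{-1})^{-\frac d2}\det\beta'(U)]$ for $U$ of density $\eta_d$, which is the assertion.

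I expect the contour deformation of the first paragraph to be the only genuine obstacle: one must verify that the arc homotopy stays inside the holomorphy domain for all $\Theta$ with $\|\Theta\|_{\op} < d$ simultaneously, and that the arc contributions vanish uniformly as the arcs degenerate at the endpoints $u = \pm\pi$, where $\beta$ runs off to $\Re Z = -\infty$. Everything downstream — the change of variables, the identity $\etr\beta(U)/\det\beta(U) = \etr(U\cot U)\det\sinc U$, and the parity argument fixing the normalization — is routine.
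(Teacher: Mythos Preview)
Your proposal is correct and follows essentially the same route as the paper: the paper's proof is exactly the steepest-descent computation in the paragraphs preceding the lemma, and you have reproduced each step---invoking Lemma~\ref{mainchardef} with $\lambda=1$, deforming along the circular-arc homotopy after locating the singularities inside the unit disk, bounding the arc contributions, and fixing the normalization of $\eta_d$ via the parity of $\Re\beta'(U)$ at $\Theta=0$. If anything, you are slightly more explicit than the paper in checking that $\|\Theta\|_{\op}<d$ forces $\Id+\tfrac1d\Im\Theta\succ0$ and in naming Lemma~\ref{worldmover} for the deformation.
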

\noindent The density $\eta_d$ is sufficiently well behaved that we can essentially disregard the $\beta'$ and treat $\beta(U)^{-1}\approx \Id + \mathcal{N}(0,cd^{-1}\Id)$ (see Lemma \ref{normalcomp}).

\section{Steepest Descent contour for Fourier inversion}
\label{sec:steepest}

Lemma \ref{expectationdef} is in a convenient form for estimating $\varphi_V(\Theta)$ when $\|\Theta\|_{\op} < d.$  We shall use this in particular to compare this characteristic function to an appropriately chosen Gaussian characteristic function.

Define $T:\Sym\to{\Offdiag}$ by writing $\big(T(\Theta)\big)_{jk}:=\one\{\Theta_{jk}\geq t_{p,d}\}$ so that $T(V)$ has the same law as the adjacency matrix of $\G(n,p,d)$. Now let $M=M(n,d)\in{\Offdiag}$ have i.i.d. $\mathcal{N}(0,d^{-1})$ upper triangular entries so that $T(M)$ has the same law as the adjacency matrix of $\G(n,p_0)$ where 
\[
  p_0
  \coloneqq 
  \Pr[ Z \geq \sqrt{d} t_{p,d}]
  =
  \frac{1}{2}\erfc\left(\sqrt{\frac{dt_{p,d}^2}{2}}\right),
  \quad
  \text{where $Z \lawequals \mathcal{N}(0,1)$}.
\]
The characteristic function of $M$ is
$$\varphi_M(\Theta):=\etr\left(-\frac{d}{4}\Theta^2\right).$$

For $j<k$, let $e_{jk}\in{\Offdiag}$ be 1 in its $(j,k)$--th and $(k,j)$--th entries and 0 elsewhere. For a graph $G$, let ${{\Offdiag}^G}\subseteq{\Offdiag}$ denote the subspace spanned by $\{e_{jk}:(j,k)\in G\}$. Then the characteristic function for the random vector $\big(V_{jk}:(j,k)\in G\big)$ is simply $\varphi_V$ restricted to ${\Offdiag}^G$ and the analogous statement also holds for $M$. Since we can write the event $\G(n,p,d)\supseteq G$ as having $V_{jk}\in[t_{p,d},\infty)$ for all $(j,k)\in G$ 
and $\G(n,p_0)\supseteq G$ as having $M_{jk}\in[t_{p,d},\infty)$ for all $(j,k)\in G$
, we have 
\begin{align}\label{gnpd}
\Pr\big[\mathcal{G}(n,p,d)\supseteq G\big]&=\lim_{a\to\infty}\int_{{\Offdiag}^G}\varphi_V(\Theta)\prod_{(j,k)\in G}\frac{e^{-it_{p,d}\Theta_{jk}}-e^{-ia\Theta_{jk}}}{2\pi i\Theta_{jk}}\mathrm{d}\Theta.
\end{align}
and 
\begin{align}\label{gnp}
\Pr\big[\mathcal{G}(n,p_0)\supseteq G\big]&=\lim_{a\to\infty}\int_{{\Offdiag}^G}\varphi_M(\Theta)\prod_{(j,k)\in G}\frac{e^{-it_{p,d}\Theta_{jk}}-e^{-ia\Theta_{jk}}}{2\pi i\Theta_{jk}}\mathrm{d}\Theta.
\end{align}

We will compare these probabilities.   To do so, we would like to change the Fourier inversion contour to one on which the Gaussian integral is non--negative.
\begin{proposition}{\label{odeprop}}
There exists a smooth curve $\gamma\in\{z\in\C:\Image z<0\}$ which is symmetric with respect to the imaginary axis for which $$\lim_{a\to\infty}\exp\left(-\frac{\gamma^2}{2d}\right)\frac{e^{-it_{p,d}\gamma}-e^{-ia\gamma}}{2\pi i\gamma}\gamma'=\exp\left(-\frac{\gamma^2}{2d}\right)\frac{e^{-it_{p,d}\gamma}}{2\pi i\gamma}\gamma'\geq0.$$
Moreover, this curve can be parametrized as $\gamma(x):=dt_{p,d}(x-iy(x))$ where $y'(x)\leq0$ for all $x\geq0$ and 
$$1\leq y\leq1+\frac{\arctan\left(x/y\right)}{dt_{p,d}^2x}\leq1+\frac{1}{dt_{p,d}^2}.$$ 
\end{proposition}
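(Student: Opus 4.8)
The plan is to deform the Fourier--inversion contour $\mathbb R$ into the open lower half plane to a steepest--descent--type contour for the exponent $-\tfrac{\gamma^2}{2d}-it_{p,d}\gamma-\log(2\pi i\gamma)$ (whose saddle lies on the imaginary axis at $-i\,dt_{p,d}\,y_c$, $y_c:=\tfrac12(1+\sqrt{1+4/(dt_{p,d}^2)})\in[1,1+\tfrac1{dt_{p,d}^2}]$), and to arrange that the limiting integrand is a nonnegative real along it. Write $t=t_{p,d}$ and look for the curve in the form $\gamma(x)=dt\,(x-iy(x))$, so that $\Re\gamma$ is prescribed and $\gamma'(x)=dt\,(1-iy'(x))$; after this substitution the integrand is $\exp(-\tfrac{\gamma^2}{2d}-it\gamma)\cdot\tfrac{1-iy'}{2\pi i(x-iy)}$. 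Its modulus is automatically positive (the exponential never vanishes and $x-iy\neq0$ once $y\geq1$), so requiring it to be a nonnegative real means requiring its argument to vanish; using $\Im(-\tfrac{\gamma^2}{2d}-it\gamma)=dt^2x(y-1)$, $\arg(1-iy')=-\arctan(y')$ and $\arg\!\big(i(x-iy)\big)=\arctan(x/y)$, the zero--argument condition is $dt^2x(y-1)-\arctan(y')-\arctan(x/y)=0$, i.e.\ the first--order ODE
\[
  y'(x)=-\tan B(x),\qquad B(x):=\arctan\!\Big(\frac{x}{y}\Big)-dt^2\,x\,(y-1),
\]
to be solved on $[0,\infty)$; note $B(0)=0$, hence $y'(0)=0$, and the value $y(0)$ will be fixed below.

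The key algebraic point, which I would record immediately, is that the proposition's two inequalities are an exact restatement of bounds on $B$: since $dt^2x(y-1)=\arctan(x/y)-B$, the inequality $1\leq y$ is equivalent to $B\leq\arctan(x/y)$ and $y\leq1+\tfrac{\arctan(x/y)}{dt^2x}$ is equivalent to $B\geq0$; moreover $y'\leq0$ on $x\geq0$ is equivalent to $B\geq0$ there, and $1+\tfrac{\arctan(x/y)}{dt^2x}\leq1+\tfrac1{dt^2}$ follows from $\arctan(x/y)\leq x/y\leq x$ once $y\geq1$. So the proposition reduces to producing a global solution of the ODE with $0\leq B(x)\leq\arctan(x/y(x))$ for all $x\geq0$, plus the symmetry and the removal of the $e^{-ia\gamma}$ term.

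Such a solution I would obtain by shooting on $y_0:=y(0)$ over $[1,y_c]$, where $y_c$ is the root of $\tfrac1y=dt^2(y-1)$ (so $B'(0)=\tfrac1{y_0}-dt^2(y_0-1)$ vanishes exactly at $y_0=y_c$ and is positive on $[1,y_c)$). Let $\mathcal R:=\{(x,y):x>0,\ 1\leq y\leq1+\tfrac{\arctan(x/y)}{dt^2x}\}$, i.e.\ $\{0\leq B\leq\arctan(x/y)\}$, whose left edge at $x=0$ is the segment $1\leq y_0\leq y_c$. On the lower boundary $\{y=1\}$ one has $y'=-\tan(\arctan x)=-x<0$, and on the upper boundary $\{B=0\}$ one checks that the boundary curve has strictly negative slope while $y'=0$, so on both boundaries trajectories exit $\mathcal R$ transversally. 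Hence the sets $\{y_0:\text{the first exit of the trajectory from }\mathcal R\text{ is through the bottom}\}$ and $\{y_0:\ldots\text{through the top}\}$ are open (continuous dependence plus transversality) and nonempty: for $y_0$ near $1$, $B'(0)\approx1$ so $B$ grows quickly and $y$ falls to $1$; for $y_0$ near $y_c$, $B'(0)$ is small and a short estimate shows the distance from the trajectory to the upper boundary is eventually decreasing, so it meets that boundary. Since $[1,y_c]$ is connected there is a $y_0^\ast\in(1,y_c)$ in neither set; the corresponding trajectory cannot leave $\mathcal R$, and inside $\mathcal R$ one has $|y'|=|\tan B|\leq x$ on $[0,x]$, so there is no finite--time blow--up and the solution exists on all of $[0,\infty)$, staying in $\mathcal R$. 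This gives $1\leq y\leq1+\tfrac{\arctan(x/y)}{dt^2x}$, $y'\leq0$, and $y(0)=y_0^\ast\leq y_c\leq1+\tfrac1{dt^2}$. I expect this shooting step — in particular verifying the transversality on the upper boundary and the non--emptiness of the ``exits through the top'' set — to be the main obstacle, since it couples $y$ and $B$ through the nonlinear $\tan$.

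It then remains to assemble the statement. The ODE is invariant under $(x,y)\mapsto(-x,y)$ and $y'(0)=0$, so $y(-x):=y(x)$ extends the solution smoothly to $\mathbb R$; then $\gamma(-x)=-\overline{\gamma(x)}$, so the curve is symmetric about the imaginary axis, and $\Im\gamma=-dt\,y\leq-dt<0$, so $\gamma\subset\{\Im z<0\}$. For fixed $a$, $w\mapsto\exp(-\tfrac{w^2}{2d})\tfrac{e^{-it_{p,d}w}-e^{-iaw}}{2\pi iw}$ is entire (the pole at $0$ is removable) and decays like $\exp(-(\Re w)^2/(2d))$ on horizontal strips, so a standard contour--deformation argument identifies the integral over $\mathbb R$ with the one over $\gamma$; on $\gamma$ we have $|e^{-ia\gamma}|=e^{a\,\Im\gamma}\leq e^{-a\,dt}\to0$ uniformly, so $a\to\infty$ deletes that term, and what remains, $\exp(-\tfrac{\gamma^2}{2d})\tfrac{e^{-it_{p,d}\gamma}}{2\pi i\gamma}\gamma'$, has argument identically $0$ along $\gamma$ (by continuity from its value at $x=0$, which is a positive real) and hence is $\geq0$, completing the proof.
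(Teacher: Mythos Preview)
Your derivation of the ODE and the reduction of the proposition's bounds to $0\le B(x)\le\arctan(x/y)$ are exactly what the paper does; the key inequality $\arctan(s)>s/(1+s^2)$ that you need for transversality on the upper boundary $\{B=0\}$ is also precisely the one the paper uses. Where you diverge is in the construction of the global solution. You shoot forward from $x=0$ over the initial value $y_0\in[1,y_c]$ and invoke a connectedness argument on the two exit sets; the paper instead starts at the \emph{right}: for each $x_0>0$ it picks $y(x_0)=y_0(x_0)$ on the upper boundary (i.e.\ $B(x_0)=0$), solves backward to $x=0$, shows via the same $\arctan$ inequality that $B>0$ on $(0,x_0)$ so the solution stays in your region $\mathcal R$, and then sends $x_0\to\infty$. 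The resulting family $y(\cdot;x_0)$ is monotone in $x_0$, so a pointwise limit exists and solves the ODE on all of $[0,\infty)$. This backward construction has two advantages over your shooting: it is entirely constructive (no topological argument), and it automatically supplies the point you flag as the main obstacle, namely the nonemptiness of the ``exits through the top'' set---each $y(0;x_0)$ is an initial value whose forward trajectory hits $B=0$ at $x_0$.

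Your acknowledged gap is real: the sentence ``for $y_0$ near $y_c$, $B'(0)$ is small and a short estimate shows the distance from the trajectory to the upper boundary is eventually decreasing, so it meets that boundary'' is not yet a proof, and making it one (e.g.\ by expanding $B$ to third order at $y_0=y_c$ to see $B(x)\sim -x^3/(3y_c^3)<0$ and then perturbing) takes some care. The cleanest fix is simply to borrow the paper's backward solution from $(x_0,y_0(x_0))$ for one fixed $x_0$: that gives an explicit $y_0\in(1,y_c)$ whose forward trajectory exits through the top, after which your connectedness argument goes through. The rest of your write-up (even extension, $\gamma\subset\{\Im z<0\}$, removal of $e^{-ia\gamma}$ via $|e^{-ia\gamma}|\le e^{-a\,dt}$, positivity of the integrand from $\arg=0$) is correct and matches the paper.
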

\begin{proof}
We want to find a $\gamma\in\{z\in\C:\Image z<0\}$ such that 
\begin{equation}\label{target}
\frac{\gamma'}{i\gamma}=\exp\left(i\Image\left(it_{p,d}\gamma+\frac{\gamma^2}{2d}\right)\right)\psi
\end{equation}
where $\psi$ is a positive function. We will parametrize $\gamma$ as above where we hope to have $y>0$. This makes $\gamma'(x)=dt_{p,d}(1-iy'(x))$. So (\ref{target}) becomes  
$$\frac{y-xy'}{x^2+y^2}-i\frac{x+yy'}{x^2+y^2}=\psi(x)\exp\left(idt_{p,d}^2(1-y)x\right).$$
Equating real and imaginary parts, this equality holds if and only if 
$$(y-xy')\sec\left(dt_{p,d}^2(1-y)x\right)=\psi(x)(x^2+y^2)=-(x+yy')\csc\left(dt_{p,d}^2(1-y)x\right).$$
Solving for $y'$ gives us the first--order nonlinear ordinary differential equation 
$$y'(x)=\frac{y(x)\tan(dt_{p,d}^2x(1-y))+x}{x\tan(dt_{p,d}^2x(1-y))-y(x)}=-\frac{\tan(dt_{p,d}^2x(1-y))+x/y}{1-\tan(dt_{p,d}^2x(1-y))x/y}$$
which, by using the formula for the tangent of a sum of angles, simplifies to 
\begin{equation}\label{ode}
y'(x)=-\tan\big(dt_{p,d}^2x(1-y)+\arctan(x/y)\big). 
\end{equation}
Peano's Existence Theorem states that for each point $(x_0,y_0)\in\R^2$ at which 
$$F(x,y):=-\tan\big(dt_{p,d}^2x(1-y)+\arctan(x/y)\big)$$
is continuous, we are guaranteed a local solution $y$ to (\ref{ode}) satisfying $y(x_0)=y_0$ which, for some $\varepsilon>0$, is defined and differentiable for $x\in(x_0-\varepsilon,x_0+\varepsilon)$ and which has a continuous extension to $[x_0-\varepsilon,x_0+\varepsilon]$. So, should we find that $F$ is still continuous in a neighborhood of the points $(x_0\pm\varepsilon,y(x_0\pm\varepsilon))$, then Peano's Theorem can be applied to find a local solution in a neighborhood of $(x_0\pm\varepsilon,y(x_0\pm\varepsilon))$ which differentiably extends our original local solution beyond $x_0\pm\varepsilon$ in either direction. This process of extending a local solution can then be repeated until the solution curve $(x,y(x))$ meets a point at which $F$ becomes discontinuous, and we would call this curve a maximal solution to (\ref{ode}). Of course, we would also like to make sure that our solution stays positive. So, in addition to halting this extension process once $F$ becomes discontinuous, we also will see fit to halt it if our solution becomes 0. 

Our plan will be to initiate the curve at some $(x_0,y_0)$ where  $x_0>0$ is arbitrarily large and $y_0$ depends on $x_0$. We will then show that this curve can be extended backwards to $-x_0$ and that these solutions defined on $[-x_0,x_0]$ converge as $x_0\to\infty$ to a well--defined curve on $(-\infty,\infty)$. In particular, for any $x_0>0$, let $y_0=y_0(x_0)$ be the unique solution to  
$$dt_{p,d}^2x_0(1-y_0)+\arctan(x_0/y_0)=0.$$
Let 
$$x_{\text{min}}:=\inf\big\{x\in(0,x_0):0<y(x)<\infty\text{ and }y'(x)=-\tan\big(dt_{p,d}^2x(1-y)+\arctan(x/y)\big)\text{ is finite}\big\}.$$
For our initial condition, take $y(x_0)=y_0$, though it is very important to note that we could set $y(x_0)$ to be anything in $(1,y_0]$ for what we are about to do. 

The first thing that we would like to prove is that $dt_{p,d}^2(1-y)x+\arctan\left(x/y\right)>0$ for  all $x\in(x_{\min},x_0)$. Suppose for contradiction that $\left\{x_{\min}<x<x_0:dt_{p,d}^2(1-y)x+\arctan\left(x/y\right)\leq0\right\}$ is nonempty, and set 
$$x_1:=\sup\left\{x_{\min}<x<x_0:dt_{p,d}^2(1-y)x+\arctan\left(x/y\right)\leq0\right\}.$$
By definition, $dt_{p,d}^2(1-y)x+\arctan\left(x/y\right)$ should have a non--negative derivative at $x_1$, and we should also have $dt_{p,d}^2(1-y(x_1))x_1+\arctan\left(x_1/y(x_1)\right)=0$ and  
$y'(x_1)=-\tan\left(dt_{p,d}^2(1-y(x_1))x_1+\arctan\left(x_1/y(x_1)\right)\right)=0$.  
Thus 
\begin{align*} 
x_1\frac{\mathrm{d}}{\mathrm{d}x}\left(dt_{p,d}^2(1-y)x+\arctan\left(x/y\right)\right)\Big\vert_{x=x_1}&=dt_{p,d}^2(1-y(x_1))x_1+\frac{y(x_1)x_1}{y(x_1)^2+x_1^2} \\ 
&=-\arctan\left(x_1/y(x_1)\right)+\frac{y(x_1)/x_1}{(y(x_1)/x_1)^2+1}<0 
\end{align*}
where the last inequality follows by noting that $\arctan(t^{-1})>\frac{t}{t^2+1}$ for all $t>0$. This contradicts $\left\{x_{\min}<x<x_0:dt_{p,d}^2(1-y)x+\arctan\left(x/y\right)\leq0\right\}$ being nonempty. We have therefore established that $dt_{p,d}^2(1-y)x+\arctan\left(x/y\right)>0$ for all $x\in(x_{\min},x_0)$ which also implies that $y'<0$ on this entire interval as well, thus making $y_0$ the global minimum of $y$ over $(x_{\min},x_0]$. So in total, 
\begin{align}\label{bounds}
    y_0<y< 1+\frac{\arctan(x/y)}{dt_{p,d}^2x}<1+\frac{1}{dt_{p,d}^2}
\end{align}
for $x\in(x_{\min},x_0)$. Additionally, it can be seen that we always have $y_0>1$. Therefore 
\begin{align}\label{morebounds}
    0<dt_{p,d}^2(1-y)x+\arctan\left(x/y\right)<\arctan(x)<\arctan(x_0)<\frac{\pi}{2}
\end{align} for all $x\in(x_{\min},x_0)$. This implies that 
$$x_{\text{min}}=\inf\big\{x\in(0,x_0):0<y(x)<\infty\text{ and }y'(x)=-\tan\big(dt_{p,d}^2x(1-y)+\arctan(x/y)\big)\text{ is finite}\big\}=0.$$ 
Not only does this show a differentiable solution to (\ref{ode}) with $y(x_0)=y_0(x_0)$ exists on $[0,x_0]$, but this solution is unique. Indeed, we can guarantee uniqueness over $[x_0-\varepsilon,x_0]$ for some $\varepsilon>0$ by the Picard--Lindelof Theorem. We claim that, as long as $x_0-\varepsilon>0$, we will have $1<y(x_0-\varepsilon)<y_0(x_0-\varepsilon)$. Thus we can repeat this process to uniquely extend this solution a little further to the left and continue this unique extension process until we reach 0. To see that $y(x_0-\varepsilon)<y_0(x_0-\varepsilon)$ as claimed, note that 
$$F(x_0-\varepsilon,y(x_0-\varepsilon))=y'(x_0-\varepsilon)<0=F(x_0-\varepsilon,y_0(x_0-\varepsilon))$$
and $F(x_0-\varepsilon,t)$ is increasing in the variable $t$ for the relevant range of $t$.  

In summary, the solution to (\ref{ode}) with initial condition $y(x_0)=y_0$ is uniquely defined, differentiable, satisfies (\ref{bounds}), and is monotone decreasing over $[0,x_0]$. 
Moreover, we can evenly extend this solution to $[-x_0,0]$ as follows: Let $\tilde{y}(x):=y(-x)$. Then, for $x\in[-x_0,0]$, we have
\begin{align*}
\tilde{y}'(x)&=-y'(-x) \\
&=\tan\big(-dt_{p,d}^2x(1-y(-x))-\arctan(x/y(-x))\big) \\ &=-\tan\big(dt_{p,d}^2x(1-\tilde{y})+\arctan(x/\tilde{y})\big).  
\end{align*} 
Since $x_0$ is a critical point for this solution, we can also extend this curve differentiably to $\R$ by setting $y(x)=y_0$ for $|x|>x_0$. Denote by $y(x;x_0)$ this curve which is a solution to (\ref{ode}) for $x\in[-x_0,x_0]$. \\

We still need to show that $\lim_{x_0\to\infty}y(x;x_0)$ converges to a well--defined solution to (\ref{ode}). First note that, as we showed earlier, for any $x_0'<x_0$ we have
$$y(x_0';x_0)< y_0(x_0')= y(x_0';x_0').$$
This implies that we must in fact have 
$$1<y(x;x_0)<y(x;x_0')$$
for all $x\leq x_0'<x_0$. Indeed, otherwise we would have two different solutions to (\ref{ode}) which go through $(x_0,y_0(x_0))$, contradicting the uniqueness of $y(x;x_0)$. By how we defined $y(x;x_0)$ for $x\geq x_0$ and the decreasing nature of the implicitly defined curve $y_0(x_0)$ and $y(x;x_0)$, it follows that 
$$1<y(x;x_0)<y(x;x_0')$$
for $x>x_0'$ as well. Thus, by (\ref{ode}) and (\ref{morebounds}), we have 
$$-x<\frac{\mathrm{d}}{\mathrm{d}x}y(x;x_0)<\frac{\mathrm{d}}{\mathrm{d}x}y(x;x_0')$$
for $x_0'<x_0$ and $x>0$. We therefore have measurable pointwise limits $y(x;\infty):=\lim_{x_0\to\infty}y(x;x_0)$ and  $\lim_{x_0\to\infty}\frac{\mathrm{d}}{\mathrm{d}x}y(x;x_0)$, which, by monotone convergence, satisfy  
$$y(x;\infty)-y(0;\infty)=\int_0^x\lim_{x_0\to\infty}\frac{\mathrm{d}}{\mathrm{d}s}y(s;x_0)\mathrm{d}s=\int_0^x-\tan\big(dt_{p,d}^2s(1-y(s;\infty))+\arctan(s/y(s;\infty))\big)\mathrm{d}s.$$
The proof is then completed by differentiating both sides with respect to $x$. 
\end{proof}

\begin{lemma}
For $\gamma$ as defined in the proof above, we have  $\left|\frac{\gamma'}{\gamma}\right|\leq1$. 
\end{lemma}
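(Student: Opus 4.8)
The plan is to reduce the claim to an elementary inequality about the function $y$ via the parametrization $\gamma(x) = dt_{p,d}(x - iy(x))$ from Proposition \ref{odeprop}. Since $\gamma'(x) = dt_{p,d}(1 - iy'(x))$, the positive scalar $dt_{p,d}$ cancels in the ratio and one gets
\[
  \left|\frac{\gamma'(x)}{\gamma(x)}\right|^2 = \frac{|1 - iy'(x)|^2}{|x - iy(x)|^2} = \frac{1 + y'(x)^2}{x^2 + y(x)^2}.
\]
Thus it suffices to show $1 + y'(x)^2 \le x^2 + y(x)^2$ for all $x$. Proposition \ref{odeprop} gives $y \ge 1$, hence $y^2 - 1 \ge 0$, so it is enough to prove the cleaner bound $y'(x)^2 \le x^2$, i.e.\ $|y'(x)| \le |x|$; then $1 + y'(x)^2 \le 1 + x^2 \le y(x)^2 + x^2$.

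To prove $|y'| \le |x|$, first reduce to $x \ge 0$: the curve $\gamma$ is symmetric about the imaginary axis, i.e.\ $y$ is even, so $|y'(-x)| = |y'(x)|$, and for $x = 0$ one has $y'(0) = 0$. For $x > 0$, use the ODE from the proof of Proposition \ref{odeprop}, namely $y'(x) = -\tan\theta(x)$ with $\theta(x) := dt_{p,d}^2 x(1-y) + \arctan(x/y)$. I claim $0 \le \theta(x) \le \arctan x$. The lower bound is precisely the rearrangement of the upper bound $y \le 1 + \arctan(x/y)/(dt_{p,d}^2 x)$ supplied by Proposition \ref{odeprop} (multiply through by $dt_{p,d}^2 x > 0$). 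For the upper bound, $y \ge 1$ forces both $dt_{p,d}^2 x(1-y) \le 0$ and $\arctan(x/y) \le \arctan x$, so $\theta(x) \le \arctan x < \pi/2$. Since $\tan$ is increasing and nonnegative on $[0, \pi/2)$ and $\theta(x) \in [0, \arctan x]$, we conclude $|y'(x)| = \tan\theta(x) \le \tan(\arctan x) = x = |x|$, as desired.

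There is no substantial obstacle here; the lemma is a short computation. The one point deserving care is that it is exactly the two-sided control $1 \le y \le 1 + \arctan(x/y)/(dt_{p,d}^2 x)$ from Proposition \ref{odeprop} that confines $\theta$ to $[0, \arctan x]$ --- the upper bound on $y$ keeping $\theta \ge 0$ and the lower bound $y \ge 1$ keeping $\theta \le \arctan x$ --- and that the same lower bound $y \ge 1$ is what lets one upgrade $|y'| \le |x|$ to the required $1 + y'^2 \le x^2 + y^2$.
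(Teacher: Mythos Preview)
Your proof is correct and follows essentially the same approach as the paper's: both compute $|\gamma'/\gamma|^2 = (1+y'^2)/(x^2+y^2)$, use the ODE $y'=-\tan\theta$ with $\theta = dt_{p,d}^2 x(1-y)+\arctan(x/y)$, and invoke the two-sided control on $y$ from Proposition \ref{odeprop} to bound $\theta$. The only cosmetic difference is that the paper bounds $\theta \le \arctan(x/y)$ and writes $1+y'^2 = \sec^2\theta \le (x^2+y^2)/y^2$, yielding the slightly sharper intermediate estimate $|\gamma'/\gamma|^2 \le 1/y^2 \le 1$, whereas you bound $\theta \le \arctan x$ and conclude $|y'|\le x$; both routes are equivalent for the stated lemma.
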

\begin{proof}
Since 
$0\leq dt_{p,d}^2(1-y)x+\arctan\left(x/y\right)\leq\arctan(x/y)$, the identity $\cos\arctan(x/y)=\frac{y}{\sqrt{x^2+y^2}}$ for $y>0$ gives $$\left|\frac{\gamma'}{\gamma}\right|^2=\frac{1+(y')^2}{x^2+y^2}=\frac{\sec^2\big(dt_{p,d}^2x(1-y)+\arctan(x/y)\big)}{x^2+y^2}\leq\frac{1}{y^2}\leq1.$$
\end{proof}

Let 
$$\gamma^G:=\{\Theta\in\C\otimes{\Offdiag}^G:\Theta_{jk}\in\gamma,\,(j,k)\in G\}.$$
Recall that we had parametrized $\gamma$ by writing $\gamma(x)=dt_{p,d}(x-iy(x))$ and letting $x$ run over all of $\R$. We can likewise parametrize $\gamma^G$ by writing 
$$\gamma^G_{jk}(X):=dt_{p,d}(X_{jk}-iY_{jk}(X))$$ 
where $Y_{jk}(X)=y(X_{jk})$ and $X$ runs over all of ${\Offdiag}^G$.

\begin{corollary}\label{density}
The map
$$x\mapsto\exp\left(-\frac{\gamma(x)^2}{2d}-it_{p,d}\gamma(x)\right)\frac{\gamma'(x)}{2\pi i p_0\gamma(x)}$$
is a probability density which is bounded above by $$\sqrt{\frac{dt_{p,d}^2}{2\pi }}\exp\left(o(1)-\frac{dt_{p,d}^2}{2}x^2\right).$$
\end{corollary}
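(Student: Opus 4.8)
The statement has two parts --- that the displayed function is a probability density, and the Gaussian pointwise bound --- and I would treat them in that order.

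For the density claim, nonnegativity is immediate: Proposition \ref{odeprop} asserts precisely that $\exp(-\gamma^2/(2d))\tfrac{e^{-it_{p,d}\gamma}}{2\pi i\gamma}\gamma'\ge 0$ along $\gamma$, and dividing by $p_0>0$ preserves this. For unit mass I would start from \eqref{gnp} applied to the single--edge graph: since $\varphi_M$ restricted to one edge is the characteristic function $\theta\mapsto e^{-\theta^2/(2d)}$ of an $\mathcal N(0,d^{-1})$ variable, this reads
\[
  p_0=\lim_{a\to\infty}\int_{\mathbb R}e^{-\theta^2/(2d)}\,\frac{e^{-it_{p,d}\theta}-e^{-ia\theta}}{2\pi i\theta}\,d\theta .
\]
The integrand is entire (the apparent pole at $\theta=0$ is removable), so I would deform the real line onto $\gamma$ via the homotopy $H(x,s):=dt_{p,d}\big(x-is\,y(x)\big)$, $s\in[0,1]$; by the bounds on $y$ in Proposition \ref{odeprop} the image of $H$ stays inside the strip $-dt_{p,d}(1+(dt_{p,d}^2)^{-1})\le\Im z\le 0$, on which $|e^{-\theta^2/(2d)}|=e^{((\Im\theta)^2-(\Re\theta)^2)/(2d)}$ decays like a Gaussian in $\Re\theta$ while $|e^{-it_{p,d}\theta}|,|e^{-ia\theta}|\le 1$. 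Hence the vertical pieces at $\Re\theta=\pm R$ drop out as $R\to\infty$ and Lemma \ref{worldmover} gives the same integral over $\gamma$. Parametrizing $\gamma$ by $x$ and sending $a\to\infty$ --- the term $e^{-ia\gamma(x)}$ tends to $0$ uniformly since $\Im\gamma(x)=-dt_{p,d}y(x)\le-dt_{p,d}<0$, while the rest is a fixed $L^1(dx)$ function --- yields $p_0=\int_{\mathbb R}e^{-\gamma(x)^2/(2d)-it_{p,d}\gamma(x)}\tfrac{\gamma'(x)}{2\pi i\gamma(x)}\,dx$, so dividing by $p_0$ shows the map integrates to $1$.

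For the pointwise bound, I would write $\gamma(x)=dt_{p,d}(x-iy)$ with $y=y(x)$ and compute
\[
  \Re\!\left(-\tfrac{\gamma(x)^2}{2d}-it_{p,d}\gamma(x)\right)=-\tfrac{dt_{p,d}^2}{2}x^2-\tfrac{dt_{p,d}^2}{2}\big(2y-y^2\big)=-\tfrac{dt_{p,d}^2}{2}x^2-\tfrac{dt_{p,d}^2}{2}+\tfrac{dt_{p,d}^2}{2}(y-1)^2 .
\]
Proposition \ref{odeprop} gives $0\le y-1\le(dt_{p,d}^2)^{-1}$, so the last term is at most $\tfrac{1}{2dt_{p,d}^2}=o(1)$, using $dt_{p,d}^2\to\infty$ in the regime of interest (Lemma \ref{lem:pp0}). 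Combined with the bound $|\gamma'(x)/\gamma(x)|\le 1$ from the lemma above, this bounds the map by $\tfrac{1}{2\pi p_0}\exp\!\big(o(1)-\tfrac{dt_{p,d}^2}{2}-\tfrac{dt_{p,d}^2}{2}x^2\big)$. I would finish with the Mills--ratio asymptotic $p_0=1-\Phi(\sqrt d\,t_{p,d})=(1+o(1))\,e^{-dt_{p,d}^2/2}/\sqrt{2\pi dt_{p,d}^2}$ (valid as $\sqrt d\,t_{p,d}\to\infty$), which turns $\tfrac{1}{2\pi p_0}e^{-dt_{p,d}^2/2}$ into $(1+o(1))\sqrt{dt_{p,d}^2/(2\pi)}$ and hence gives exactly the claimed bound $\sqrt{dt_{p,d}^2/(2\pi)}\,e^{o(1)-dt_{p,d}^2x^2/2}$.

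The main obstacle is the contour deformation in the density step, together with the exchange of the $a\to\infty$ limit and the integral over $\gamma$: one must check that the homotopy stays in the domain of analyticity with bounded $s$--derivative and that the contributions near $\Re\theta=\pm\infty$ vanish. Here this is routine, since the integrand is entire and the factor $e^{-\theta^2/(2d)}$ dominates $e^{-it_{p,d}\theta}$ and $e^{-ia\theta}$ throughout the relevant strip; once Proposition \ref{odeprop} and $|\gamma'/\gamma|\le 1$ are in hand, the rest is the bookkeeping above.
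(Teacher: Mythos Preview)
Your argument is correct and matches the paper's approach. The pointwise bound is essentially line-for-line the paper's computation: expand $\Re(-\gamma^2/(2d)-it_{p,d}\gamma)$ with $\gamma=dt_{p,d}(x-iy)$, use $0\le y-1\le (dt_{p,d}^2)^{-1}$ and $|\gamma'/\gamma|\le 1$, and finish with the Mills-ratio asymptotic for $p_0$. For the density claim the paper is much terser, writing only that it ``follows from the construction of $\gamma$''; your single-edge instance of \eqref{gnp} followed by contour deformation to $\gamma$ and dominated convergence as $a\to\infty$ is exactly the content of that sentence (and is the one-edge case of the unnamed lemma immediately after the corollary), so there is no genuine difference in method.
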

\begin{proof}
The claim that this function is a probability density follows from the construction of $\gamma$. In particular, since $|\gamma'/\gamma|\leq1$ and $y\leq1+\frac{1}{dt_{p,d}^2}$,
\begin{align*}
\exp\left(-\frac{\gamma(x)^2}{2d}-it_{p,d}\gamma(x)\right)\frac{\gamma'(x)}{2\pi i p_0\gamma(x)}&=\exp\left(dt_{p,d}^2\left(\frac{y^2-x^2}{2}-y\right)\right)\frac{|\gamma'/\gamma|}{2\pi p_0} \\ 
&\leq e^{\frac{dt_{p,d}^2}{2}(1-y)^2}\frac{e^{-\frac{dt_{p,d}^2}{2}}}{2\pi p_0}\exp\left(-\frac{dt_{p,d}^2x^2}{2}\right) \\
&\leq e^{\frac{1}{2dt_{p,d}^2}}\frac{e^{-\frac{dt_{p,d}^2}{2}}}{2\pi p_0}\exp\left(-\frac{dt_{p,d}^2x^2}{2}\right).
\end{align*}
The rest follows by recalling that $p_0=  \frac{1}{2}\erfc\left(\sqrt{\frac{dt_{p,d}^2}{2}}\right)\sim\frac{e^{-\frac{dt_{p,d}^2}{2}}}{\sqrt{2\pi dt_{p,d}^2}}$. 
\end{proof}

\begin{lemma}
For $d\geq3$, 
\begin{equation}\label{mainequality}
    \Pr\big[\mathcal{G}(n,p,d)\supseteq G\big]=\Re\int_{{\Offdiag}^G}\varphi_V(\gamma^G)\prod_{(j,k)\in G}\frac{e^{-it_{p,d}\gamma^G_{jk}}}{2\pi i\gamma^G_{jk}}\frac{\partial\gamma^G_{jk}}{\partial X_{jk}}\mathrm{d}X
\end{equation}
and
$$\Pr\big[\mathcal{G}(n,p_0)\supseteq G\big]=\int_{{\Offdiag}^G}\varphi_M(\gamma^G)\prod_{(j,k)\in G}\frac{e^{-it_{p,d}\gamma^G_{jk}}}{2\pi i\gamma^G_{jk}}\frac{\partial\gamma^G_{jk}}{\partial X_{jk}}\mathrm{d}X$$
\end{lemma}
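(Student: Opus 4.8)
Both identities are obtained from the Fourier--inversion formulas \eqref{gnpd} and \eqref{gnp} by deforming, one complex coordinate $\Theta_{jk}$ at a time, the contour of integration from the real subspace $\Offdiag^G$ onto the curve $\gamma^G$ constructed in Proposition \ref{odeprop}, and then sending $a\to\infty$. The value of the integral is unchanged by the deformation, and on $\gamma^G$ the cutoff terms $e^{-ia\Theta_{jk}}$ decay because $\gamma$ lies in the lower half plane; the two cases then diverge only in how much integrability is available afterwards.

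\textbf{Legality of the deformation.} The first point is that the full integrand in \eqref{gnpd} and \eqref{gnp} is \emph{entire} in each $\Theta_{jk}$: the apparent pole of $\frac{e^{-it_{p,d}\Theta_{jk}}-e^{-ia\Theta_{jk}}}{2\pi i\Theta_{jk}}$ at the origin is removable since its numerator vanishes there, $\varphi_V$ is entire because $V$ is a bounded random matrix, and $\varphi_M(\Theta)=\etr(-\tfrac{1}{4d}\Theta^2)$ is entire by inspection. Fixing all but one coordinate and applying Cauchy's theorem on the region bounded by $[-R,R]$, by the corresponding arc of $\gamma$, and by the two short segments joining their endpoints, it remains to see that the lateral contributions vanish as $R\to\infty$. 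On those segments $\Re\Theta_{jk}=\pm R$ while $\Im\Theta_{jk}$ ranges over a bounded set (the region swept between $\R$ and $\gamma$, which by the bounds $1\le y\le 1+\tfrac{1}{dt_{p,d}^2}$ of Proposition \ref{odeprop} has height at most $dt_{p,d}(1+\tfrac{1}{dt_{p,d}^2})$); there $|\varphi_M|$ has Gaussian decay in $\Re\Theta_{jk}$ uniformly in the bounded imaginary part, and $|\varphi_V|\le 1$, while the Fourier kernel supplies an extra factor $O(1/R)$. Hence the lateral integrals are $O(1/R)$ and, iterating over the $\sigma_G$ edges of $G$, the contour is moved from $\Offdiag^G$ to $\gamma^G$.

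\textbf{Removing the cutoff and identifying the limits.} On $\gamma^G$ one has $\Im\gamma^G_{jk}=-dt_{p,d}Y_{jk}\le -dt_{p,d}$, so $|e^{-ia\gamma^G_{jk}}|=e^{-a\,dt_{p,d}Y_{jk}}\le e^{-a\,dt_{p,d}}\to 0$ uniformly in $X$; expanding the product, every cross term involving some $e^{-ia\gamma_{jk}}$ is uniformly $O(e^{-a\,dt_{p,d}})$, while $\prod_{(j,k)\in G}|\gamma^G_{jk}{}'/\gamma^G_{jk}|\le 1$ by the lemma following Proposition \ref{odeprop}. For the $\mathcal{G}(n,p_0)$ integral we also have $|\varphi_M(\gamma^G)|=\exp\!\big(-\tfrac{dt_{p,d}^2}{2}\sum_{(j,k)}(X_{jk}^2-Y_{jk}^2)\big)$, which is Gaussian in the $X_{jk}$, so dominated convergence lets us pass $a\to\infty$; the resulting integral factors over the edges of $G$, since $\varphi_M$ restricted to $\Offdiag^G$ is the product over $(j,k)\in G$ of the single--edge Gaussian characteristic function $\Theta_{jk}\mapsto e^{-\Theta_{jk}^2/(2d)}$, and each one--dimensional factor equals $p_0$ by Corollary \ref{density}. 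Thus the right side is $p_0^{\sigma_G}=\Pr[\mathcal{G}(n,p_0)\supseteq G]$ and is manifestly real, so no real part is needed. For the $\mathcal{G}(n,p,d)$ integral only $|\varphi_V|\le 1$ is available, so the $\gamma^G$--integral need not converge absolutely when $\sigma_G\ge 2$; we read it as the improper integral $\lim_{R\to\infty}\int_{\{|X_{jk}|\le R,\;(j,k)\in G\}}$. Reversing the deformation back onto the box $[-dt_{p,d}R,dt_{p,d}R]^{\sigma_G}$ in $\Offdiag^G$ (lateral errors again $O(1/R)$ up to polynomial factors, using $|\varphi_V|\le 1$) shows that, keeping a finite $a$, the truncated $\gamma^G$--integral agrees with the truncation of \eqref{gnpd} up to an error $O(e^{-a\,dt_{p,d}}R^{\sigma_G})$ coming from the $e^{-ia\Theta_{jk}}$ terms; letting $R\to\infty$ with $a=a(R)\to\infty$ slowly enough that this error vanishes recovers $\Pr[\mathcal{G}(n,p,d)\supseteq G]$. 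The real part is harmless: under $X\mapsto -X$ the integrand is conjugated — using $\gamma(-x)=-\overline{\gamma(x)}$, $y'(-x)=-y'(x)$, that $z\mapsto \tfrac{e^{-it_{p,d}z}}{2\pi i z}$ conjugates under $z\mapsto -\bar z$, and that $\varphi_V$ has real power--series coefficients — so each symmetric truncated integral is already real.

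\textbf{Main obstacle.} The delicate part is precisely this last bookkeeping. Because the lemma assumes only $d\ge 3$, none of the quantitative decay of $\varphi_V$ (e.g.\ Corollary \ref{absbound}, which requires $d>2n$) is at our disposal, so one cannot simply apply dominated convergence on $\Offdiag^G$ or on $\gamma^G$. The whole argument has to be carried out at the level of the truncated integrals $\int_{[-R,R]^{\sigma_G}}$, carefully ordering the nested limits $R\to\infty$ and $a\to\infty$, and verifying that all lateral and cutoff error terms are controlled using nothing more than $|\varphi_V|\le 1$, the $O(1/R)$ decay of the Fourier kernel, and the uniform bounds on $y$ and $|\gamma'/\gamma|$ from Proposition \ref{odeprop}.
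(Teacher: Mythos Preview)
Your approach is essentially the same as the paper's: start from \eqref{gnpd}--\eqref{gnp}, deform each coordinate from $\R$ to $\gamma$, let $a\to\infty$ using $\Im\gamma<0$, and take the real part. Your extra care with truncated boxes and the coupled limit $a=a(R)$ is a legitimate way to avoid the paper's unproved assertion that the integrands are $\mathcal L^1$ over any horizontal strip; the paper simply claims this and then invokes contour deformation directly.

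There is one slip you should fix. You repeatedly use $|\varphi_V|\le 1$ on the lateral segments and on $\gamma^G$, but that bound is only valid for \emph{real} $\Theta$. On the complex contours in play $\Im\Theta\neq 0$, and the characteristic‐function bound fails. What is true, and is all you need, is that $V$ has entries in $[-1,1]$, so for any $\Theta\in\C\otimes\Offdiag^G$
\[
  |\varphi_V(\Theta)|\;=\;\bigl|\E e^{i\langle V,\Theta\rangle}\bigr|\;\le\;\E e^{-\langle V,\Im\Theta\rangle}\;\le\;\exp\Bigl(\sum_{(j,k)\in G}|\Im\Theta_{jk}|\Bigr),
\]
which is a fixed finite constant on any strip of bounded imaginary part (in particular on $\gamma^G$ and on your lateral segments, by the bound $1\le y\le 1+\tfrac{1}{dt_{p,d}^2}$ from Proposition~\ref{odeprop}). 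With this constant in place of $1$, your lateral estimates and your $R\to\infty$ bookkeeping go through unchanged.

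Your symmetry argument for reality (using $\gamma(-x)=-\overline{\gamma(x)}$ and $\varphi_V(-\bar\Theta)=\overline{\varphi_V(\Theta)}$) is correct and slightly more informative than the paper's one‐liner, which simply observes that the left side of \eqref{mainequality} is a probability and hence equal to its own real part.
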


\begin{proof}
Recall we had previously concluded that 
\begin{align*}
\Pr\big[\mathcal{G}(n,p,d)\supseteq G\big]&=\lim_{a\to\infty}\int_{{\Offdiag}^G}\varphi_V(\Theta)\prod_{(j,k)\in G}\frac{e^{-it_{p,d}\Theta_{jk}}-e^{-ia\Theta_{jk}}}{2\pi i\Theta_{jk}}\mathrm{d}\Theta
\end{align*}
and 
\begin{align*}
\Pr\big[\mathcal{G}(n,p_0)\supseteq G\big]&=\lim_{a\to\infty}\int_{{\Offdiag}^G}\varphi_M(\Theta)\prod_{(j,k)\in G}\frac{e^{-it_{p,d}\Theta_{jk}}-e^{-ia\Theta_{jk}}}{2\pi i\Theta_{jk}}\mathrm{d}\Theta. \end{align*}
Moreover, we know that these integrands are entire and $\mathcal{L}^1$ over any path with bounded imaginary part. Thus we can deform each coordinate of ${\Offdiag}^G$ to any curve with bounded imaginary part 
and unbounded real part, such as the curve $\gamma$ defined in the proof of Proposition \ref{odeprop}.
Along a curve such as this one, we noticed that we have 
$$\lim_{a\to\infty}\frac{e^{-it_{p,d}\gamma}-e^{-ia\gamma}}{2\pi i\gamma}=\frac{e^{-it_{p,d}\gamma}}{2\pi i\gamma}$$  
due to the strict negativity of the imaginary part of $\gamma$. The result now follows by noting that the imaginary parts of these integrals must vanish since the left hand sides are probabilities. 
\end{proof}

\section{Bounding in terms of graph statistics}
\label{sec:bounds}

In this section, we prove Theorem \ref{mainmain}.
The error for the difference between 
$\Pr\big[\mathcal{G}(n,p,d)\supseteq G\big]$
and $\Pr\big[\mathcal{G}(n,p_0)\supseteq G\big]$ will be quantified in terms of four graph statistics of $G$.  
In this section we shall suppose without loss of generality $n=\mu_G,$ the number of vertices of the graph and that the graph has no isolated vertices.  We shall not explicitly suppose a relationship between $p$ and $n,$ but leave $p$ as a free parameter.
The other three graph parameters are the number of edges in $G$ denoted $\sigma_G$, the largest degree of $G$ denoted $\delta_G$, and the number of triangles in $G$ denoted $\tau_G$. Note that, since $n$ is the number of nonisolated vertices in $G$, we always have $n/2\leq \sigma_G \leq n\delta_G$. 

We will break the integral in \ref{mainequality} into two parts, one of which we will want to show is close to 1 and the other we will want to show is negligible in comparison. We will denote this negligible piece by
$$B(\kappa):=\left|\Re\int_{\{X\in{\Offdiag}^G:\|X\|_{\Fr}>\sqrt{2}\kappa\}}\varphi_V(\gamma^G)\prod_{(j,k)\in G}\frac{e^{-it_{p,d}\gamma^G_{jk}}}{2\pi i\gamma^G_{jk}}\frac{\partial\gamma^G_{jk}}{\partial X_{jk}}\mathrm{d}X\right|$$
where $\kappa$ is a cutoff over which we can optimize (but which will be chosen $o(d)$).


\subsection{Contribution from Large $\|X\|_{\Fr}$}

\begin{lemma}\label{Bbound}
Assume $\log d = O(dt_{p,d}^2)$, and suppose $G$ satisfies 
\[
  \sigma_G = o\left(t_{p,d}^{-2}\right),
  \quad\delta_G = o(t_{p,d}^{-1}), 
  \quad\frac{n \delta_G^4}{\sigma_G}= O\left(\frac{\log d}{dt_{p,d}^4}\right),
  \quad\text{and}
  \quad\frac{\tau_G}{\sigma_G}= O\left(\frac{\log d}{dt_{p,d}^3}\right).
\]
Then, for a sufficiently large constant $C$ and $\kappa=\sqrt{C\sigma_G}$, we have $B(\kappa)=O(p_0^{\sigma_G}/d)$. 
\end{lemma}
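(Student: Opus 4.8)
## Proof proposal for Lemma \ref{Bbound}

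The plan is to bound $B(\kappa)$ by pulling the modulus inside the integral and then splitting the integration domain $\{X \in {\Offdiag}^G : \|X\|_{\Fr} > \sqrt 2\kappa\}$ into dyadic shells $\{2^m\sqrt 2\kappa < \|X\|_{\Fr} \le 2^{m+1}\sqrt 2\kappa\}$ for $m \ge 0$. On each shell I will use two ingredients: (i) the absolute bound on the characteristic function $|\varphi_V(\gamma^G)|$ coming from Corollary \ref{absbound} (applied with $\Theta = \gamma^G$, whose imaginary part is controlled by Proposition \ref{odeprop}, so that $d\Id + \Im\gamma^G \succ 0$ holds once $d t_{p,d}^2$ is large enough, which is guaranteed by $\log d = O(dt_{p,d}^2)$ together with the smallness hypotheses on $\sigma_G, \delta_G$); and (ii) the fact, from Corollary \ref{density}, that $\prod_{(j,k)\in G} \left| e^{-it_{p,d}\gamma^G_{jk}} \frac{\partial\gamma^G_{jk}/\partial X_{jk}}{2\pi i \gamma^G_{jk}}\right|$ is at most $p_0^{\sigma_G}$ times a product of Gaussian-type densities $\prod \sqrt{dt_{p,d}^2/(2\pi)}\,e^{o(1) - \frac{dt_{p,d}^2}{2}X_{jk}^2}$. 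The overall prefactor $p_0^{\sigma_G}$ is exactly the target order, so everything reduces to showing the remaining $X$-integral over $\|X\|_{\Fr} > \sqrt 2 \kappa$ is $O(1/d)$.

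The heart of the estimate is therefore to control, on the shell at radius $\asymp r := 2^m\kappa$, the product of (a) the Gaussian factor $e^{-\frac{dt_{p,d}^2}{2}\|X\|_{\Fr}^2} \le e^{-dt_{p,d}^2 r^2}$, (b) the operator-norm and determinant factors from Corollary \ref{absbound}. Here I expand $\gamma^G = dt_{p,d}(X - iY(X))$, so $\Re\gamma^G = dt_{p,d}X$ and $\Im\gamma^G = -dt_{p,d}Y(X)$ with $1 \le Y \le 1 + (dt_{p,d}^2)^{-1}$ entrywise; thus $\|d^{-1}\Re\gamma^G\|_{\Fr} = t_{p,d}\|X\|_{\Fr}$ and $A := \Id + d^{-1}\Im\gamma^G$ has $\|A\|_{\op} = O(1)$ and $\det A = O(1)^{\,?}$ — more carefully, $A = \Id - t_{p,d}Y(X)$ on the support graph, which is close to $\Id$ since $\delta_G = o(t_{p,d}^{-1})$ forces $t_{p,d}\|Y\|_{\op} \le t_{p,d}\delta_G \cdot O(1) = o(1)$, so $\det A = 1 + o(1)$ and $\|A\|_{\op} = 1 + o(1)$. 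The factor $\left(1 + \frac{\|\Re\gamma^G\|_{\Fr}^2}{\|d\Id + \Im\gamma^G\|_{\op}^2}\right)^{n/2 - d/4}$ then behaves like $\left(1 + c\,t_{p,d}^2\|X\|_{\Fr}^2\right)^{-d/4(1+o(1))}$, and combining with the Gaussian gives superexponential decay in $r$ once $\kappa^2 = C\sigma_G$ with $C$ large; the entropy cost of the shells (the volume of ${\Offdiag}^G \cong \R^{\sigma_G}$ at radius $r$, roughly $r^{\sigma_G}$) is absorbed because $dt_{p,d}^2 r^2 \gg \sigma_G \log r$ in the chosen regime. The Gamma-quotient $\Gamma((d-2n)/4)/\Gamma(d/4)$ appearing in Corollary \ref{absbound} should be estimated by Stirling as $\le (d/4)^{-n/2}(1+o(1))$, and the per-vertex Stirling factor $\Gamma(d/2+1)e^{d/2}/(2\sqrt\pi (d/2)^{d/2}) \asymp \sqrt{d}$ combines with $\|A\|_{\op}^n \det A^{-d/2}$ and the $(d/(2\pi))^{1/2}$ from the density to telescope into harmless polynomial-in-$d$ prefactors, all of which are beaten by the $1/d$ we can extract from the super-Gaussian tail.

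The main obstacle I anticipate is the careful bookkeeping of how the hypotheses $\frac{n\delta_G^4}{\sigma_G} = O\!\left(\frac{\log d}{dt_{p,d}^4}\right)$ and $\frac{\tau_G}{\sigma_G} = O\!\left(\frac{\log d}{dt_{p,d}^3}\right)$ enter — these are precisely the error terms that distinguish $\varphi_V$ from the Gaussian $\varphi_M$ at the relevant scales, and they must be tracked so that the exponent $\frac{d}{4}\log\!\left(1 + c t_{p,d}^2\|X\|_{\Fr}^2\right)$ is not degraded below $\frac{1}{2}dt_{p,d}^2\|X\|_{\Fr}^2(1 - o(1))$ on the bulk of each shell, while still leaving room to subtract off $\sigma_G\log(1/p_0) \asymp \sigma_G d t_{p,d}^2$ worth of the prefactor $p_0^{\sigma_G}$ and an extra $\log d$ to produce the $1/d$. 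I would handle this by: first reducing to the estimate $|\varphi_V(\gamma^G)| \le p_0^{\sigma_G}\cdot(\text{poly } d)\cdot \det A^{-d/2}\|A\|_{\op}^n (1 + ct_{p,d}^2\|X\|_{\Fr}^2)^{n/2 - d/4}$; then choosing $\kappa = \sqrt{C\sigma_G}$ so that on $\|X\|_{\Fr}^2 > 2C\sigma_G$ the exponent $\frac{d}{4}\log(1+ct_{p,d}^2\|X\|_{\Fr}^2) \ge dt_{p,d}^2\|X\|_{\Fr}^2/8 \ge C\sigma_G dt_{p,d}^2/4$ (valid since $t_{p,d}^2\kappa^2 = C\sigma_G t_{p,d}^2 = o(1)$ means we are in the regime where $\log(1+u) \ge u/2$ fails — so instead I use $\log(1+u)\ge u\log 2$ for $u \le 1$, i.e. the quadratic Gaussian regime, which is exactly what makes the $\sigma_G = o(t_{p,d}^{-2})$ hypothesis natural); and finally integrating the resulting $e^{-c' d t_{p,d}^2 \|X\|_{\Fr}^2}$ over ${\Offdiag}^G$ and comparing $c' d t_{p,d}^2 C\sigma_G$ against $\sigma_G$ (volume entropy) and against $\log d$. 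Choosing $C$ large relative to these comparisons gives $B(\kappa) = O(p_0^{\sigma_G}/d)$.
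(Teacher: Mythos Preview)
Your proposal contains two interlocking errors that make the argument fail as written.

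First, your invocation of Corollary \ref{density} is incorrect. That corollary bounds the \emph{full} density
\[
  \exp\!\left(-\frac{\gamma(x)^2}{2d}-it_{p,d}\gamma(x)\right)\frac{\gamma'(x)}{2\pi i p_0\gamma(x)}
  \;\le\;
  \sqrt{\frac{dt_{p,d}^2}{2\pi}}\,e^{o(1)-\frac{dt_{p,d}^2}{2}x^2},
\]
and the Gaussian decay $e^{-\frac{dt_{p,d}^2}{2}x^2}$ comes entirely from the factor $\exp(-\gamma^2/(2d))=\varphi_M(\gamma)$ per coordinate. Stripping that factor out, the remaining piece $\bigl|e^{-it_{p,d}\gamma}\gamma'/(2\pi i\gamma)\bigr|$ is bounded by $e^{-dt_{p,d}^2}/(2\pi)$ per coordinate --- a constant in $x$, with no decay whatsoever. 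So your claimed Gaussian factor $e^{-\frac{dt_{p,d}^2}{2}\|X\|_{\Fr}^2}$ is simply not there. All of the $X$-decay must come from the $(1+ct_{p,d}^2\|X\|_{\Fr}^2)^{n/2-d/4}$ term in Corollary \ref{absbound}.

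Second, and more seriously, you dismiss $\det A^{-d/2}=\det(\Id-t_{p,d}Y)^{-d/2}$ by saying ``$\det A=1+o(1)$''. But that $o(1)$ is of order $t_{p,d}^2\sigma_G$ (since $\log\det(\Id-t_{p,d}Y)\approx -\tfrac12 t_{p,d}^2\tr Y^2\approx -t_{p,d}^2\sigma_G$), and raising to the power $-d/2$ gives a factor $e^{\Theta(dt_{p,d}^2\sigma_G)}$, which is comparable to $p_0^{-\sigma_G}$. This enormous factor must be cancelled against the $\prod_{(j,k)}e^{-dt_{p,d}^2Y_{jk}}\approx e^{-dt_{p,d}^2\sigma_G}$ coming from the phase factor, leaving $(e^{-dt_{p,d}^2/2}/\pi)^{\sigma_G}\approx p_0^{\sigma_G}$ up to terms of size $e^{O(\sigma_G\log d)}$. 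It is precisely in controlling the \emph{higher-order} terms of $\log\det(\Id-t_{p,d}Y)^{-d/2}$ --- namely $\tfrac{d}{6}t_{p,d}^3\tr Y^3$ (triangles) and $O(ndt_{p,d}^4\|Y\|_{\op}^4)$ (max degree to the fourth) --- that the hypotheses $\tau_G/\sigma_G=O(\log d/(dt_{p,d}^3))$ and $n\delta_G^4/\sigma_G=O(\log d/(dt_{p,d}^4))$ are used. You flagged these hypotheses as ``the main obstacle'' but never actually located where they enter; by writing $\det A=1+o(1)$ you erased exactly the place in the argument where they are needed.

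The paper's proof does not use dyadic shells: after combining $\det(\Id-t_{p,d}Y)^{-d/2}$ with the phase factor as above to produce the $p_0^{\sigma_G}$ prefactor, it integrates $(1+t_{p,d}^2\|X\|_{\Fr}^2/(1+2t_{p,d}\delta_G)^2)^{n/2-d/4}$ directly in spherical coordinates over $\|X\|_{\Fr}>\sqrt{2}\kappa$, obtaining a closed-form tail bound $\propto(1+t_{p,d}^2\kappa^2)^{\sigma_G/2+n/2-d/4}$. With $\kappa^2=C\sigma_G$ and $\sigma_G t_{p,d}^2=o(1)$ this is $e^{-c\,C\,dt_{p,d}^2\sigma_G}$, which for $C$ large beats the accumulated $e^{O(\sigma_G\log d)}$ error (using $\log d=O(dt_{p,d}^2)$) and leaves a spare $1/d$.
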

\begin{proof}
By Proposition \ref{odeprop} and our assumptions on $G$, we see that $\|\Image\gamma^G\|_{\op}=O(d\delta_Gt_{p,d}) =  o(d)$ and thus we can apply the results of Corollary \ref{absbound}.
So we have
\begin{align*}
&\left|\varphi_V(\gamma^G)\prod_{(j,k)\in G}\frac{e^{-it_{p,d}\gamma^G_{jk}}}{2\pi i\gamma^G_{jk}}\frac{\partial\gamma^G_{jk}}{\partial X_{jk}}\right| \\  \leq&\left(\frac{\Gamma(d/2+1)e^{\frac{d}{2}}}{2\sqrt{\pi} (d/2)^{\frac{d}{2}}}\right)^n\frac{\Gamma\left(\frac{d-2n}{4}\right)}{\Gamma(d/4)}\frac{\|I-t_{p,d}Y\|_{\op}^n}{\det(I-t_{p,d}Y)^{\frac{d}{2}}}\left(1+\frac{t_{p,d}^2\left\|X\right\|_{\Fr}^2}{\|I-t_{p,d}Y\|_{\op}^2}\right)^{\frac{n}{2}-\frac{d}{4}}\prod_{(j,k)\in G}\frac{e^{-dt_{p,d}^2Y_{jk}}}{2\pi |\gamma^G_{jk}|}\left|\frac{\partial\gamma^G_{jk}}{\partial X_{jk}}\right|. 
\end{align*}
We bound this as follows. By Stirling's formula, 
$$\left(\frac{\Gamma(d/2+1)e^{\frac{d}{2}}}{2\sqrt{\pi} (d/2)^{\frac{d}{2}}}\right)^n\frac{\Gamma\left(\frac{d-2n}{4}\right)}{\Gamma(d/4)}\leq\exp\left(\frac{n^2}{2(d-2n)}\right)=e^{O(\sigma_G\log d)}.$$
Since the entries of $Y$ are all bounded above by $1+\frac{1}{dt_{p,d}^2}$, we have that $$\|I-t_{p,d}Y\|_{\op}^n\leq(1+2t_{p,d}\delta_G)^n\leq e^{nt_{p,d}\delta_G}= e^{O(\sigma_G\log d)}$$
and
\begin{align*}
\log\det(I-t_{p,d}Y)^{-\frac{d}{2}}&\leq\tr(dt_{p,d}^2Y^2/4)+\tr(dt_{p,d}^3Y^3/6)+100ndt_{p,d}^4\|Y\|_{\op}^4 \\
&\leq\big(1+1/(dt_{p,d}^2)\big)^2\sigma_G dt_{p,d}^2/2+\big(1+1/(dt_{p,d}^2)\big)^3dt_{p,d}^3\tau_G+200ndt_{p,d}^4\delta_G^4 \\ 
&=\sigma_Gdt_{p,d}^2/2+O(\sigma_G\log d).
\end{align*}
As defined in Proposition \ref{ode}, the curve $\gamma(x)=dt_{p,d}(x-iy)$ satisfies 
$y\geq1$ and $\left|\frac{\gamma'}{\gamma}\right|\leq1$.
So 
$$\det(I-t_{p,d}Y)^{-\frac{d}{2}}\prod_{(j,k)\in G}\frac{e^{-dt_{p,d}^2Y_{jk}}}{2\pi |\gamma^G_{jk}|}\left|\frac{\partial\gamma^G_{jk}}{\partial X_{jk}}\right|=\left(\frac{e^{-dt_{p,d}^2/2}}{\pi}\right)^{\sigma_G}e^{O({\sigma_G}\log d)}.$$
Putting all of this together gives us a bound of 
$$
\left|\varphi_V(\gamma^G)\prod_{(j,k)\in G}\frac{e^{-it_{p,d}\gamma^G_{jk}}}{2\pi i\gamma^G_{jk}}\frac{\partial\gamma^G_{jk}}{\partial X_{jk}}\right|=
\left(\frac{e^{-dt_{p,d}^2/2}}{\pi}\right)^{\sigma_G}
e^{O(\sigma_G\log d)}
\left(1+\frac{t_{p,d}^2\left\|X\right\|_{\Fr}^2}{(1+2t_{p,d}\delta_G)^2}\right)^{\frac{n}{2}-\frac{d}{4}}.$$
We can write 
$$\int_{\{X\in{\Offdiag}^G:\|X\|_{\Fr}>\sqrt{2}\kappa\}}\left(1+\frac{t_{p,d}^2\left\|X\right\|_{\Fr}^2}{(1+2t_{p,d}\delta_G)^2}\right)^{\frac{n}{2}-\frac{d}{4}}\mathrm{d}X=\int_A(1+\alpha^2\|x\|_2^2)^{-q}\mathrm{d}x$$
where $A:=\{x\in\R^{\sigma_G}:\|x\|_2>\kappa\}$, and we set $q=\frac{d-2n}{4}$ and $\alpha=\frac{\sqrt{2}t_{p,d}}{1+2t_{p,d}\delta_G}$. The presence of the $\sqrt{2}$ in the expression for $\alpha$ is to account for the symmetry of $X$. Therefore this integral can be bounded as 
\begin{align*}
\int_A(1+\alpha^2\|x\|_2^2)^{-q}\mathrm{d}x&=\frac{2\pi^{{\sigma_G}/2}\alpha^{-{\sigma_G}}}{\Gamma({\sigma_G}/2)}\int_{\alpha\kappa}^\infty\frac{r^{{\sigma_G}-1}}{(1+r^2)^q}\mathrm{d}r \\
&\leq\frac{2\pi^{{\sigma_G}/2}\alpha^{-{\sigma_G}}}{\Gamma({\sigma_G}/2)}\int_{\alpha\kappa}^\infty\frac{r}{(1+r^2)^{q-{\sigma_G}/2+1}}\mathrm{d}r \\
&=\frac{\pi^{{\sigma_G}/2}\alpha^{-{\sigma_G}}}{(q-{\sigma_G}/2)\Gamma({\sigma_G}/2)}(1+(\alpha\kappa)^2)^{{\sigma_G}/2-q}.
\end{align*}
So altogether we have 
\begin{align*}
B(\kappa)&\leq\int_{\{X\in{\Offdiag}^G:\|X\|_{\Fr}>\sqrt{2}\kappa\}}\left|\varphi_V(\gamma^G)\prod_{(j,k)\in G}\frac{e^{-it_{p,d}\gamma^G_{jk}}}{2\pi i\gamma^G_{jk}}\frac{\partial\gamma^G_{jk}}{\partial X_{jk}}\right| \\
&\leq e^{O({\sigma_G}\log d)}\left(\frac{e^{-dt_{p,d}^2/2}}{\sqrt{\pi}t_{p,d}}\right)^{\sigma_G}\frac{(1+t_{p,d}^2\kappa^2)^{{\sigma_G}/2+n/2-d/4}}{(d-2n-2{\sigma_G})\Gamma({\sigma_G}/2)} \\ &\leq e^{O({\sigma_G}\log d)}\left(\frac{e^{-dt_{p,d}^2/2}}{\sqrt{\pi dt_{p,d}^2/2}}\right)^{\sigma_G}\frac{(1+t_{p,d}^2\kappa^2)^{-d/5}}{(d-2n-2{\sigma_G})\Gamma({\sigma_G}/2)} \\
&\leq p_0^{\sigma_G}\frac{e^{O({\sigma_G}\log d)}(1+t_{p,d}^2\kappa^2)^{-d/5}}{(d-2n-2{\sigma_G})\Gamma({\sigma_G}/2)}. 
\end{align*}
Here we use the fact that $\sigma_G=o(t_{p,d}^{-2})$  and $\kappa^2=C\sigma_G$ to write $$(1+\kappa^2t_{p,d}^2)^{-d/5}=e^{C(1+o(1))(dt_{p,d}^2\sigma_G/5)}.$$ 
Since $dt_{p,d}^2=\Omega\left(\log d\right)$, we can choose $C$ large enough that $B(\kappa)=O(p_0^{\sigma_G}/d)$. 

\end{proof}
\subsection{Contribution from Small $\|X\|_{\Fr}$}
Let 
$$K:=\{X\in{\Offdiag}^G:\|X\|_{\Fr}\leq\sqrt2 \kappa\}$$ where $\kappa=\sqrt{C{\sigma_G}}$ is as in the previous lemma. Then
\begin{align*}
&\Re\int_K\varphi_V(\gamma^G)\prod_{(j,k)\in G}\frac{e^{-it_{p,d}\gamma^G_{jk}}}{2\pi i\gamma^G_{jk}}\frac{\partial\gamma^G_{jk}}{\partial X_{jk}}\mathrm{d}X \\ 
=&\Re\int_K\left(1-1+\frac{\varphi_V(\gamma^G)}{\varphi_M(\gamma^G)}\right)\varphi_M(\gamma^G)\prod_{(j,k)\in G}\frac{e^{-it_{p,d}\gamma^G_{jk}}}{2\pi i\gamma^G_{jk}}\frac{\partial\gamma^G_{jk}}{\partial X_{jk}}\mathrm{d}X \\  
\leq&p_0^{\sigma_G}+p_0^{\sigma_G}\int_K\left|\frac{\varphi_V(\gamma^G)}{\varphi_M(\gamma^G)}-1\right|\varphi_M(\gamma^G)\prod_{(j,k)\in G}\frac{e^{-it_{p,d}\gamma^G_{jk}}}{2\pi ip_0\gamma^G_{jk}}\frac{\partial\gamma^G_{jk}}{\partial X_{jk}}\mathrm{d}X. 
\end{align*} 
By the previous lemma, we also have 
\begin{align*}
&\Re\int_K\varphi_V(\gamma^G)\prod_{(j,k)\in G}\frac{e^{-it_{p,d}\gamma^G_{jk}}}{2\pi i\gamma^G_{jk}}\frac{\partial\gamma^G_{jk}}{\partial X_{jk}}\mathrm{d}X \\ 
=&\Re\int_K\left(1-1+\frac{\varphi_V(\gamma^G)}{\varphi_M(\gamma^G)}\right)\varphi_M(\gamma^G)\prod_{(j,k)\in G}\frac{e^{-it_{p,d}\gamma^G_{jk}}}{2\pi i\gamma^G_{jk}}\frac{\partial\gamma^G_{jk}}{\partial X_{jk}}\mathrm{d}X \\  \geq&(1-o(d^{-1}))p_0^{\sigma_G}-p_0^{\sigma_G}\int_K\left|\frac{\varphi_V(\gamma^G)}{\varphi_M(\gamma^G)}-1\right|\varphi_M(\gamma^G)\prod_{(j,k)\in G}\frac{e^{-it_{p,d}\gamma^G_{jk}}}{2\pi ip_0\gamma^G_{jk}}\frac{\partial\gamma^G_{jk}}{\partial X_{jk}}\mathrm{d}X. 
\end{align*} 
Thus 
$$\big|\Pr\big[\mathcal{G}(n,p,d)\supseteq G\big]-p_0^{\sigma_G}\big|\leq p_0^{\sigma_G}\left(o(d^{-1})+\int_K\left|\frac{\varphi_V(\gamma^G)}{\varphi_M(\gamma^G)}-1\right|\varphi_M(\gamma^G)\prod_{(j,k)\in G}\frac{e^{-it_{p,d}\gamma^G_{jk}}}{2\pi ip_0\gamma^G_{jk}}\frac{\partial\gamma^G_{jk}}{\partial X_{jk}}\mathrm{d}X. \right).$$
As we noted in Corollary \ref{density}, $$f_M(X):=\varphi_M(\gamma^G)\prod_{(j,k)\in G}\frac{e^{-it_{p,d}\gamma^G_{jk}}}{2\pi ip_0\gamma^G_{jk}}\frac{\partial\gamma^G_{jk}}{\partial X_{jk}}$$
is a product probability density, with each marginal density bounded above by a constant times the density of a centered normal with variance $\frac{1}{dt_{p,d}^2}$. So if we let $T$ have density $f_M$ and set $\Theta:=\gamma^G(T)$, we are interested in 
showing 
\begin{align}\label{mainerror}
\E\left[\left|\frac{\varphi_V(\Theta)}{\varphi_M(\Theta)}-1\right|\one\{\Theta\in\gamma^G(K)\}\right] \to 0.
\end{align}
To progress, we should start by bounding $\left|\frac{\varphi_V(\Theta)}{\varphi_M(\Theta)}-1\right|$ for fixed $\Theta\in\gamma^G(K)$. 
If we have $\|\Theta\|_{\op}< d$, then we can apply Lemma \ref{expectationdef}. Letting $U\in\D$ have density $\eta_d$, 
%
%
%
%
we can write 
\begin{align}\label{rat}
\varphi_V(\Theta)/\varphi_M(\Theta)-1&=i^{-n}\E\left[\big(R(U,\Theta)-1\big)\det\beta'(U)|\Theta\right], 
\end{align}
where 
$$R(U,\Theta):=\det\left(I-\frac{i}{d}\Theta\beta(U)^{-1}\right)^{-\frac{d}{2}}\etr\left(\frac{1}{4d}\Theta^2\right)$$ for fixed $\Theta\in\gamma^G(K)$. Before we start bounding $(\ref{rat})$, we will find the following lemma helpful for controlling expectations of functions of $U$.
\begin{lemma}{\label{normalcomp}}
For all $x\in\R$, we have $\eta_d(x)\leq\sqrt{\frac{d}{4\pi}}\exp\left(-\frac{d}{4}x^2+O(d^{-1})\right)$. 
\end{lemma}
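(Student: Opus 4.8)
The plan is to factor out the $x$-independent prefactor and control the two pieces separately. Write $\eta_d(x) = c_d\,\bigl(e^{x\cot x}\,\sinc x\bigr)^{d/2}$ with $c_d := \frac{\Gamma(d/2+1)}{2\pi(d/2)^{d/2}}$, and note $\eta_d$ is supported on $(-\pi,\pi)$, so the bound is trivial for $|x|\ge\pi$ and we may assume $x\in(-\pi,\pi)$. First I would pin down $c_d$ by Stirling's formula: since $\Gamma(d/2+1) = \sqrt{\pi d}\,(d/2)^{d/2}e^{-d/2}\bigl(1+O(d^{-1})\bigr)$, one gets $c_d = \sqrt{\tfrac{d}{4\pi}}\,e^{-d/2}\bigl(1+O(d^{-1})\bigr)$, hence $\log c_d = \tfrac12\log\tfrac{d}{4\pi} - \tfrac d2 + O(d^{-1})$. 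It then suffices to show the deterministic inequality
\[
  x\cot x + \log\frac{\sin x}{x} \;\le\; 1 - \frac{x^2}{2}, \qquad x\in(-\pi,\pi),
\]
since multiplying by $d/2$ and adding $\log c_d$ gives exactly $\log\eta_d(x) \le \tfrac12\log\tfrac{d}{4\pi} - \tfrac d4 x^2 + O(d^{-1})$.

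The key step is to prove this inequality by power series. Both sides are even and real-analytic on $(-\pi,\pi)$, so it is enough to compare Taylor coefficients. From the classical expansion $x\cot x - 1 = -2\sum_{k\ge1}\zeta(2k)\pi^{-2k}x^{2k}$ and, via the Weierstrass product $\frac{\sin x}{x} = \prod_{n\ge1}\bigl(1-\tfrac{x^2}{n^2\pi^2}\bigr)$, the expansion $\log\frac{\sin x}{x} = -\sum_{k\ge1}\frac{\zeta(2k)}{k\pi^{2k}}x^{2k}$ (both valid and absolutely convergent for $|x|<\pi$), adding yields
\[
  x\cot x + \log\frac{\sin x}{x} - 1 \;=\; -\sum_{k\ge1}\Bigl(2+\frac1k\Bigr)\frac{\zeta(2k)}{\pi^{2k}}\,x^{2k}.
\]
The $k=1$ term is $-3\,\zeta(2)\pi^{-2}x^2 = -\tfrac{x^2}{2}$ because $\zeta(2) = \pi^2/6$ and $2+1=3$, and every remaining coefficient is strictly negative; so the left side is $\le -\tfrac{x^2}{2}$ throughout $(-\pi,\pi)$, which is the claim. (Termwise summation is legitimate by absolute convergence of the double sum $\sum_{k,n}\tfrac1k n^{-2k}\pi^{-2k}|x|^{2k}$ for $|x|<\pi$.)

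Putting the two pieces together completes the proof for $x\in(-\pi,\pi)$, and the $|x|\ge\pi$ case is vacuous. The only genuinely delicate point is that the quadratic term must come out as exactly $-x^2/2$ so that the $e^{-dx^2/4}$ in the statement is matched with no slack at second order; this is precisely the arithmetic coincidence $3\zeta(2)/\pi^2 = 1/2$ above, and once it is observed the higher coefficients only help. If one prefers to avoid $\zeta$-values, an alternative is to set $\phi(x):=1-\tfrac{x^2}{2}-x\cot x-\log\sinc x$, verify $\phi(0)=0$, and show $\phi'(x) = \tfrac{1-x^2}{x} + \tfrac{x-\sin 2x}{\sin^2 x}\ge 0$ on $(0,\pi)$ by a short monotonicity estimate; the series argument is cleaner and I would use it.
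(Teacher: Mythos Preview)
Your proof is correct and follows exactly the same approach as the paper: Stirling's approximation for the prefactor together with the pointwise inequality $x\cot x + \log\sinc x \le 1 - \tfrac{x^2}{2}$. The paper simply asserts this inequality without proof, whereas you supply a clean justification via the $\zeta(2k)$ power series; the arithmetic coincidence $3\zeta(2)/\pi^2 = 1/2$ that you highlight is indeed the reason the quadratic coefficient matches exactly.
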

\begin{proof}
This follows by Stirling's approximation and writing $x\cot x+\log\sinc x\leq1-\frac{x^2}{2}$. 
\end{proof}
Apropos of this bound and the bound in Corollary \ref{density}, we have the following lemma and corollary. 
\begin{lemma}{\label{subgaussian}}
Let $X_1,...,X_N$ be i.i.d. centered real random variables with density function $f$ satisfying 
$$f(x)\leq\frac{Ce^{-\frac{x^2}{2{s}^2}}}{\sqrt{2\pi{s}^2}}$$
for a fixed constant $C$. Set $X_*=\max_{j\leq N}|X_j|$. Then for $t<{s}^{-1}$ we have 
$$\E\exp\left(t^2X_*^2/2\right)\leq\frac{\exp\left(t^2{s}^2\log(CN)\right)}{1-t^2{s}^2}.$$
\end{lemma}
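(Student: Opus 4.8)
The plan is to estimate $\E\exp(t^2X_*^2/2)$ via the layer-cake identity together with a union bound on the tail of $X_*$. A crude route would bound $\exp(t^2X_*^2/2)\le\sum_j\exp(t^2X_j^2/2)$ and use $\E\exp(t^2X_j^2/2)\le C\,\E\exp(t^2Z^2/2)=C(1-t^2s^2)^{-1/2}$ for $Z\sim\mathcal N(0,s^2)$, giving $\E\exp(t^2X_*^2/2)\le CN(1-t^2s^2)^{-1/2}$; but this is much weaker than the claimed $(CN)^{t^2s^2}(1-t^2s^2)^{-1}$ when $t^2s^2$ is small. So instead I would keep the tail of $X_*$ and integrate it carefully, splitting the integral at the threshold where the union bound stops beating the trivial bound $1$.

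First I would record that, since $f$ is a probability density with $f(x)\le C(2\pi s^2)^{-1/2}e^{-x^2/(2s^2)}$, integrating in $x$ forces $C\ge1$, hence $CN\ge1$. Next, with $Z\sim\mathcal N(0,s^2)$, the pointwise domination of densities gives $\Pr[|X_1|>a]\le C\,\Pr[|Z|>a]$ for all $a\ge0$; combined with the sharp Gaussian tail bound $\Pr[Z>a]\le\tfrac12 e^{-a^2/(2s^2)}$ (so $\Pr[|Z|>a]\le e^{-a^2/(2s^2)}$) and a union bound over $j\le N$, this yields
\[
  \Pr[X_*>a]\le\min\bigl(1,\;CN\,e^{-a^2/(2s^2)}\bigr),\qquad a\ge0.
\]

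Then I would set $Y:=\exp(t^2X_*^2/2)\ge1$ and use $\E Y=1+\int_1^\infty\Pr[Y>u]\,du$. Since $\{Y>u\}=\{X_*>t^{-1}\sqrt{2\log u}\}$, the bound above becomes $\Pr[Y>u]\le\min(1,CN\,u^{-\beta})$ with $\beta:=(t^2s^2)^{-1}>1$ (using $t<s^{-1}$). Setting $u_0:=(CN)^{1/\beta}=\exp(t^2s^2\log(CN))\ge1$ and splitting $\int_1^\infty=\int_1^{u_0}+\int_{u_0}^\infty$, the first integrand is bounded by $1$ and contributes $u_0-1$, while the second is bounded by $CN\,u^{-\beta}$ and contributes $CN\,u_0^{1-\beta}/(\beta-1)=u_0/(\beta-1)$ after using $CN\,u_0^{-\beta}=1$. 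Adding the leading $1$,
\[
  \E\exp(t^2X_*^2/2)\le u_0+\frac{u_0}{\beta-1}=u_0\cdot\frac{\beta}{\beta-1}=\frac{u_0}{1-t^2s^2}=\frac{\exp(t^2s^2\log(CN))}{1-t^2s^2},
\]
which is the assertion.

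I do not expect a genuine obstacle: the computation is elementary once the tail bound is in hand, and the gain over the crude estimate comes entirely from splitting $\int_1^\infty$ at $u_0$ rather than applying a single bound. The only points needing a moment's care are checking $CN\ge1$ (so that $u_0\in[1,\infty)$ and the layer-cake split is over a legitimate range) and invoking the Gaussian tail with the sharp constant $\tfrac12$, which is exactly what makes $\Pr[|X_1|>a]\le C\,e^{-a^2/(2s^2)}$ hold with no stray factor of $2$, hence yields the clean exponent $t^2s^2\log(CN)$ rather than $t^2s^2\log(2CN)$.
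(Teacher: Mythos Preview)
Your argument is correct and follows the same overall strategy as the paper---layer cake plus a split at the threshold where the union bound equals $1$---but your execution is cleaner. The paper keeps the exact Gaussian tail $\Pr[|X_1|\ge x]\le C\,\erfc(x/\sqrt{2}s)$, integrates by parts twice, and arrives at the expression $\frac{CN}{\sqrt{1-t^2s^2}}\erfc\!\bigl(\sqrt{1-t^2s^2}\,\erfc^{-1}(1/CN)\bigr)$, which it then bounds by an asymptotic estimate on the ratio $\erfc(\sqrt{1-t^2s^2}\,x)/\erfc(x)$ valid only for large $x$. You instead invoke the elementary inequality $\Pr[|Z|>a]\le e^{-a^2/(2s^2)}$ at the outset, which converts the tail of $Y=\exp(t^2X_*^2/2)$ into a pure power law $\min(1,CN\,u^{-\beta})$ and makes the remaining integral a two-line computation. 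What you lose is a tiny bit of sharpness (the paper's intermediate erfc expression is slightly smaller before it is bounded); what you gain is a proof that is fully rigorous for all $CN\ge1$ with no asymptotic caveat, and no need for erfc manipulations at all.
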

\begin{proof}
Set $g(x):=\exp\left(\frac{t^2x^2}{2}\right)$. Integrating by parts, applying a union bound, and then another integration by parts gives us \begin{align*}
\E g(X_*)&=g(0)+\int_0^\infty g'(x)\Pr[X_*\geq x]dx \\
&\leq g(0)+\int_0^\infty g'(x)\max\left(1,CN\erfc\left(\frac{x}{\sqrt2{s}}\right)\right)dx \\ 
&=\sqrt{\frac{2}{\pi{s}^2}}CN\int_{\sqrt{2}{s}\erfc^{-1}\left(\frac{1}{CN}\right)}^\infty \exp\left(-({s}^{-2}-t^2)x^2/2)\right)dx \\ 
&=\frac{CN}{\sqrt{1-t^2{s}^2}}\erfc\left(\sqrt{1-t^2{s}^2}\erfc^{-1}\left(\frac{1}{CN}\right)\right).
\end{align*}
It can be seen by expanding the function 
$\frac{\erfc(\sqrt{1-t^2{s}^2}x)}{\sqrt{1-t^2{s}^2}\erfc(x)}$
around $x=\infty$ that 
$$\frac{\erfc(\sqrt{1-t^2{s}^2}x)}{\sqrt{1-t^2{s}^2}\erfc(x)}\leq\frac{e^{t^2{s}^2x^2}}{1-t^2{s}^2}$$
for large $x$. Taking $x=\erfc^{-1}\left(\frac{1}{CN}\right)\leq\sqrt{\log(CN)}$ gives the desired result. 
\end{proof}
\begin{corollary}\label{subgaussianmoments}
Under the same assumptions as the previous lemma, For each $q\geq1$ there are constants $C_q$ independent of $N$, $C$, and ${s}$ such that
$\E X_*^q\leq C_q\left({s}^2\log(CN)\right)^{q/2}$. 
\end{corollary}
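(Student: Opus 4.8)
The plan is to upgrade the exponential moment bound of Lemma~\ref{subgaussian} to polynomial moment bounds by a standard comparison of $x^q$ with a Gaussian weight. First I would record the elementary pointwise inequality
\[
  |x|^q \le \Bigl(\tfrac{q}{2e\lambda}\Bigr)^{q/2} e^{\lambda x^2},
  \qquad x \in \R,\ \lambda > 0,
\]
which follows by maximizing $|x|^q e^{-\lambda x^2}$ over $x \ge 0$ (the maximum is attained at $x^2 = q/(2\lambda)$). Taking $\lambda = t^2/2$, applying this with $x = X_*$, and taking expectations gives
\[
  \E X_*^q \le \Bigl(\tfrac{q}{e t^2}\Bigr)^{q/2}\, \E\exp\bigl(t^2 X_*^2/2\bigr),
\]
so everything reduces to choosing $t$ and invoking Lemma~\ref{subgaussian}.

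The natural choice is $t^2 = \tfrac{1}{2 s^2 \log(CN)}$; we may assume $\log(CN) \ge 1$, which holds in every application (and otherwise one reads $\log(CN)$ as $\max(1,\log(CN))$, to which the same argument applies). With this choice $t^2 s^2 = \tfrac{1}{2\log(CN)} \le \tfrac12$, in particular $t < s^{-1}$, so Lemma~\ref{subgaussian} yields
\[
  \E\exp\bigl(t^2 X_*^2/2\bigr)
  \le \frac{\exp\bigl(t^2 s^2 \log(CN)\bigr)}{1 - t^2 s^2}
  = \frac{e^{1/2}}{1 - \tfrac{1}{2\log(CN)}}
  \le 2 e^{1/2}.
\]
Since $\bigl(\tfrac{q}{e t^2}\bigr)^{q/2} = \bigl(\tfrac{2q}{e}\bigr)^{q/2}\bigl(s^2 \log(CN)\bigr)^{q/2}$, combining the last two displays gives the claimed bound with $C_q = 2 e^{1/2}\bigl(\tfrac{2q}{e}\bigr)^{q/2}$, a constant depending on $q$ alone.

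I do not expect any real obstacle: the only points needing attention are verifying that the chosen $t$ lies in the admissible range $t < s^{-1}$ required by Lemma~\ref{subgaussian}, and checking that the degenerate small-$CN$ regime does not spoil the uniformity of $C_q$ in $N$, $C$, and $s$ — both of which are handled as above.
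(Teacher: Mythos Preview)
Your argument is correct: the pointwise bound $|x|^q\le (q/(2e\lambda))^{q/2}e^{\lambda x^2}$ combined with Lemma~\ref{subgaussian} at the scale $t^2=\tfrac{1}{2s^2\log(CN)}$ gives exactly the stated moment bound with $C_q=2e^{1/2}(2q/e)^{q/2}$, and your check that $t^2s^2\le\tfrac12$ keeps you inside the range of validity of the lemma. The paper gives no proof at all for this corollary, treating it as immediate from Lemma~\ref{subgaussian}; your write-up is the standard way to make that passage explicit. Your remark about reading $\log(CN)$ as $\max(1,\log(CN))$ is also well-placed, since the statement as written is vacuous (indeed false) when $CN\le e$, and every application in the paper has $CN\to\infty$.
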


The following is the main remaining technical component of the proof.
\begin{proposition}\label{Abound}
If $G$ satisfies 
\[
  {(n\log n+\sigma_G)}\delta_G^2 = o(d^{-1}t_{p,d}^{-4}),
  \quad
  \sqrt{{\sigma_G}}\tau_G = o(d^{-1}t_{p,d}^{-3}),
  \quad\text{and}
  \quad \log n = O(dt_{p,d}^2),
\]
  then 
\[
\E\left[\left|\frac{\varphi_V(\Theta)}{\varphi_M(\Theta)}-1\right|\one\{\Theta\in\gamma^G(K)\}\right]
=o(1).
\]
\end{proposition}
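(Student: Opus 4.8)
Fix $\Theta=\gamma^G(X)$ for $X\in K$, so $\|X\|_{\Fr}\le\sqrt 2\kappa$ with $\kappa^2=C\sigma_G$, and by Proposition \ref{odeprop} the entries of $Y$ lie in $[1,1+o(1)]$. A short computation from the hypotheses gives $\|\Theta\|_{\op}=O\big(dt_{p,d}(\delta_G+\sqrt{\sigma_G\delta_G})\big)=o(d)$ and $\|\Theta\|_{\Fr}^2=O\big((dt_{p,d})^2(\sigma_G+\|X\|_{\Fr}^2)\big)$, so Lemma \ref{expectationdef} applies for every $X\in K$ and \eqref{rat} gives
\[
  \frac{\varphi_V(\Theta)}{\varphi_M(\Theta)}-1
  = i^{-n}\,\E\big[(R(U,\Theta)-1)\det\beta'(U)\,\big|\,\Theta\big],
\]
with $U$ of density $\eta_d$, independent of $\Theta$. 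Write $B:=\beta(U)^{-1}=\Id+E$ with $E$ diagonal; since $|\beta(u)^{-1}|=|\sinc u|\le 1$ we always have $\|B\|_{\op}\le 1$, $\|E\|_{\op}\le 2$, and $|E_{jj}|=O(|U_{jj}|)$, so the estimates below need no auxiliary ``bad event'' in $U$. The plan is to bound $\E\big[|R(U,\Theta)-1|\,|\det\beta'(U)|\,\big|\,\Theta\big]$ and then integrate over $\Theta=\gamma^G(T)$, $T\sim f_M$.

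The engine is the log-determinant expansion, legitimate because $\|\tfrac id\Theta B\|_{\op}\le\tfrac1d\|\Theta\|_{\op}=o(1)$:
\[
  \log R(U,\Theta)
  = \frac d2\sum_{m\ge1}\frac{i^m}{m\,d^m}\,\tr\big((\Theta B)^m\big)+\frac1{4d}\tr\Theta^2 .
\]
Two cancellations make this small. The $m=1$ term vanishes since $\Theta$ has zero diagonal, so $\tr(\Theta B)=\sum_j\Theta_{jj}B_{jj}=0$; and substituting $B=\Id+E$ in the $m=2$ term produces $-\tfrac1{4d}\tr\Theta^2$, which cancels the $+\tfrac1{4d}\tr\Theta^2$ coming from $\varphi_M$. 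What remains is
\[
  \log R
  = -\frac1{2d}\tr(\Theta^2E)
    -\frac1{4d}\tr\big((\Theta E)^2\big)
    -\frac{i}{6d^2}\tr\big((\Theta B)^3\big)
    +\sum_{m\ge4}\frac{i^m}{2m\,d^{m-1}}\tr\big((\Theta B)^m\big),
\]
and then $R-1=\log R+\tfrac12(\log R)^2+\cdots$, $\det\beta'(U)=1+O\big(\sum_jU_{jj}^2\big)$.

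I would then bound each piece by matrix norms and moment estimates. Here $\tr(\Theta^2E)=\sum_j(\Theta^2)_{jj}E_{jj}$ and $\tr((\Theta E)^2)=\sum_{j\ne k}\Theta_{jk}^2E_{jj}E_{kk}$ (the $j=k$ part killed again by $\Theta_{jj}=0$), so $|\tr(\Theta^2E)|\le\|E\|_{\op}\|\Theta\|_{\Fr}^2$, $|\tr((\Theta E)^2)|\le\|E\|_{\op}^2\|\Theta\|_{\Fr}^2$, and in general $|\tr((\Theta B)^m)|\le\|\Theta\|_{\op}^{m-2}\|\Theta\|_{\Fr}^2$, so $\sum_{m\ge4}$ is a convergent geometric series of size $\lesssim d^{-3}\|\Theta\|_{\op}^2\|\Theta\|_{\Fr}^2$; for $m=3$ one isolates $\tr\Theta^3$, supported on triangles of $G$, whence $|\tr\Theta^3|\le 6\tau_G\max_{(j,k)\in G}|\Theta_{jk}|^3\lesssim\tau_G(dt_{p,d})^3(1+X_*)^3$ with $X_*:=\max_{(j,k)\in G}|X_{jk}|$, the remainder of $\tr((\Theta B)^3)$ being $O\big(\|E\|_{\op}\|\Theta\|_{\op}\|\Theta\|_{\Fr}^2\big)$. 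Expanding $R-1$ and $\det\beta'(U)$ as above, every surviving summand of $(R-1)\det\beta'(U)$ is a product of such quantities, which I would control via Cauchy--Schwarz in $U$ ($\E_U|E_{jj}|^q\lesssim_q d^{-q/2}$, $\E_U|\det\beta'(U)|^2\le e^{O(n/d)}=O(1)$), and then integrate over $\Theta=\gamma^G(T)$: by Corollary \ref{density} $f_M$ is a product density with sub-Gaussian marginals of scale $(dt_{p,d}^2)^{-1/2}$, so Corollary \ref{subgaussianmoments} gives $\E\|X\|_{\Fr}^2\lesssim\sigma_G(dt_{p,d}^2)^{-1}$ and $\E X_*^q\lesssim_q\big((\log n)(dt_{p,d}^2)^{-1}\big)^{q/2}$, while Lemma \ref{normalcomp} and Corollary \ref{subgaussianmoments} give $\E_U U_*^q\lesssim_q\big((\log n)/d\big)^{q/2}$ for $U_*:=\max_j|U_{jj}|$. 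Carried through, the leading contributions to $\E\big[|\varphi_V(\Theta)/\varphi_M(\Theta)-1|\,\one\{\Theta\in\gamma^G(K)\}\big]$ are of orders $\big(dt_{p,d}^4\delta_G\sigma_G\big)^{1/2}$ and $t_{p,d}^2\sigma_G^2$ (from the linear- and quadratic-in-$E$ terms, via $\E_X\sum_j|(\Theta^2)_{jj}|^2\lesssim(dt_{p,d})^4\delta_G\sigma_G$), $dt_{p,d}^4\delta_G^2\sigma_G$ (from the $m\ge4$ tail, using $\|\Theta\|_{\op}\lesssim dt_{p,d}\delta_G$ typically), $\log$-dressed $n$-terms such as $\tfrac{n\log n}{d}t_{p,d}^2\sigma_G$ (from the $\det\beta'-1$ correction hitting the quadratic term), and $dt_{p,d}^3\tau_G$ together with the cross-term $dt_{p,d}^5\sigma_G\tau_G$ (from the triangle piece); all higher-order terms are smaller. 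The hypothesis $(n\log n+\sigma_G)\delta_G^2=o(d^{-1}t_{p,d}^{-4})$ forces the first group to $o(1)$, $\sqrt{\sigma_G}\tau_G=o(d^{-1}t_{p,d}^{-3})$ the triangle group, and $\log n=O(dt_{p,d}^2)$ absorbs the remaining logarithmic factors; dominated convergence then yields the claim.

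\noindent\textbf{Main obstacle.} After the two structural cancellations, the remainder is a long product-series in $E$ and in the deformation curve $\gamma$, and the crux is to organise and bound it so that no single contribution exceeds exactly what the three hypotheses permit. This forces one to keep every estimate sharp --- Frobenius norm wherever a $\tr(\cdot^2)$ is available, operator norm for the tail of the $m$-series, the triangle count for $\tr\Theta^3$, the scale $(dt_{p,d}^2)^{-1/2}$ for the marginals of $f_M$, and $\big((\log n)/d\big)^{1/2}$ for the maxima $U_*$; in particular, recognising that the dominant error is of size $(dt_{p,d}^4\delta_G\sigma_G)^{1/2}$ rather than larger requires genuinely using $\Theta_{jj}=0$ inside $\tr(\Theta^2E)$ together with the near-Gaussian concentration of $X$, not merely crude norm bounds.
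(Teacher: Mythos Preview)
Your framework is the same as the paper's: you invoke Lemma \ref{expectationdef}, separate $\det\beta'(U)$ via Cauchy--Schwarz (with the bound $\E|\det\beta'(U)|^2\le e^{2n/d}$), and identify exactly the same log--determinant expansion with the two cancellations, arriving at the paper's Lemma \ref{logbound}, namely $L=\log R=\tfrac{i}{2d}\tr(\Theta^2U)+E(U,\Theta)$ with the stated error.  Your moment estimates for $U_*$, $\Theta_*$ and $\|\Theta\|_{\Fr}$ are also correct and match what the paper uses.

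The gap is in the passage from $L$ to $R-1$.  You write $R-1=L+\tfrac12 L^2+\cdots$ and propose to bound each term; but this series is not controllable here because $|L|$ is not small on $K$.  Deterministically on $K$ one only has $\|\Theta\|_{\Fr}^2=O(\sigma_G d^2t_{p,d}^2)$, so the leading piece $\tfrac{1}{2d}\tr(\Theta^2E)$ is merely $O(\sigma_G dt_{p,d}^2)$, which in the regime of Theorem \ref{mainmain} is $\Theta(\sigma_G\log p^{-1})$, typically enormous.  Even on typical $U$ (where $\|E\|_{\op}\asymp U_*\asymp t_{p,d}$) the crude bound $|\tr(\Theta^2E)|\le U_*\|\Theta\|_{\Fr}^2$ gives $|L|=O(\sigma_G dt_{p,d}^3)$, which is $\omega(1)$ e.g.\ when $\sigma_G\sim n$ and $d\sim n\operatorname{polylog} n$.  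What \emph{is} small is the second moment: $\tfrac{1}{2d}\tr(\Theta^2U)$ is a sum of independent mean--zero pieces in $U$ and has variance $O(dt_{p,d}^4\delta_G\sigma_G)=o(1)$, which is precisely your claimed leading order.  But ``small in $L^2$'' does not let you sum $\E|L|^k/k!$, because the sub--Gaussian variance proxy of that sum (conditional on $\Theta\in\gamma^G(K)$) is governed by $\sum_j|(\Theta^2)_{jj}|^2$, which on the worst of $K$ is far from $o(1)$.

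The paper handles exactly this obstruction by \emph{not} expanding $e^L-1$.  Instead it uses the identity
\[
  |R-1|^2=(e^{L_r}-1)^2+2e^{L_r}(1-\cos L_i),
\]
bounds the first part by $e^{2L_r}-2L_r-1$ and the second by $e^{L_r}L_i^2$, and then controls $\E e^{2L_r}$ directly as a moment generating function (Lemma \ref{bound1}) together with $-\E L_r$ (Lemma \ref{bound2}) and $\E L_i^4$ (Lemma \ref{bound3}).  The point is that the potentially large contribution to $L$ lands in the imaginary part $L_i$ and enters only through $1-\cos L_i\le L_i^2/2$, which is bounded regardless of the size of $L_i$; meanwhile $L_r$ is linear in $U$ to leading order, so its MGF is tractable.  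Your ``higher--order terms are smaller'' assertion is therefore the missing step, and it cannot be justified without this (or an equivalent) exponential--moment argument.
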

\begin{proof}
Note first of all that for $\Theta\in\gamma^G(K)$, we have 
\[
  \|\Theta\|_{\op}
  \leq\|\Theta\|_{\Fr}
  = O(dt_{p,d}\sqrt{{\sigma_G}})
  = o\left(\frac{d}{\delta_G\sqrt{d}t_{p,d}}\right)
  = o(d).
\]
Thus we are justified in using (\ref{rat}) whenever $\Theta\in\gamma^G(K)$. 
Towards bounding the modulus of (\ref{rat}) in this case, we have 
\begin{align*}
\left|\varphi_V(\Theta)/\varphi_M(\Theta)-1\right|&\leq\sqrt{\E|\det\beta'(U)|^2\E[|R(U,\Theta)-1|^2|\Theta]} \\
&=\sqrt{\E|\det\beta'(U)|^2}\sqrt{\E\big[\big|R(U,\Theta)|-1\big|^2\rvert\Theta\big]+2\E\left[|R(U,\Theta)|-\Re R(U,\Theta)\rvert\Theta\right]}.
\end{align*}
We can do away with this first factor as follows. Note that $\eta_d(U)=\left(2\pi\left(1-\frac{2}{d}\right)^{\frac{d}{2}-1}\right)^n\eta_2(U)\eta_{d-2}(U)$ and $\eta_2(U)|\det\beta'(U)|^2\leq\eta_2(0)|\det\beta'(0)|^2=\left(\frac{e}{2\pi}\right)^n$. Thus 
$$\E|\det\beta'(U)|^2\leq\int_{\D_\pi}\left(e\left(1-\frac{2}{d}\right)^{\frac{d}{2}-1}\right)^n\eta_{d-2}(U)\mathrm{d}U\leq\int_{\D_\pi}e^{\frac{2n}{d}}\eta_{d-2}(U)\mathrm{d}U=e^{\frac{2n}{d}}.$$
Having done this, bounding (\ref{rat}) reduces to bounding  \begin{align*}
&e^{\frac{n}{d}}\sqrt{\E\big[\big|R(U,\Theta)|-1\big|^2\rvert\Theta\big]+2\E\left[|R(U,\Theta)|-\Re R(U,\Theta)\rvert\Theta\right]}
\end{align*}
for $\Theta\in\gamma^G(K)$.
Then by Jensen's inequality, bounding (\ref{mainerror}) is reduced to bounding
\begin{align}\label{inrat}
&e^{\frac{n}{d}}\sqrt{\E\left[\left(\big|R(U,\Theta)|-1\big|^2+2\big(|R(U,\Theta)|-\Re R(U,\Theta)\big)\right)\one\{\Theta\in\gamma^G(K)\}\right]}.
\end{align}
To do this, we first need an estimate of $L=L(U,\Theta):=\log R(U,\Theta)$. Let $L_r$ and $L_i$ be the real and imaginary parts of $L$ respectively. Then we have 
\begin{align*}
&\E\left[\left(\big|R(U,\Theta)|-1\big|^2+2\big(|R(U,\Theta)|-\Re R(U,\Theta)\big)\right)\one\{\Theta\in\gamma^G(K)\}\right] \\
=&\E\left[\left(e^{2L_r}-2e^{L_r}+1+2e^{L_r}(1-\cos L_i)\right)\one\{\Theta\in\gamma^G(K)\}\right] \\ 
\leq&\E\left[\left(e^{2L_r}-2L_r-1+e^{L_r}L_i^2\right)\one\{\Theta\in\gamma^G(K)\}\right] \\ 
\leq&\E\left[\left(e^{2L_r}-2L_r-1\right)\one\{\Theta\in\gamma^G(K)\}\right]+\sqrt{\E\left[e^{2L_r}\one\{\Theta\in\gamma^G(K)\}\right]\E\left[L_i^4\one\{\Theta\in\gamma^G(K)\}\right]}.
\end{align*}
We now finish by computing some expectations of $L$.  We have put these expectations into lemmas whose proofs we delay, and we show how these lemmas complete the proof.
By Lemmas \ref{bound1} and \ref{bound2}, 
$$\E\left[\left(e^{2L_r}-2L_r-1\right)\one\{\Theta\in\gamma^G(K)\}\right]=O\left(dt_{p,d}^4\sigma_G\delta_G^2+\sqrt{\sigma_G}dt_{p,d}^3\tau_G\right).$$
By Lemma \ref{bound3}, 
$$\sqrt{\E\left[e^{2L_r}\one\{\Theta\in\gamma^G(K)\}\right]\E\left[L_i^4\one\{\Theta\in\gamma^G(K)\}\right]}=O\left(d^2t_{p,d}^{6}\max\left(\frac{\delta_G^2n\log n}{dt_{p,d}^2},\,\sigma_G^2\delta_G^4t_{p,d}^2,\,\tau_G^2\right)\right).$$
Since our assumptions on $G$ imply that $dt_{p,d}^4\sigma_G^2\delta_G^2+\sqrt{\sigma_G}dt_{p,d}^3\tau_G=o(1)$, the only term from this maximum that could compete with $dt_{p,d}^4\sigma_G\delta_G^2+\sqrt{\sigma_G}dt_{p,d}^3\tau_G$ is $ndt_{p,d}^4\delta_G^2\log n$. So in total, 
$$\E\left[\left|\frac{\varphi_V(\Theta)}{\varphi_M(\Theta)}-1\right|\one\{\Theta\in\gamma^G(K)\}\right]=O\left(\sqrt{dt_{p,d}^3(t_{p,d}\delta_G^2(n\log n+ {\sigma_G})+\sqrt{{\sigma_G}}\tau_G)}\right)=o(1).$$
\end{proof}

\begin{lemma}\label{logbound}
Let $U_*:=\max_{j\in[n]}|U_j|$ and $\Theta_*:=\max_{(j,k)\in G}|\Theta_{jk}|$. Then, for $\Theta$ with $\|\Theta\|_{\op}< d$, we have 
$$L(U,\Theta)=\frac{i}{2d}\tr\left(\Theta^2U\right)+E(U,\Theta)$$ where $$|E(U,\Theta)| = O\left(\frac{\|\Theta\|_{\Fr}^2}{d}U_*^2+\frac{\delta_G^2\|\Theta\|_{\Fr}^2}{d^3}\Theta_*^2+\frac{\tau_G}{d^2}\Theta_*^3\right).$$
\end{lemma}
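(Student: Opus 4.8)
The plan is to expand the logarithm in $L(U,\Theta)=\log R(U,\Theta)=-\tfrac{d}{2}\log\det\bigl(\Id-\tfrac{i}{d}\Theta\beta(U)^{-1}\bigr)+\tfrac{1}{4d}\tr(\Theta^2)$ as a power series and to track where each of the three error terms comes from. Since $\beta(U)^{-1}$ is the diagonal matrix with entries $\sinc(U_j)e^{-iU_j}$, all of modulus $\le1$, the operator norm of $M:=\tfrac{i}{d}\Theta\beta(U)^{-1}$ is at most $\tfrac1d\|\Theta\|_{\op}<1$, so $\log\det(\Id-M)=-\sum_{k\ge1}\tfrac1k\tr(M^k)$ converges. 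Taylor expanding the scalar $\beta(x)^{-1}=\sinc(x)e^{-ix}=1-ix+c(x)$, the remainder $c$ is analytic with $c(0)=c'(0)=0$ and is bounded on $(-\pi,\pi)$, hence $|c(x)|\le Cx^2$ there for an absolute constant $C$ (analyticity at $0$ plus compactness away from $0$); as diagonal matrices this gives $\beta(U)^{-1}=\Id-iU+C(U)$ with $\|C(U)\|_{\op}\le CU_*^2$. I would then write $M=\tfrac{i}{d}\Theta+\tfrac1d\Theta U+\tfrac{i}{d}\Theta C(U)=:N_0+N_1$ and collect the series $-\tfrac d2\log\det(\Id-M)=\tfrac d2\sum_{k\ge1}\tfrac1k\tr(M^k)$ degree by degree.

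The $k=1$ term vanishes because $\Theta$ has zero diagonal and $U,C(U)$ are diagonal, so $\Theta,\Theta U,\Theta C(U)$ all have zero diagonal. At $k=2$, the term $\tfrac d4\tr(N_0^2)=-\tfrac1{4d}\tr(\Theta^2)$ exactly cancels the $+\tfrac1{4d}\tr(\Theta^2)$ inherited from $\varphi_M$; the cross term $\tfrac d4\cdot2\,\tr\bigl(\tfrac{i}{d}\Theta\cdot\tfrac1d\Theta U\bigr)=\tfrac{i}{2d}\tr(\Theta^2U)$ is precisely the claimed main term; and every remaining quadratic term has a factor $U$ or $C(U)$ in each of its two $\Theta$-slots, so by the identity $\tr(\Theta A\Theta B)=\sum_{j,k}\Theta_{jk}^2A_{kk}B_{jj}$ for diagonal $A,B$, together with $|c(U_j)|\le CU_*^2$ and $|U_j|\le\pi$, each is $O\bigl(U_*^2\|\Theta\|_{\Fr}^2/d\bigr)$ — the first error term. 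At $k=3$, the leading piece is $-\tfrac{i}{6d^2}\tr(\Theta^3)$, and since terms of $\tr(\Theta^3)=\sum_{j,k,l}\Theta_{jk}\Theta_{kl}\Theta_{lj}$ with a repeated index vanish, this sum runs over ordered triangles of $G$, giving $|\tr(\Theta^3)|\le6\tau_G\Theta_*^3$; the same triangle bookkeeping for the remaining cubic terms (which replace some $\Theta$ by $\Theta U$ or $\Theta C(U)$, and where $|U_j|\le\pi$ absorbs the extra factors) shows all of them are $O\bigl(\tau_G\Theta_*^3/d^2\bigr)$ — the third error term. Finally, for $k\ge4$ I would bound crudely: $|\tr(M^k)|\le\|M\|_{\Fr}^2\|M\|_{\op}^{k-2}$ (Cauchy–Schwarz on the trace plus submultiplicativity of $\|\cdot\|_{\op}$), then use $\|M\|_{\Fr}\le\tfrac1d\|\Theta\|_{\Fr}$ and $\|M\|_{\op}\le\tfrac1d\|\Theta\|_{\op}\le\tfrac1d\delta_G\Theta_*$ to get $|\tr(M^k)|\le\tfrac{\|\Theta\|_{\Fr}^2\delta_G^2\Theta_*^2}{d^4}\bigl(\tfrac{\|\Theta\|_{\op}}{d}\bigr)^{k-4}$, and sum the geometric series (convergent since $\|\Theta\|_{\op}<d$) to obtain $O\bigl(\delta_G^2\|\Theta\|_{\Fr}^2\Theta_*^2/d^3\bigr)$ — the second error term. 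Adding the three buckets yields the stated bound for $E(U,\Theta)$.

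The routine work is the degree-by-degree bookkeeping; the points needing care are (i) the uniform bound $|c(x)|\le Cx^2$ on $(-\pi,\pi)$; (ii) the operator/Frobenius-norm inequalities for the complex symmetric $\Theta$, in particular $|\tr(\Theta^k)|\le\|\Theta\|_{\op}^{k-2}\|\Theta\|_{\Fr}^2$ (from $\|\Theta^j\|_{\Fr}\le\|\Theta\|_{\op}^{j-1}\|\Theta\|_{\Fr}$ and $|\tr(AB)|\le\|A\|_{\Fr}\|B\|_{\Fr}$) and $\|\Theta\|_{\op}\le\delta_G\Theta_*$ (Schur test, since $\Theta$ is supported on the edges of $G$ with entries of modulus $\le\Theta_*$); and (iii) recognizing $\tr(\Theta^3)$, $\tr(\Theta^3U)$ and their variants as sums over triangles of $G$. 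The main obstacle I anticipate is purely organizational: ensuring every term of every degree lands in exactly one of the three buckets with the correct power of $d$. I note also that the geometric-tail step implicitly needs $\|\Theta\|_{\op}/d$ bounded away from $1$ for the implicit constant; this is automatic in the regime where the lemma is applied, where $\|\Theta\|_{\op}=o(d)$.
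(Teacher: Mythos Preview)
Your proposal is correct and follows essentially the same route as the paper: expand $-\tfrac{d}{2}\log\det(\Id-\tfrac{i}{d}\Theta\beta(U)^{-1})$ in powers of $\tfrac{i}{d}\Theta\beta(U)^{-1}$, extract the quadratic piece (which furnishes the cancellation with $\tfrac{1}{4d}\tr\Theta^2$, the main term $\tfrac{i}{2d}\tr(\Theta^2U)$, and the $U_*^2$ error), bound the cubic piece by triangle counting, and control the tail $k\ge4$ by $\|\Theta\|_{\Fr}^2\|\Theta\|_{\op}^2/d^3$ together with $\|\Theta\|_{\op}\le\delta_G\Theta_*$. The only organizational difference is that the paper keeps $\Theta\beta(U)^{-1}$ intact for the cubic and tail terms (using $|\beta(U)^{-1}_{jj}|\le1$ directly), whereas you first decompose $\beta(U)^{-1}=\Id-iU+C(U)$ and then regroup; both lead to the same three error buckets. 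Your remark that the geometric tail needs $\|\Theta\|_{\op}/d$ bounded away from $1$ for a uniform implicit constant is a point the paper leaves implicit, and your invocation of the Schur test for $\|\Theta\|_{\op}\le\delta_G\Theta_*$ makes explicit a step the paper absorbs silently into its final display.
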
 
\begin{proof}
Since
$\|\Theta\|_{\op}< d$, we have 
\begin{align*} 
\det\left(I-\frac{i}{d}\Theta\beta(U)^{-1}\right)^{-\frac{d}{2}} =\etr\left(-\frac{1}{4d}\big(\Theta\beta(U)^{-1}\big)^2-\frac{i}{6d^2}\big(\Theta\beta(U)^{-1}\big)^3\right)\exp\left(O\left(\frac{\|\Theta\|_{\op}^2\|\Theta\|_{\Fr}^2}{d^3}\right)\right). 
\end{align*}
Therefore 
$$\left|L-\frac{i}{2d}\tr\left(\Theta^2U\right)\right|\leq\frac{1}{4d}\left|\tr\left(\Theta^2(I-2iU)-(\Theta\beta(U)^{-1})^2\right)\right|+\frac{1}{6d^2}\left|\tr\big(\Theta\beta(U)^{-1}\big)^3\right|+O\left(\frac{\|\Theta\|_{\op}^2\|\Theta\|_{\Fr}^2}{d^3}\right).$$
To bound this first term, we write
$$\tr\left((\Theta\beta^{-1})^2\right)=\sum_{j,k}\Theta_{jk}^2\beta_{jj}^{-1}\beta_{kk}^{-1}=\sum_{j,k}\Theta_{jk}^2(1-iU_{jj}-iU_{kk}+O(U_*^2))=\tr\big(\Theta^2(I-2iU)\big)+O\left(\|\Theta\|_{\Fr}^2U_*^2\right).$$
For the cubic term, we have 
\begin{align*}
\frac{1}{6d^2}\left|\tr\big(\Theta\beta(U)^{-1}\big)^3\right|\leq\frac{1}{6d^2}\sum_{j,k,\ell}|\Theta_{jk}\Theta_{k\ell}\Theta_{\ell j}\beta^{-1}_{jj}\beta^{-1}_{kk}\beta^{-1}_{\ell\ell}|\leq\frac{\tau_G}{d^2}\Theta_*^3.
\end{align*}
So 
$$\left|L-\frac{i}{2d}\tr\left(\Theta^2U\right)\right|=O\left(\frac{\|\Theta\|_{\Fr}^2}{d}U_*^2+\frac{\tau_G}{d^2}\Theta_*^3+\frac{\delta_G^2\|\Theta\|_{\Fr}^2}{d^3}\Theta_*^2\right).$$
\end{proof}
\begin{lemma}\label{bound1}
Under the assumptions of Proposition \ref{Abound}, 
$$\E[|R|^2\one\{\Theta\in\gamma^G(K)\}]=
1+O\left(dt_{p,d}^4{\sigma_G}\delta_G^2+dt_{p,d}^3\sqrt{{\sigma_G}}\tau_G\right).$$
\end{lemma}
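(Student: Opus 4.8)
The plan is to write $|R|^2 = \exp(2L_r)$ with $L_r = \Re L(U,\Theta)$ and to peel off from $L$ the part linear in the diagonal entries of $U$. On $\gamma^G(K)$ we have $\|\Theta\|_{\op} < d$ (as in the proof of Proposition \ref{Abound}), so Lemma \ref{logbound} applies: $L = \tfrac{i}{2d}\tr(\Theta^2 U) + E$. Parametrizing the contour as $\Theta_{jk} = dt_{p,d}(X_{jk} - iY_{jk})$ with $Y_{jk} = y(X_{jk})$, a direct computation gives $\Re\big(\tfrac{i}{2d}\tr(\Theta^2 U)\big) = -\tfrac{1}{2d}\Im\tr(\Theta^2 U) = dt_{p,d}^2\sum_j U_{jj} c_j =: S$, where $c_j := \sum_{k:\{j,k\}\in G} X_{jk}Y_{jk}$. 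Thus $2L_r = 2S + 2\Re E$, and since $2|\Re E| \le 2|E| =: \mathcal E$ we get the sandwich $e^{2S-\mathcal E} \le |R|^2 \le e^{2S+\mathcal E}$, reducing the lemma to controlling $e^{2S}$ and $\mathcal E$ separately. Throughout let $\varepsilon := dt_{p,d}^4\sigma_G\delta_G^2 + dt_{p,d}^3\sqrt{\sigma_G}\tau_G$ denote the target error.

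For $e^{2S}$: condition on $T$ (hence on $X,Y,\Theta$), so $S = \sum_j c_j' U_{jj}$ with $c_j' := dt_{p,d}^2 c_j$ deterministic and the $U_{jj}$ i.i.d.\ with density $\eta_d$, which by Lemma \ref{normalcomp} is supported on $(-\pi,\pi)$ and dominated by $e^{O(1/d)}$ times the $\Normal(0,2/d)$ density. Computing the Gaussian moment generating function, $\E_U[e^{\lambda S}] \le e^{O(n/d)}\exp\big(\lambda^2 d t_{p,d}^4\|c\|^2\big)$ for every $\lambda$. On $K$, Cauchy--Schwarz (every vertex has degree $\le\delta_G$) together with $Y_{jk}\le 1+(dt_{p,d}^2)^{-1}\le 2$ and $\|X\|_{\Fr}^2\le 2C\sigma_G$ give $\|c\|^2 = O(\delta_G\sigma_G)$, so this exponent is $O(\lambda^2 dt_{p,d}^4\sigma_G\delta_G) + O(n/d) = O(\varepsilon)$ for fixed $\lambda$ (using $\sigma_G\delta_G^2 = o(d^{-1}t_{p,d}^{-4})$, $n\le 2\sigma_G$, and $dt_{p,d}^2\to\infty$). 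Since also $\E_U[e^{\lambda S}]\ge e^{\lambda\E_U S} = 1$ by Jensen ($\eta_d$ being even), and $\Pr[K^c] = \Pr[\|X\|_{\Fr}^2 > 2C\sigma_G]$ is negligible relative to $\varepsilon$ by Gaussian concentration for the Gaussian-dominated marginals of Corollary \ref{density} (this is where $\log d = O(dt_{p,d}^2)$ is used, via $C$ large), we obtain $\E[e^{2S}\one_K] = 1 + O(\varepsilon)$ and $\|e^{2S}\one_K\|_2 = \E[e^{4S}\one_K]^{1/2} = 1 + O(\varepsilon)$.

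For $\mathcal E$: it suffices to show $\|E\one_K\|_2 = O(\varepsilon)$ and $\E[e^{8|E|}\one_K] = O(1)$, for then $e^x - 1\le xe^x$ on $[0,\infty)$ and Cauchy--Schwarz give $\E[e^{4|E|}\one_K] \le 1 + 4\|E\one_K\|_2\,\E[e^{8|E|}\one_K]^{1/2} = 1 + O(\varepsilon)$, i.e.\ $\|e^{2|E|}\one_K\|_2 = 1+O(\varepsilon)$. Decompose $E$ into its three contributions from Lemma \ref{logbound}. On $K$ one has $\|\Theta\|_{\Fr}^2 = O(d^2 t_{p,d}^2\sigma_G)$ and $\Theta_* := \max_{\{j,k\}\in G}|\Theta_{jk}| \le dt_{p,d}(X_* + 2)$ with $X_* := \max_{\{j,k\}\in G}|X_{jk}|$; Corollary \ref{subgaussianmoments} (applicable because $\eta_d$ and the marginals of $T$ are Gaussian-dominated with proxy variances $2/d$ and $(dt_{p,d}^2)^{-1}$) bounds the moments of $U_* := \max_j|U_{jj}|$ and of $X_*$. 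Substituting, each of $\E\big[(\tfrac{\|\Theta\|_{\Fr}^2}{d}U_*^2)^2\one_K\big]$, $\E\big[(\tfrac{\delta_G^2\|\Theta\|_{\Fr}^2}{d^3}\Theta_*^2)^2\one_K\big]$, $\E\big[(\tfrac{\tau_G}{d^2}\Theta_*^3)^2\one_K\big]$ is $O(\varepsilon^2)$, using the hypotheses of Proposition \ref{Abound} and the crude bound $\log\sigma_G\le 2\log n = O(dt_{p,d}^2)$. For the exponential moment, factor $e^{8|E|}\le e^{8|E_1|}e^{8|E_2|}e^{8|E_3|}$ and apply Hölder; using $\Theta_*^2 \le 2(dt_{p,d})^2(X_*^2+4)$ and, for the cubic piece, the deterministic bound $X_*^3 \le \sqrt{2C\sigma_G}\,X_*^2$ valid on $K$, each factor reduces to $\E[e^{\mu U_*^2}]$ or $\E[e^{\mu X_*^2}]$ with $\mu$ of order $dt_{p,d}^2\sigma_G$, $dt_{p,d}^4\delta_G^2\sigma_G$, or $dt_{p,d}^3\sqrt{\sigma_G}\tau_G$ (times the proxy variance), all $o(1)$ by the hypotheses, whence Lemma \ref{subgaussian} bounds each factor by $1+o(1)$.

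Finally, Cauchy--Schwarz gives the upper bound $\E[|R|^2\one_K] \le \E[e^{2S}e^{2|E|}\one_K] \le \|e^{2S}\one_K\|_2\|e^{2|E|}\one_K\|_2 = (1+O(\varepsilon))^2$, and from $|R|^2 \ge e^{2S}(1-2|E|)$ we get $\E[|R|^2\one_K]\ge \E[e^{2S}\one_K] - 2\|e^{2S}\one_K\|_2\|E\one_K\|_2 = 1 - O(\varepsilon)$, which is the claim. I expect the genuine obstacle to be the exponential-moment estimate $\E[e^{8|E|}\one_K] = O(1)$ in the third paragraph: it cannot be obtained from the deterministic bound $\Theta_*\le\|\Theta\|_{\Fr} = O(dt_{p,d}\sqrt{\sigma_G})$ on $K$ (which leaves exponents proportional to $\sigma_G$), so one must exploit the sub-Gaussian tail of $X_*=\max|X_{jk}|$ from Corollary \ref{density}, and verifying that the three resulting coefficients are $o(1)$ is exactly where each hypothesis of Proposition \ref{Abound} is used.
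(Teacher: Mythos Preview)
Your proof is correct and follows essentially the same path as the paper's: decompose $\log R$ via Lemma \ref{logbound} into the piece linear in $U$ and the remainder $E$, control the linear part through the sub-Gaussian MGF coming from Lemma \ref{normalcomp}, and control the remainder via Lemma \ref{subgaussian} applied to $U_*$ and $\Theta_*$. The organization differs slightly---you keep $S$ and $E$ separate throughout and recombine by Cauchy--Schwarz at the end, whereas the paper splits by Cauchy--Schwarz earlier, conditionally on $\Theta$---but the substance is the same (and your Cauchy--Schwarz bound $\|c\|^2=O(\delta_G\sigma_G)$ correctly carries the degree factor that the paper's line $\sum_k|T_{jk}|\le\sqrt{\sum_k|T_{jk}|^2}$ drops; this does not affect the final estimate).

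One remark: you additionally supply the matching lower bound, which the paper states but does not argue. That step uses $\Pr[K^c]=O(\varepsilon)$, for which you invoke $\log d=O(dt_{p,d}^2)$; this is a hypothesis of Theorem \ref{mainmain} but is not literally listed among the assumptions of Proposition \ref{Abound}. Since only the upper bound on $\E[|R|^2\one_K]$ is used in Proposition \ref{Abound} (the combination $e^{2L_r}-2L_r-1$ being nonnegative), this is harmless.
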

\begin{proof}
By the previous lemma, 
we can choose a constant $C>0$ such that
\begin{align*}
|R|^2&\leq\exp\left(-\frac{1}{d}\Image\tr(\Theta^2U)+C\left(\frac{\|\Theta\|_{\Fr}^2}{d}U_*^2+\frac{\tau_G}{d^2}\Theta_*^3+\frac{\delta_G^2\|\Theta\|_{\Fr}^2}{d^3}\Theta_*^2\right)\right) \\ 
&\leq\exp\left(-\frac{1}{d}\sum_j\left(\sum_k\Image\Theta_{jk}^2\right)U_{jj}+C\left(\frac{\|\Theta\|_{\Fr}^2}{d}U_*^2+\frac{\tau_G\|\Theta\|_{\Fr}\Theta_*^2}{d^2}+\frac{\delta_G^2\|\Theta\|_{\Fr}^2}{d^3}\Theta_*^2\right)\right).
\end{align*}
Assuming $\Theta\in\gamma^G(K)$, we know that $\|\Theta\|_{\Fr}\leq O(\sqrt{{\sigma_G}}dt_{p,d})$. So by Cauchy--Schwarz and the independence of the entries of $U$,
\begin{align*}
&\E\left[\exp\left(-\frac{1}{d}\sum_j\left(\sum_k\Image\Theta_{jk}^2\right)U_{jj}+C'\sigma_G dt_{p,d}^2U_*^2\right)\Bigg\vert\Theta\right] \\
\leq&\sqrt{\E\exp\left(2C'\sigma_G dt_{p,d}^2U_*^2\right)\prod_{j}\E\left[\exp\left(-\frac{2}{d}\left(\sum_k\Image\Theta_{jk}^2\right)U_{jj}\right)\bigg\vert\Theta\right]} 
\end{align*}
for some other constant $C'$ and $\Theta\in\gamma^G(K)$. This first expectation can be bounded via Lemma \ref{subgaussian} by 
$$
\frac{\exp\left(O\left(\sigma_G t_{p,d}^2\log n)\right)\right)}
{1-O(\sigma_G t_{p,d}^2)}
=
\exp\left(O\left(\sigma_G t_{p,d}^2\log n)\right)\right)=
\exp\left(O\left(\sigma_G dt_{p,d}^4\right)\right)
$$
since ${\sigma_G}t_{p,d}^2< {\sigma_G}dt_{p,d}^4\delta_G^2+\sqrt{{\sigma_G}}dt_{p,d}^3\tau_G=o(1)$ and $\log n = O(dt_{p,d}^2)$.
Recall that we are writing $\Theta=\gamma^G(T)$ where $T$ is a random matrix with density
$$f_M(X):=\varphi_M(\gamma^G)\prod_{(j,k)\in G}\frac{e^{-it_{p,d}\gamma^G_{jk}}}{2\pi ip_0\gamma^G_{jk}}\frac{\partial\gamma^G_{jk}}{\partial X_{jk}}\leq\prod_{(j,k)\in G}\sqrt{\frac{C^2dt_{p,d}^2}{2\pi}}e^{-\frac{dt_{p,d}^2}{2}X_{jk}^2}$$
for an absolute constant $C$. So writing $\Theta=\gamma^G(T)$ with $T\in K$, 
we have 
$$\sum_k\Image\Theta_{jk}^2\leq 3d^2t_{p,d}^2\sum_k|T_{jk}|\leq3d^2t_{p,d}^2\sqrt{\sum_k|T_{jk}|^2}.$$
So by Lemma \ref{normalcomp} we have 
\begin{align*}
\prod_j\E\left[\exp\left(-\frac{2}{d}\left(\sum_k\Image\Theta_{jk}^2\right)U_{jj}\right)\Bigg\vert\Theta\right]&=\exp\left(O\left(\frac{n}{d}\right)+\frac{4}{d^3}\sum_j\left(\sum_k\Image\Theta_{jk}^2\right)^2\right) \\
&\leq\exp\left(O\left(\frac{n}{d}\right)+36dt_{p,d}^4\|T\|_{\Fr}^2\right)=\exp\left(O\left(dt_{p,d}^4\sigma_G\right)\right).
\end{align*}
We have succeeded in showing that
$$\E[|R|^2|\Theta]=\exp\left(O\left(dt_{p,d}^4\sigma_G+\frac{t_{p,d}^2\sigma_G\delta_G^2+t_{p,d}\sqrt{\sigma_G}\tau_G}{d}\Theta_*^2\right)\right)$$
for $\Theta\in\gamma^G(K)$ from which it follows that 
$$\E[|R|^2|T]=\exp\left(O\left(dt_{p,d}^4\sigma_G\right)+C(dt_{p,d}^4\sigma_G\delta_G^2+dt_{p,d}^3\sqrt{\sigma_G}\tau_G)(1+o(1)+T_*^2)\right)$$
for $T\in K$ and a fixed constant $C$.
Since $\E T^2\log \sigma_G = O(\frac{\log n}{dt_{p,d}^2}) = O(1)$, by Lemma \ref{subgaussian} we have
$$\E[|R|^2\one\{T\in K\}]=\frac{\exp\left(O\left(dt_{p,d}^4\sigma_G\delta_G^2+dt_{p,d}^3\sqrt{\sigma_G}\tau_G\right)\right)}{1-O\left(\sqrt{\sigma_G}t_{p,d}\left(t_{p,d}\sqrt{\sigma_G}\delta_G^2+\tau_G\right)\right)}$$
Since our assumptions on $G$ imply that $dt_{p,d}^4\sigma_G\delta_G^2+dt_{p,d}^3\sqrt{\sigma_G}\tau_G = o(1)$, the result now follows.
\end{proof}
\begin{lemma}\label{bound2}
Under the assumptions of Proposition \ref{Abound}, 
$$-\E[L_r(U,\Theta)\one\{\Theta\in\gamma^G(K)\}]=O\left(dt_{p,d}^4\sigma_G\delta_G^2+\tau_Gdt_{p,d}^3\right).$$
\end{lemma}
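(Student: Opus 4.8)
The plan is to read off $L_r$ from the expansion in Lemma \ref{logbound} and to exploit an exact cancellation in its leading term. As observed at the start of the proof of Proposition \ref{Abound}, every $\Theta\in\gamma^G(K)$ satisfies $\|\Theta\|_{\op}\leq\|\Theta\|_{\Fr}=O(dt_{p,d}\sqrt{\sigma_G})=o(d)$, so Lemma \ref{logbound} applies and, writing $E=E(U,\Theta)$ for its error term,
\[
  L_r(U,\Theta)=\Re\Bigl(\tfrac{i}{2d}\tr(\Theta^2U)\Bigr)+\Re E=-\frac{1}{2d}\Im\tr(\Theta^2U)+\Re E .
\]
Consequently
\[
  -\E\bigl[L_r(U,\Theta)\one\{\Theta\in\gamma^G(K)\}\bigr]
  =\frac{1}{2d}\E\bigl[\Im\tr(\Theta^2U)\,\one\{\Theta\in\gamma^G(K)\}\bigr]
  -\E\bigl[\Re E\,\one\{\Theta\in\gamma^G(K)\}\bigr],
\]
and I would bound the two pieces separately.

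For the first piece, write $\tr(\Theta^2U)=\sum_j\bigl(\sum_k\Theta_{jk}^2\bigr)U_{jj}$, using that $\Theta$ is symmetric and $U$ diagonal. Here $U$ is the random diagonal matrix of Lemma \ref{expectationdef}, which is sampled independently of $T$ and hence of $\Theta=\gamma^G(T)$; the indicator $\one\{\Theta\in\gamma^G(K)\}$ equals $\one\{T\in K\}$, a function of $T$ alone; and each $U_{jj}$ has the even density $\eta_d$, so $\E U_{jj}=0$. Since on $\{T\in K\}$ the quantity $\sum_k\Theta_{jk}^2$ is deterministically bounded by $\|\Theta\|_{\Fr}^2=O(\sigma_G d^2t_{p,d}^2)$, everything in sight is integrable and each term in the sum obeys
\[
  \E\Bigl[\bigl(\textstyle\sum_k\Theta_{jk}^2\bigr)U_{jj}\,\one\{T\in K\}\Bigr]
  =\E\Bigl[\bigl(\textstyle\sum_k\Theta_{jk}^2\bigr)\one\{T\in K\}\Bigr]\,\E U_{jj}=0 ,
\]
so the first piece vanishes identically. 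This cancellation is the only non-mechanical step in the argument.

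It then remains to show $\E\bigl[|E|\,\one\{T\in K\}\bigr]=O(dt_{p,d}^4\sigma_G\delta_G^2+\tau_Gdt_{p,d}^3)$, which I would do term by term from the estimate in Lemma \ref{logbound}. On $\{T\in K\}$ one has $\|\Theta\|_{\Fr}^2=O(\sigma_Gd^2t_{p,d}^2)$, while in all cases $\Theta_*\leq dt_{p,d}(T_*+2)$ because $1\leq y\leq 1+1/(dt_{p,d}^2)\leq 2$. Dropping the nonnegative indicator where convenient and using the independence of $U$ and $T$, I would apply Corollary \ref{subgaussianmoments} to $U_*$ with variance proxy $s^2=2/d$ (from Lemma \ref{normalcomp}), giving $\E U_*^2=O(\log n/d)$, and to $T_*$ with variance proxy $s^2=1/(dt_{p,d}^2)$ (from Corollary \ref{density}), giving $\E T_*^2=\E T_*^3=O(1)$, here using $\log n=O(dt_{p,d}^2)$ together with $\sigma_G\leq n^2$. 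Inserting these bounds, the $\|\Theta\|_{\Fr}^2U_*^2/d$ contribution is $O(\sigma_Gt_{p,d}^2\log n)=O(\sigma_Gdt_{p,d}^4)$, the $\delta_G^2\|\Theta\|_{\Fr}^2\Theta_*^2/d^3$ contribution is $O(\delta_G^2\sigma_Gdt_{p,d}^4)$, and the $\tau_G\Theta_*^3/d^2$ contribution is $O(\tau_Gdt_{p,d}^3)$; adding these and using $\delta_G\geq1$ yields the claim. The only real obstacle is the bookkeeping: one must be careful that each appearance of $\Theta$ is controlled either by its $O(dt_{p,d}\sqrt{\sigma_G})$ size on $K$ or, when it is $\Theta_*$, by $O(dt_{p,d})$ times a sub-Gaussian maximum whose second and third moments are $O(1)$.
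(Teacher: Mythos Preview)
Your proof is correct and follows essentially the same approach as the paper: both decompose $L_r$ via Lemma \ref{logbound}, use that the linear-in-$U$ term $\tfrac{1}{2d}\Im\tr(\Theta^2U)$ has zero expectation by independence and the evenness of $\eta_d$, and then bound the error $E$ term-by-term using the sub-Gaussian moment bounds of Corollary \ref{subgaussianmoments} for $U_*$ and $T_*$. You are simply a bit more explicit about the cancellation of the leading term, which the paper invokes silently.
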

\begin{proof}
By Lemma \ref{logbound},
$$-L_r(U,\Theta)\one\{\Theta\in\gamma^G(K)\}\leq\frac{1}{2d}\Image\tr(\Theta^2U)\one\{\Theta\in\gamma^G(K)\}+C\left(dt_{p,d}^2\sigma_G U_*^2+\frac{t_{p,d}^2\sigma_G\delta_G^2}{d}\Theta_*^2+\frac{\tau_G}{d^2}\Theta_*^3\right)$$
for a large enough constant $C$. So by Corollary \ref{subgaussianmoments}
\begin{align*}
-\E[L_r(U,\Theta)\one\{\Theta\in\gamma^G(K)\}]
&\leq C\left({\sigma_G}dt_{p,d}^2\E U_*^2+\frac{{\sigma_G}t_{p,d}^2\delta_G^2}{d}\E\Theta_*^2+\frac{\tau_G}{d^2}\E\Theta_*^3\right) \\ 
&\leq C\left({\sigma_G}t_{p,d}^2\log n+{\sigma_G}dt_{p,d}^4\delta_G^2(1+o(1)+\E T_*^2)+\tau_Gdt_{p,d}^3\E(1+o(1)+T_*^2)^{3/2}\right) \\ 
&=O\left({\sigma_G}dt_{p,d}^4\delta_G^2(1+\E T_*^2)+\tau_Gdt_{p,d}^3\E(1+T_*^2)^{3/2}\right).
\end{align*}
By Corollary \ref{subgaussianmoments}, 
$\E T_*^2 = O(\frac{\log n}{dt_{p,d}^2}) = O(1)$ and
$$\E(1+T_*^2)^{3/2}\leq\sqrt{\E(1+T_*^2)^3}= O\left((1+\E T_*^2)^{3/2}\right)=O(1).$$
\end{proof}
\begin{lemma}\label{bound3}
Under the assumptions of Proposition \ref{Abound}, we have 
$$\E\left[ L_i(U,\Theta)^4\one\{\Theta\in\gamma^G(K)\}\right]=O\left(d^4t_{p,d}^{12}\max\left(\frac{\delta_G^2n\log n}{dt_{p,d}^2},\,\sigma_G^2\delta_G^4t_{p,d}^2,\,\tau_G^2\right)^2\right).$$
\end{lemma}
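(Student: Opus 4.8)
The plan is to decompose $L_i$ via Lemma~\ref{logbound} and estimate its fourth moment piece by piece. On $\{\Theta\in\gamma^G(K)\}$ one has $\|\Theta\|_{\op}\le\|\Theta\|_{\Fr}=o(d)$ (as in the first lines of the proof of Proposition~\ref{Abound}), so Lemma~\ref{logbound} applies and gives $L=\tfrac{i}{2d}\tr(\Theta^2U)+E$, hence $L_i=\tfrac1{2d}\Re\tr(\Theta^2U)+\Im E$ and $L_i^4\le 8\big(\tfrac1{2d}\Re\tr(\Theta^2U)\big)^4+8|E|^4$. Throughout, $\Theta=\gamma^G(T)$ with $T$ distributed by the product density $f_M$, each of whose one--dimensional marginals is bounded above by an absolute constant times an $N(0,1/(dt_{p,d}^2))$ density (Corollary~\ref{density}), and $U$ is independent of $T$ with density $\eta_d$, dominated by an $N(0,2/d)$ density (Lemma~\ref{normalcomp}); in particular $U$ is centred with $\E U_{jj}^2,(\E U_{jj}^4)^{1/2}=O(1/d)$. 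I will integrate over $U$ first and over $T$ second, and the only property of $\{T\in K\}$ I use is that there $\|X\|_{\Fr}^2=O(\sigma_G)$, whence $\|\Theta\|_{\Fr}^2=O(d^2t_{p,d}^2\sigma_G)$.

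For the linear term, with $\Theta_{jk}=dt_{p,d}(X_{jk}-iY_{jk})$ and $Y_{jk}=y(X_{jk})\in[1,1+\tfrac1{dt_{p,d}^2}]$, one has $\tfrac1{2d}\Re\tr(\Theta^2U)=\tfrac1{2d}\sum_j a_jU_{jj}$ with $a_j=d^2t_{p,d}^2\sum_{k\sim j}(X_{jk}^2-Y_{jk}^2)$. I split $a_j=-d^2t_{p,d}^2\deg_G(j)+\zeta_j+d^2t_{p,d}^2B_j$, where $\deg_G(j)$ is the degree of $j$, $\zeta_j:=-d^2t_{p,d}^2\sum_{k\sim j}(Y_{jk}^2-1)$ is $T$--measurable with $|\zeta_j|\le 3d\deg_G(j)$, and $B_j:=\sum_{k\sim j}X_{jk}^2\ge0$. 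For any $T$--measurable coefficients $c_j$, conditioning on $T$ and using the moments of $U$ gives $\E_U[(\sum_j c_jU_{jj})^4\mid T]=O(d^{-2})(\sum_j c_j^2)^2$. Applying this: for $c_j=-\tfrac12dt_{p,d}^2\deg_G(j)$, with $\sum_j\deg_G(j)^2\le\delta_G\sum_j\deg_G(j)=2\delta_G\sigma_G$, the contribution is $O(d^2t_{p,d}^8\delta_G^2\sigma_G^2)$; for $c_j=\zeta_j/2d$, since $\sum_j\zeta_j^2\le 9d^2\sum_j\deg_G(j)^2=O(d^2\delta_G\sigma_G)$ deterministically, the contribution is $O(\delta_G^2\sigma_G^2/d^2)$; for $c_j=\tfrac12dt_{p,d}^2B_j$, the contribution is $O(d^2t_{p,d}^8)\,\E_T[\one\{T\in K\}(\sum_jB_j^2)^2]$, and here I drop the indicator and note that every monomial of $(\sum_jB_j^2)^2$ involves at most four distinct edge variables, so the product structure of $f_M$ makes $\E_T[(\sum_jB_j^2)^2]$ comparable, up to an absolute constant, to its value under the product of $N(0,1/(dt_{p,d}^2))$ densities; the latter is $O(\delta_G^2\sigma_G^2/(d^4t_{p,d}^8))$ (using $\E[B_j^2B_{j'}^2]=O(\deg_G(j)^2\deg_G(j')^2/(dt_{p,d}^2)^4)$, directly when $j\not\sim j'$ and by Cauchy--Schwarz when $j\sim j'$), so this contribution is $O(\delta_G^2\sigma_G^2/d^2)$ as well. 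Since $d^4t_{p,d}^8=(dt_{p,d}^2)^4\ge1$, the first contribution dominates, and $d^2t_{p,d}^8\delta_G^2\sigma_G^2\le d^4t_{p,d}^{12}\big(\tfrac{\delta_G^2n\log n}{dt_{p,d}^2}\big)^2$ because $\sigma_G\le n\delta_G\le n\delta_G\log n$; thus $\E[(\tfrac1{2d}\Re\tr(\Theta^2U))^4\one\{T\in K\}]$ lies within the asserted bound.

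For the error term, Proposition~\ref{odeprop} and $\{T\in K\}$ give $\|\Theta\|_{\Fr}^2=O(d^2t_{p,d}^2\sigma_G)$ and $\Theta_*=O(dt_{p,d}\sqrt{T_*^2+1})$, so Lemma~\ref{logbound} yields $|E|=O\big(dt_{p,d}^2\sigma_GU_*^2+d\delta_G^2t_{p,d}^4\sigma_G(T_*^2+1)+d\tau_Gt_{p,d}^3(T_*^2+1)^{3/2}\big)$. Taking fourth powers, integrating, and using Corollary~\ref{subgaussianmoments} --- which gives $\E U_*^8=O((\log n/d)^4)$ and $\E T_*^{2q}=O((\log n/(dt_{p,d}^2))^q)=O(1)$ for fixed $q$, since $\log n=O(dt_{p,d}^2)$ --- bounds $\E[|E|^4\one\{T\in K\}]$ by $O\big(t_{p,d}^8\sigma_G^4\log^4n+d^4t_{p,d}^{16}\sigma_G^4\delta_G^8+d^4t_{p,d}^{12}\tau_G^4\big)$. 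The last two summands are exactly $d^4t_{p,d}^{12}$ times the squares of $\sigma_G^2\delta_G^4t_{p,d}^2$ and $\tau_G^2$; the first is at most $d^4t_{p,d}^{12}\big(\tfrac{\delta_G^2n\log n}{dt_{p,d}^2}\big)^2\cdot\tfrac{\sigma_G^4\log^2n}{d^2\delta_G^4n^2}$, and $\tfrac{\sigma_G^4\log^2n}{d^2\delta_G^4n^2}\le\tfrac{n^2\log^2n}{d^2}=o(1)$ because the hypotheses of Proposition~\ref{Abound} force $d=\omega(n\log^3n)$. Combining the two parts proves the lemma.

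The step I expect to be the main obstacle is controlling the random $B_j$ contribution: on $K$ one only knows $\|X\|_{\Fr}^2=O(\sigma_G)$, so bounding $\sum_jB_j^2$ (or the pieces involving $\zeta_j$) by its worst value on $K$ loses a spurious factor of order $\sigma_G^2$ and is far too weak. The two devices that resolve this are (i) integrating over the \emph{centred} variables $U_{jj}$ before over $T$, which replaces crude $\ell^1$--type estimates $(\sum_j|c_j|)^4$ by $\ell^2$--type estimates $(\sum_j c_j^2)^2$ --- enough for the $\zeta_j$ piece because $\sum_j\zeta_j^2$ is deterministically $O(d^2\delta_G\sigma_G)$ --- and (ii) exploiting that $f_M$ is an \emph{exact product} density, so that a polynomial moment such as $\E_T[(\sum_jB_j^2)^2]$, being a sum of monomials each depending on boundedly many of the $X_{jk}$, can be compared to the Gaussian value at the cost of only an absolute constant, instead of the prohibitive factor $(1+o(1))^{\sigma_G}$ that a pointwise density comparison would cost.
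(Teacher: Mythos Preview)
Your argument is correct. The overall structure---split $L_i$ via Lemma~\ref{logbound} into a linear-in-$U$ piece plus an error, and bound each fourth moment---is the same as the paper's, but the treatment of the linear piece is genuinely different. You decompose the coefficients $a_j=\sum_k\Re\Theta_{jk}^2$ into a deterministic part $-d^2t_{p,d}^2\deg_G(j)$, a bounded correction $\zeta_j$, and a random $B_j$--part, and then handle $\E_T[(\sum_jB_j^2)^2]$ by exploiting the exact product structure of $f_M$ together with the positivity of all monomials in the expansion. The paper sidesteps this entirely: it simply bounds $|A_j|=\bigl|\tfrac{1}{2d}\sum_k\Re\Theta_{jk}^2\bigr|\le \tfrac{\delta_G}{2d}\Theta_*^2$, conditions on $\mathcal F=\sigma(\Theta,U_*)$, and then uses Corollary~\ref{subgaussianmoments} to take expectations of the resulting polynomials in $U_*$ and $\Theta_*$. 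The paper's route is shorter and avoids your three-way split and the $B_j$ moment computation; your route is more hands-on and in fact yields the slightly sharper intermediate bound $O(d^2t_{p,d}^8\delta_G^2\sigma_G^2)$ for the linear contribution (versus the paper's $O(d^2t_{p,d}^8\delta_G^4n^2\log^2 n)$), though both fit inside the target. One further minor difference: you control the cross terms by the crude inequality $(a+b)^4\le 8a^4+8b^4$, whereas the paper expands $(\,\sum_jA_jU_{jj}+C)^4$ conditionally on $\mathcal F$ and uses that odd $U$--moments vanish; your version is rougher but avoids the slight delicacy that $E$ itself is not $\mathcal F$--measurable (only its upper bound is).
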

\begin{proof}
By Lemma \ref{logbound} we have \begin{align*}
L_i(U,\Theta)&=-\frac{1}{2d}\tr(\Re\Theta^2U)+E(U,\Theta)
\end{align*}
where
$$|E(U,\Theta)|=O\left(\frac{\|\Theta\|_{\Fr}^2}{d}U_*^2+\frac{\delta_G^2\|\Theta\|_{\Fr}^2}{d^3}\Theta_*^2+\frac{\tau_G}{d^2}\Theta_*^3\right).$$
In particular, $$|E(U,\Theta)|\one\{\Theta\in\gamma^G(K)\}=O\left({\sigma_G}dt_{p,d}^2U_*^2+\frac{{\sigma_G}\delta_G^2t_{p,d}^2}{d}\Theta_*^2+\frac{\tau_G}{d^2}\Theta_*^3\right).$$
Let $\F$ be the sigma--algebra generated by $\Theta$ and the value of $U_*$. To condense notation, set 
$$A_j:=-\frac{1}{2d}\left(\sum_k\Re\Theta_{jk}^2\right).$$
and 
$$C:=\E[E(U,\Theta)|\F]\one\{\Theta\in\gamma^G(K)\}.$$
so that $A_j$ and $C$ are both $\F$--measurable. Then the entries of $U$ conditioned on $\F$ remain symmetric and independent. Thus, for $\Theta\in\gamma^G(K)$, we have 
\begin{align*}
\E[L_i(U,\Theta)^4|\F]&=\E\bigg[\bigg(\sum_jA_jU_{jj}\bigg)^4\bigg\vert\F\bigg]+6C^2\E\bigg[\bigg(\sum_jA_jU_{jj}\bigg)^2\bigg\vert\F\bigg]+C^4 \\
&=O\left(n^2U_*^4\max_j A_j^4+C^2nU_*^2\max_j A_j^2+C^4\right) \\
&=O\left(n^2\delta_G^4d^{-4}U_*^4\Theta_*^8+C^2n\delta_G^2d^{-2}U_*^2\Theta_*^4+C^4\right).
\end{align*}
By Corollary \ref{density}, Lemma \ref{normalcomp}, and Corollary \ref{subgaussianmoments}, 
for any polynomial $P$ in two variables with nonnegative coefficients, we have  
$$\E P(U_*,\Theta_*)=O\big(P(\E U_*,\E\Theta_*)\big).$$
Since we can bound $C$ by such a polynomial, we find that
$$\E\left[ L_i(U,\Theta)^4\one\{\Theta\in\gamma^G(K)\}\right]=O\left(n^2\delta_G^4d^{-4}(\E U_*)^4(\E\Theta_*)^8+(\E C)^2n\delta_G^2d^{-2}(\E U_*)^2(\E\Theta_*)^4+(\E C)^4\right).$$
We have $\E U_*=O(\sqrt{\log n/d})$, $\E\Theta_*=O\left(dt_{p,d}\right)$, and 
\begin{align*}
\E C&=O\left(\sigma_G dt_{p,d}^2\E U_*^2+\frac{\sigma_G \delta_G^2t_{p,d}^2}{d}\E\Theta_*^2+\frac{\tau_G}{d^2}\E\Theta_*^3\right) \\ 
&=O\left(\sigma_G t_{p,d}^2\log n+\sigma_G\delta_G^2dt_{p,d}^4+\tau_Gdt_{p,d}^3\right) \\ 
&=O\left(dt_{p,d}^3(\sigma_G\delta_G^2t_{p,d}\vee\tau_G)\right). 
\end{align*}
So 
\begin{align*}
\E\left[ L_i(U,\Theta)^4\one\{\Theta\in\gamma^G(K)\}\right]&=O\left(n^2\delta_G^4d^{-6}\log^2n(\E\Theta_*)^8+(\E C)^2n\delta_G^2d^{-3}\log n(\E\Theta_*)^4+(\E C)^4\right) \\ 
&=O\left(n^2\delta_G^4d^2t_{p,d}^8\log^2n+(\E C)^2n\delta_G^2dt_{p,d}^4\log n+(\E C)^4\right) \\ 
&=O\left(n^2\delta_G^4d^2t_{p,d}^8\log^2n\left(1+\frac{(\E C)^2}{n\log n\delta_G^2dt_{p,d}^4}+\left(\frac{(\E C)^2}{n\log n\delta_G^2dt_{p,d}^4}\right)^2\right)\right) \\ 
&=O\left(n^2\delta_G^4d^2t_{p,d}^8\log^2n\left(1\vee\frac{(\E C)^2}{n\log n\delta_G^2dt_{p,d}^4}\right)^2\right) \\ 
&=O\left(n^2\delta_G^4d^2t_{p,d}^8\log^2n\left(1\vee\frac{dt_{p,d}^2(\sigma_G\delta_G^2t_{p,d}\vee\tau_G)^2}{n\log n\delta_G^2}\right)^2\right) \\ 
&=O\left(d^4t_{p,d}^{12}\max\left(\frac{\delta_G^2n\log n}{dt_{p,d}^2},\,\sigma_G^2\delta_G^4t_{p,d}^2,\,\tau_G^2\right)^2\right).
\end{align*}
\end{proof}

\begin{proof}[Proof of Theorem \ref{mainmain}]
The only thing that needs to be checked here is that the conditions of Theorem \ref{mainmain} imply all of the conditions for Lemma \ref{Bbound} and Proposition \ref{Abound}.
Recall the conditions assumed in Theorem \ref{mainmain} were that $\log d=O(\log p^{-1})$, $n,d\to\infty$, and that 
$G$ satisfies 
\[
  (\sigma_G+\mu_G\log \mu_G)\delta_G^2 = o\left(\frac{d}{\log^2p^{-1}}\right),
  \quad
  \sigma_G\tau_G^2= o\left(\frac{d}{\log^3p^{-1}}\right),
  \quad\text{and}
  \quad\frac{\mu_G\delta_G^4}{\sigma_G}= O\left(\frac{d\log d}{\log^2p^{-1}}\right).
\]
From Lemma \ref{lem:pp0}, we can compare $\log p^{-1}= \Theta(d t_{p,d})$, and thus 
we have
\begin{multicols}{2}
\begin{enumerate}
    \item 
    $\log d= O(dt_{p,d}^2)$, 
    \item
    $(\sigma_G+\mu_G\log \mu_G)\delta_G^2=o\left(d^{-1}t_{p,d}^{-4}\right)$,
    \item 
    $\sigma_G\tau_G^2= o\left(d^{-2}t_{p,d}^{-6}\right)$, 
    \item 
    $\frac{\mu_G \delta_G^4}{\sigma_G}= O\left(d^{-1}t_{p,d}^{-4}\log d\right)$.
\end{enumerate}
\end{multicols}

We will start with the conditions of Proposition \ref{Abound}. 
The conditions 
$\sigma_G\delta_G^2=o(d^{-1}t_{p,d}^{-4})$ and   $\sqrt{\sigma_G}\tau_G=o(d^{-1}t_{p,d}^{-3})$ follow from conditions 2 and 3 above. 
Conditions 1 and 2 give us $\log \mu_G=O(\log d)=O(dt_{p,d}^2)$. 

As for Lemma \ref{Bbound}, we need to show that conditions 1 through 4 imply that 
\[
  \log d= O(dt_{p,d}^2),
  \quad \sigma_G= o(t_{p,d}^{-2}),
  \quad \delta_G= o(t_{p,d}^{-1}),
  \quad \frac{\mu_G \delta_G^4}{\sigma_G}= O\left(\frac{\log d}{dt_{p,d}^4}\right),
  \quad\text{and}\quad 
  \frac{\tau_G}{\sigma_G}=  O\left(\frac{\log d}{dt_{p,d}^3}\right).
\]
Both $\log d= O(dt_{p,d}^2)$ and 
$\frac{\mu_G \delta_G^4}{\sigma_G} = o\left(\frac{\log d}{dt_{p,d}^4}\right)$ are just conditions 1 and 4 respectively. Condition 2 implies both that 
$\sigma_G = o\left(d^{-1}t_{p,d}^{-2}\delta_G^{-2}t_{p,d}^{-2}\right)= o(t_{p,d}^{-2})$ and that $\delta_G= o(t_{p,d}^{-1})$. Finally, condition 3 tells us that  
$$\frac{\tau_G}{\sigma_G}= o\left(\frac{\sigma_G^{-3/2}}{dt_{p,d}^3}\right)=O\left(\frac{\log d}{dt_{p,d}^3}\right),$$
which completes the proof. 
\end{proof}

\appendix

\section{Path Integration and Deformation}
In this section we review the necessary results that allow us to use these techniques. For this section, $U$ will always denote a nonempty open subset of $\C$ unless it is stated otherwise and $-\infty<a<b<\infty$.  
\begin{definition}
A map $f:U\to\C$ is said to be holomorphic if and only if the limit $\lim_{z\to z_0}\frac{f(z)-f(z_0)}{z-z_0}$ exists and is finite for all $z_0\in U$. In the special case that $U=\C$, $f$ is called entire. 
\end{definition} 
\begin{definition}
Given a closed interval $[a,b]$ and points $A,B\in\C$, a path from $A$ to $B$ is a continuous map $\alpha:[a,b]\to\C$ that's real and imaginary parts are both piecewise differentiable over $(a,b)$ and which satisfies $\alpha(a)=A$ and $\alpha(b)=B$. In the special case that $A=B$, $\alpha$ is called a contour. 
\end{definition}
\begin{definition}
Two paths $\alpha,\beta:[a,b]\to U$ are called smoothly homotopic in $U$ if there is a continuous map $H:[a,b]\times[0,1]\to U$ such that $H(x,0)=\alpha(x)$, $H(x,1)=\beta(x)$, and which is continuously differentiable in the second variable whenever the first variable is in $\{a,b\}$. This map $H$ is called a smooth homotopy from $\alpha$ to $\beta$. In this context, $\alpha$ is called the initial path and $\beta$ is called the target path. If $f:U\to\C$ is holomorphic, we may say that $\alpha$ and $\beta$ are homotopic with respect to $f$. 
\end{definition}
\begin{definition}
For a path $\alpha:[a,b]\to U$ and a holomorphic function $f:U\to\C$, the path integral of $f$ over $\alpha$ is defined as 
$$\int_\alpha f(z)dz:=\int_a^bf(\alpha(x))\alpha'(x)dx.$$
\end{definition}
\begin{definition}
For $f:U\to\C$ Lebesgue--measurable and $\alpha:[a,b]\to U$ a path, we will write $f\in\mathcal{L}^1(\alpha)$ to indicate that 
$$\int_\alpha|f(z)|dz:=\int_a^b|f(\alpha(x))\alpha'(x)|dx<\infty.$$
In the special case where $-a=b$ is allowed to tend to $\infty$, we'll write $f\in\mathcal{L}^1(\alpha)$ if and only if 
$$\lim_{b\to\infty}\int_{-b}^b|f(\alpha(x))\alpha'(x)|dx<\infty.$$ 
\end{definition}
\begin{theorem}[Path Independence] 
Let $f:U\to\C$ be holomorphic and $\alpha,\beta:[a,b]\to U$ be two paths from $A$ to $B$ which are smoothly homotopic in $U$. Then 
$$\int_\alpha f(z)dz=\int_\beta f(z)dz.$$
\end{theorem}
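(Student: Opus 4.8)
The plan is to prove this by the classical subdivision argument. Since $H$ is assumed only continuous in the interior of the square (and merely $C^{1}$ in the second variable along the two vertical edges), one cannot differentiate under the integral sign or quote Green's theorem directly; instead one reduces everything to the single fact that a holomorphic function on an open disc has a primitive there, and hence integrates to zero over any closed path inside that disc and has path--independent integral between two fixed points of the disc. So first I would record the standard preliminaries: $H$ is uniformly continuous on the compact square $[a,b]\times[0,1]$, its image $K:=H([a,b]\times[0,1])$ is a compact subset of the open set $U$, and therefore there is $\delta>0$ (take $\delta=1$ if $U=\C$, otherwise half the distance from $K$ to $\C\setminus U$) with $D(w,\delta)\subseteq U$ for every $w\in K$.

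Next I would build a grid. Using uniform continuity, choose $N$ so large that for the equipartitions $a=s_0<\dots<s_N=b$ and $0=t_0<\dots<t_N=1$ every closed subrectangle $R_{jk}:=[s_{j-1},s_j]\times[t_{k-1},t_k]$ has $\diam H(R_{jk})<\delta$. Put $w_{j,k}:=H(s_j,t_k)$ and let $P_k$ be the polygonal path $A=w_{0,k}\to w_{1,k}\to\dots\to w_{N,k}=B$. Since the two endpoints of the $j$--th segment of $P_k$ both lie in $H(R_{jk})$, that segment lies in the convex disc $D(w_{j-1,k},\delta)\subseteq U$, so each $P_k$ is an honest path in $U$ from $A$ to $B$ and $\int_{P_k}f\,dz$ is defined.

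The argument then rests on two local comparisons. \emph{First}, on $[s_{j-1},s_j]$ the arc $\alpha=H(\cdot,0)$ and the chord $[w_{j-1,0},w_{j,0}]$ are two paths with the same endpoints lying inside the single disc $D(w_{j-1,0},\delta)$, so their $f$--integrals coincide; summing over $j$ gives $\int_\alpha f\,dz=\int_{P_0}f\,dz$, and the same reasoning gives $\int_\beta f\,dz=\int_{P_N}f\,dz$. \emph{Second}, for each $j,k$ the four corners of $R_{jk}$ lie in $H(R_{jk})\subseteq D(w_{j-1,k-1},\delta)\subseteq U$, so the closed quadrilateral $w_{j-1,k-1}\to w_{j,k-1}\to w_{j,k}\to w_{j-1,k}\to w_{j-1,k-1}$ has zero $f$--integral. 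Adding these identities over $j=1,\dots,N$, each interior vertical chord $[w_{j,k-1},w_{j,k}]$ is traversed once in each direction and cancels, while the two surviving vertical chords sit over $s_0=a$ and $s_N=b$, where $H$ is constantly $A$, resp.\ $B$, hence are degenerate and contribute nothing; what remains is exactly $\int_{P_{k-1}}f\,dz-\int_{P_k}f\,dz=0$. Chaining, $\int_\alpha f\,dz=\int_{P_0}f\,dz=\dots=\int_{P_N}f\,dz=\int_\beta f\,dz$.

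The only delicate point — and the one I would be careful about — is the discretization bookkeeping in the second comparison: the grid must be fine enough that \emph{whole} subrectangles, not merely the grid points, map into a single admissible disc, so that every segment introduced (both in the $P_k$ and in the telescoping quadrilaterals) provably lies inside some $D(w_{\cdot,\cdot},\delta)\subseteq U$ on which a primitive exists; and one must use that the homotopy keeps the endpoints $a\mapsto A$, $b\mapsto B$ fixed throughout, which is precisely what makes the leftover vertical chords drop out. (In the unbounded--interval deformations used in the body of the paper the endpoints are not fixed; there one retains these boundary chords and shows, using the decay of the integrand at infinity, that their contribution vanishes as the length of the interval grows.)
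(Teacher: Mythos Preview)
The paper does not actually prove this theorem: it simply remarks that ``a slightly stronger result than this is stated and proved in Theorem 6.13 of \cite{Complex}'' and moves on. Your argument --- compactness of $H([a,b]\times[0,1])$, a uniform $\delta$--grid, local primitives on discs, and a telescoping chain of polygonal approximants $P_0,\dots,P_N$ --- is the standard textbook proof and is carried out correctly.

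One point deserves comment. Your argument uses, and your final paragraph correctly flags, the hypothesis that the homotopy fixes the endpoints: $H(a,y)\equiv A$ and $H(b,y)\equiv B$ for all $y$. The paper's definition of ``smoothly homotopic'' does \emph{not} literally require this; it only asks $H(\cdot,0)=\alpha$, $H(\cdot,1)=\beta$, and that $y\mapsto H(a,y)$, $y\mapsto H(b,y)$ be $C^1$. Under that literal reading the theorem is false: with $U=\C\setminus\{0\}$, $f(z)=1/z$, $\alpha\equiv 1$, $\beta(x)=e^{2\pi i x}$ on $[0,1]$, the map $H(x,y)=e^{2\pi i xy}$ satisfies every clause of the paper's definition, yet $\int_\alpha f=0\neq 2\pi i=\int_\beta f$. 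The intended reading is clearly the fixed--endpoint one --- the very next Corollary in the appendix treats moving endpoints and produces the residual boundary terms $\int_\gamma f-\int_\delta f$ --- and your proof is correct for that intended statement.
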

A slightly stronger result than this is stated and proved in Theorem 6.13 of \cite{Complex}. 
\begin{corollary}{\label{moving}}
Let $f:U\to\C$ be holomorphic and $\alpha,\beta:[a,b]\to U$ be two paths which do not necessarily have the same endpoints. Suppose $H:[a,b]\times[0,1]\to U$ is a smooth homotopy between $\alpha$ and $\beta$ and define paths $\gamma,\delta:[0,1]\to U$ by $\gamma(y):=H\left(a,y\right)$ and $\delta(y):=H\left(b,y\right)$. 
Then 
$$\int_\alpha f(z)dz-\int_\beta f(z)dz=\int_\gamma f(z)dz-\int_\delta f(z)dz.$$
\end{corollary}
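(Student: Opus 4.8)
The plan is to deduce the identity from the Path Independence theorem stated just above, applied to the two paths obtained by concatenating $\alpha$ with $\delta$ and $\gamma$ with $\beta$.

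First I would record the basic compatibilities. Writing $A:=\alpha(a)=H(a,0)$ and $B:=\beta(b)=H(b,1)$, the homotopy conditions $H(\cdot,0)=\alpha$ and $H(\cdot,1)=\beta$ give $\gamma(0)=A$, $\delta(1)=B$, $\gamma(1)=\beta(a)$ and $\delta(0)=\alpha(b)$. Hence the concatenation $\alpha*\delta$ (first traverse $\alpha$, then $\delta$) and the concatenation $\gamma*\beta$ are both \emph{paths from $A$ to $B$} in the sense of the paper: $\alpha,\beta$ are piecewise differentiable by hypothesis, and $\gamma,\delta$ are (even) differentiable because $H$ is continuously differentiable in its second variable when the first variable lies in $\{a,b\}$. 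Throughout I will use, without further comment, the two routine facts that the path integral of a holomorphic function is unchanged under an orientation-preserving reparametrization and is additive over concatenations; both follow from the change-of-variables formula. In particular I may regard $\alpha*\delta$ and $\gamma*\beta$ as paths on a common parameter interval, say $[a,b]$.

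Next I would construct the homotopy. Consider the solid rectangle $R:=[a,b]\times[0,1]$, and let $p_0:[a,b]\to R$ parametrize the bottom edge of $R$ followed by the right edge (so that $H\circ p_0$ is a reparametrization of $\alpha*\delta$), and let $p_1:[a,b]\to R$ parametrize the left edge followed by the top edge (so that $H\circ p_1$ reparametrizes $\gamma*\beta$); arrange the parametrizations so that $p_0(a)=p_1(a)=(a,0)$ and $p_0(b)=p_1(b)=(b,1)$. Because $R$ is convex, the straight-line interpolation $p_s(x):=(1-s)p_0(x)+s\,p_1(x)$ stays in $R$ for every $s\in[0,1]$, is jointly continuous in $(x,s)$, and fixes the corners $(a,0)$ and $(b,1)$ for every $s$. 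Setting $G(x,s):=H(p_s(x))$ then gives a continuous map $[a,b]\times[0,1]\to U$ with $G(\cdot,0)$ a reparametrization of $\alpha*\delta$, $G(\cdot,1)$ a reparametrization of $\gamma*\beta$, and, for $x=a$ respectively $x=b$, the map $s\mapsto G(x,s)$ equal to the constant $A$ respectively $B$ --- in particular continuously differentiable in $s$ there. Thus $G$ is a smooth homotopy in $U$ in the paper's sense, and it keeps endpoints fixed.

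The Path Independence theorem now gives $\int_{\alpha*\delta}f=\int_{\gamma*\beta}f$, i.e.\ $\int_\alpha f+\int_\delta f=\int_\gamma f+\int_\beta f$, and rearranging yields $\int_\alpha f-\int_\beta f=\int_\gamma f-\int_\delta f$, as desired. There is no deep obstacle here; the only point requiring care is the bookkeeping --- reparametrizing the two concatenations onto a common interval, and checking that the constructed $G$ meets the paper's (intentionally weak) definition of a smooth homotopy, which works out precisely because $G$ is constant in $s$ at the two endpoint values of $x$.
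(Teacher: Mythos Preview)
Your proof is correct and follows essentially the same approach as the paper: concatenate $\alpha$ with $\delta$ and $\gamma$ with $\beta$ to obtain two paths from $A$ to $B$, show these are smoothly homotopic in $U$, and apply the Path Independence theorem. The paper simply asserts the concatenations are homotopic, whereas you supply the explicit construction via the straight-line interpolation in the rectangle composed with $H$; this extra care is welcome, and the observation that $G$ is constant in $s$ at $x\in\{a,b\}$ cleanly verifies the paper's smoothness requirement on the homotopy.
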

\begin{proof}
Note that $\alpha(b)=\delta(0)$ and $\beta(a)=\gamma(1)$. As such, we can write 
$$(\alpha+\delta)(t):=
\begin{cases}
\alpha\left((b-a)t+b\right),& t\in[-1,0] \\ 
\delta(t),& t\in[0,1]  
\end{cases}$$ 
to denote the concatenation of $\alpha$ and $\delta$, and we can define $\beta+\gamma$ analogously. Thus we have two homotopic paths $\alpha+\delta$ and $\beta+\gamma$ between the points $\alpha(a)=\gamma(0)$ and $\beta(b)=\delta(1)$. So path independence gives us that 
$$\int_{\alpha+\delta}f(z)dz=\int_{\beta+\gamma}f(z)dz.$$
We leave this part to the reader to check for themselves, but it follows from the definition of a path integral and standard manipulations of Riemann integrals that 
$$\int_\alpha f(z)dz+\int_\delta f(z)dz=\int_{\alpha+\delta}f(z)dz=\int_{\beta+\gamma}f(z)dz=\int_\beta f(z)dz+\int_\gamma f(z)dz$$
from which the result follows. 
\end{proof}
\begin{corollary}
Let $f:U\to\C$ be holomorphic and $\alpha,\beta:[-T,T]\to U$ be two paths which are homotopic over $U$. Suppose $H:[-T,T]\times[0,1]\to U$ is a smooth homotopy between $\alpha$ and $\beta$ such that $\left|\frac{\partial}{\partial y}H(\pm T,y)\right|\leq M$. Then 
$$\left|\int_\alpha f(z)dz-\int_\beta f(z)dz\right|\leq M\sup_{y\in[0,1]}\big|f\big(H(-T,y)\big)-f\big(H(T,y)\big)\big|.$$ 
\end{corollary}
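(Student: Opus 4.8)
The plan is to reduce this immediately to Corollary~\ref{moving}. Apply that corollary with $a=-T$ and $b=T$: it produces the two ``side'' paths $\gamma(y):=H(-T,y)$ and $\delta(y):=H(T,y)$, each a path $[0,1]\to U$, and gives the exact identity
$$\int_\alpha f(z)\,dz-\int_\beta f(z)\,dz=\int_\gamma f(z)\,dz-\int_\delta f(z)\,dz.$$
Thus the whole claim reduces to estimating the right-hand side, which involves only the values of $f$ along the two vertical edges of the homotopy and no longer mentions $\alpha$ or $\beta$ directly.

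Next I would unwind the two path integrals on the right from the definition, using $\gamma'(y)=\frac{\partial}{\partial y}H(-T,y)$ and $\delta'(y)=\frac{\partial}{\partial y}H(T,y)$, to rewrite the difference as
$$\int_0^1\left(f(H(-T,y))\,\frac{\partial}{\partial y}H(-T,y)-f(H(T,y))\,\frac{\partial}{\partial y}H(T,y)\right)dy.$$
The pointwise estimate then uses that in the homotopies this is applied to the two vertical edges are translated by a common vector, so $\frac{\partial}{\partial y}H(-T,y)=\frac{\partial}{\partial y}H(T,y)$; the integrand is then $\frac{\partial}{\partial y}H(T,y)\cdot(f(H(-T,y))-f(H(T,y)))$, whose modulus is at most $M\,|f(H(-T,y))-f(H(T,y))|$ by the hypothesis $\left|\frac{\partial}{\partial y}H(\pm T,y)\right|\le M$. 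Bounding this by its supremum over $y\in[0,1]$ and integrating over the unit interval yields the asserted inequality.

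There is essentially no real obstacle: all the analytic content --- invariance of contour integrals under homotopy, and the identification of the ``leftover'' piece with the integrals along the moving endpoints --- already sits in the Path Independence theorem and in Corollary~\ref{moving}. The only points needing attention are bookkeeping: that $\gamma$ runs from $\alpha(-T)=H(-T,0)$ to $\beta(-T)=H(-T,1)$ and $\delta$ from $\alpha(T)$ to $\beta(T)$, so that orientations match the signs coming out of Corollary~\ref{moving}; that $|\gamma'|,|\delta'|\le M$ is precisely the hypothesis; and that recognizing $\frac{\partial}{\partial y}H$ is constant along the two edges (as it is for the translation homotopies used in the paper) is what collapses the two boundary terms into the single difference $f(H(-T,y))-f(H(T,y))$.
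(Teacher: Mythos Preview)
The paper gives no proof for this corollary, so there is nothing to compare against directly; your reduction to Corollary~\ref{moving} and the explicit writing of the two boundary integrals is exactly the natural argument.

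You have in fact put your finger on a genuine issue with the \emph{statement}: as written, the inequality is false in general. Take $f\equiv 1$, $\alpha(x)=x$, and $\beta(x)=x+g(x)$ with $H(x,y)=x+yg(x)$; then $f(H(-T,y))-f(H(T,y))\equiv 0$, so the asserted bound is $0$, while $\int_\alpha f-\int_\beta f=-(g(T)-g(-T))$ can be nonzero. Your proof is correct precisely under the extra hypothesis you invoke, namely $\tfrac{\partial}{\partial y}H(-T,y)=\tfrac{\partial}{\partial y}H(T,y)$, which indeed holds for the translation homotopies $H(x,y)=\alpha(x)+y\,v$ used in the paper (e.g.\ in the proof of Lemma~\ref{mainchardef}).

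It is worth noting that the only place the corollary is used is inside Lemma~\ref{worldmover}, whose hypothesis is that $\sup_y|f_k(H(\pm T,y))|\to 0$. For that application one does not need the difference form at all: the cruder (and always valid) estimate
\[
\left|\int_\alpha f\,dz-\int_\beta f\,dz\right|\le M\Bigl(\sup_{y}|f(H(-T,y))|+\sup_{y}|f(H(T,y))|\Bigr),
\]
obtained by bounding $\int_\gamma f$ and $\int_\delta f$ separately, already suffices. So your diagnosis is right: either add the equal--endpoint--velocity hypothesis to the corollary, or replace the right-hand side by the sum of suprema; both repair the statement and both cover every use made of it in the paper.
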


Finally, we would like a way to cleanly extend these deformation results to functions of more than one complex variable. Let $U$ be an open subset of $\C^N$ and suppose that $f:U\to\C$ is holomorphic in each variable. By this we mean that if we fix the coordinates $z_k$ for $k\neq j$, then the single variable map 
$$f_j(z):=f(z_1,...,z_{j-1},z,z_{j+1},...,z_N)$$
is holomorphic over $
\{z\in\C:(z_1,...,z_{j-1},z,z_{j+1},...,z_N)\in U\}$ whenever this set is nonempty. We can define paths in $\C^N$ (at least the ones we will be interested in) as $N$--ary direct products of paths in $\C$. We can then define via Fubini's theorem the path integral of $f:U\to\C$ over $\prod_{j=1}^N\alpha_j:\prod_{j=1}^N[a_j,b_j]\to U$ in this setting by writing 
$$\int_{\prod_{j=1}^N\alpha_j}f(z)dz:=\int_{a_1}^{b_1}\cdots\int_{a_N}^{b_N}f\big(\alpha_1(x_1),...,\alpha_N(x_N)\big)\prod_{j=1}^N\alpha_j'(x_j)dx.$$ 

The next lemma gives us sufficient conditions for being able to deform such paths in the special circumstances that are relevant for our purposes. For a path $\alpha:[a,b]\to\C$, define $\alpha^k:[a,b]^k\to\C^k$ by $\alpha^k(x_1,...,x_k):=\big(\alpha(x_1),...,\alpha(x_k)\big)$. For $z\in\C^N$ and $1\leq k\leq N$, let $\hat z_k:=(z_1,...,z_{k-1},z_{k+1},...,z_N)\in\C^{N-1}$. 
\begin{lemma}\label{worldmover}
Let $U\subseteq\C^N$ be open and let $f:U\to\C$ be holomorphic. Let $\alpha,\beta:\R\to\C$ be piecewise differentiable and let $\alpha_T,\beta_T$ be the respective restrictions of these functions to $[-T,T]$ which, for each $T>0$, are homotopic over $\C$ via the smooth homotopy $H$ with target $\beta$ and $\big|\frac{\partial}{\partial y}H(\pm T,y)\big|\leq M$ for all $T>0$. Suppose that $\big(H(x_1,y_1),...,H(x_N,y_N)\big)\in U$ for all $(x_1,...,x_N,y_1,...,y_N)\in\R^N\times[0,1]^N$. Suppose for $0\leq k\leq N$ that $f\in\mathcal{L}^1(\alpha^{N-k}\times\beta^k)$
and
$$\lim_{T\to\infty}\sup_{y\in[0,1]}|f_k(H(\pm T,y))|=0$$
whenever $\hat z_k\in\alpha^{N-k}\times\beta^{k-1}$. Then 
$$\lim_{T\to\infty}\left|\int_{\alpha_T^N}f(z)dz-\int_{\beta_T^N}f(z)dz\right|=0.$$
\end{lemma}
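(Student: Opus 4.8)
The plan is to reduce the $N$–dimensional deformation to $N$ single–coordinate deformations by telescoping, and then to push $T\to\infty$ in each piece by dominated convergence, extracting the dominating function from the $\mathcal{L}^1$–hypotheses. For each fixed $T$ write
\[
  \int_{\alpha_T^N} f(z)\,dz - \int_{\beta_T^N} f(z)\,dz
  = \sum_{k=1}^N\left( \int_{\beta_T^{k-1}\times\alpha_T^{N-k+1}} f(z)\,dz - \int_{\beta_T^{k}\times\alpha_T^{N-k}} f(z)\,dz \right),
\]
so that in the $k$–th summand only the $k$–th coordinate contour changes, from $\alpha_T$ to $\beta_T$, while the remaining $N-1$ coordinates are frozen on a mixed contour $\hat z_k$ lying (as an unordered product) in $\alpha^{N-k}\times\beta^{k-1}$; the order is immaterial since $f$ is holomorphic in each variable. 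Every product contour appearing here lands in $U$ because, by the tube hypothesis, a point with some coordinates on $\alpha=H(\cdot,0)$ and the rest on $\beta=H(\cdot,1)$ is of the form $(H(x_1,y_1),\dots,H(x_N,y_N))$. As all these contours are compact and $f$ is continuous, Fubini applies and lets me write the $k$–th summand as $\int_{\hat z_k}\bigl(\int_{\alpha_T}f_k(\cdot\,;\hat z_k) - \int_{\beta_T}f_k(\cdot\,;\hat z_k)\bigr)$ integrated against the frozen derivative factors.

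Next I would estimate the inner single–variable difference in two ways. By the tube hypothesis the slice $z\mapsto f_k(z;\hat z_k)$ is holomorphic on an open set containing $H([-T,T]\times[0,1])$, hence containing $\alpha_T$, $\beta_T$ and the homotopy between them, so the quantitative single–variable deformation bound proved above (the corollary immediately following Corollary~\ref{moving}) gives
\[
  \left| \int_{\alpha_T} f_k(\cdot\,;\hat z_k) - \int_{\beta_T} f_k(\cdot\,;\hat z_k) \right|
  \le M \sup_{y\in[0,1]}\left| f_k(H(-T,y);\hat z_k) - f_k(H(T,y);\hat z_k) \right|
  \le 2M \sup_{y\in[0,1]} \left| f_k(H(\pm T,y);\hat z_k)\right|,
\]
which tends to $0$ as $T\to\infty$ for each fixed $\hat z_k$ on the full lines, by the decay hypothesis used with the index $k$. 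At the same time the crude, $T$–independent bound
\[
  \left| \int_{\alpha_T} f_k(\cdot\,;\hat z_k) - \int_{\beta_T} f_k(\cdot\,;\hat z_k) \right|
  \le \int_{\alpha}\left| f_k(\cdot\,;\hat z_k)\right| + \int_{\beta}\left| f_k(\cdot\,;\hat z_k)\right| =: G_k(\hat z_k)
\]
holds, and by Tonelli the integral of $G_k$ over $\hat z_k$ against the derivative factors equals $\int_{\alpha^{N-k+1}\times\beta^{k-1}}|f| + \int_{\alpha^{N-k}\times\beta^{k}}|f|$, which is finite by the $\mathcal{L}^1$–hypotheses (indices $k-1$ and $k$). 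Dominated convergence then forces the $k$–th summand to $0$, and summing the finitely many summands completes the argument; when $N=1$ the sum has a single term and this is just the single–variable corollary together with the decay hypothesis.

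The one place that genuinely requires care — and the part I would be most careful to spell out — is the bookkeeping that the hypotheses restrict to the slices exactly as needed: that freezing $k-1$ coordinates on $\beta$ and $N-k$ on $\alpha$ yields precisely a point of $\alpha^{N-k}\times\beta^{k-1}$ to which the decay hypothesis of index $k$ applies, and that the two mixed–contour $\mathcal{L}^1$–assumptions producing the dominating function are exactly the $k-1$ and $k$ cases that were assumed. Everything else is routine: Fubini on compact contours to peel off one coordinate, the quantitative single–variable deformation corollary on that coordinate, and a single application of dominated convergence per coordinate.
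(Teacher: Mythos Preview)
Your proposal is correct and follows essentially the same route as the paper: telescope over the $N$ coordinates, peel off the moving coordinate with Fubini, bound the single-variable difference by the quantitative deformation corollary to get pointwise decay, and close with dominated convergence using the $\mathcal{L}^1(\alpha^{N-k+1}\times\beta^{k-1})$ and $\mathcal{L}^1(\alpha^{N-k}\times\beta^{k})$ hypotheses to supply the dominating function. The paper's proof is terser and leaves the identification of the dominating function implicit; your write-up makes this explicit, which is an improvement. One small point worth stating in the final version: when you pass from the outer integral over the truncated contour $\alpha_T^{N-k}\times\beta_T^{k-1}$ to the full contour $\alpha^{N-k}\times\beta^{k-1}$ (as both you and the paper implicitly do before invoking dominated convergence), the justification is simply that the integrand is nonnegative, so enlarging the domain only increases the bound.
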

\begin{proof}
We have 
$$\left|\int_{\alpha_T^N}f(z)dz-\int_{\beta_T^N}f(z)dz\right|\leq\sum_{k=1}^N\left|\int_{\alpha_T^{N-k+1}\times\beta_T^{k-1}}f(z)dz-\int_{\alpha_T^{N-k}\times\beta_T^k}f(z)dz\right|.$$ 
We will focus on just the $k$th summand. By Fubini--Tonelli, we can rearrange our integrals to find that 
\begin{align*}
\left|\int_{\alpha_T^{N-k+1}\times\beta_T^{k-1}}f(z)dz-\int_{\alpha_T^{N-k}\times\beta_T^k}f(z)dz\right|\leq\int_{\alpha^{N-k}\times\beta^{k-1}}\left|\int_{\alpha_T}f(z)dz_k-\int_{\beta_T}f(z)dz_k\right|d\hat z_k.  
\end{align*}
By dominated convergence, our assumptions about $f$ and $f_k$, and the previous corollary, this tends to 0 as $T\to\infty$. 
\end{proof}

\newpage 

\printbibliography[heading=bibliography]

\end{document}